\newcommand{\blue}[1]{\textcolor{blue}{#1}}
\newcommand{\red}[1]{\textcolor{red}{#1}}
\def\Pda{\Psi_{21}(\alpha)}
\newcommandx{\nantel}[2][1=]{\todo[linecolor=brown,backgroundcolor=red!25,bordercolor=blue,#1]{#2 ---Nantel}}
\newcommandx{\jc}[2][1=]{\todo[linecolor=purple,backgroundcolor=blue!25,bordercolor=red,#1]{#2 ---JC}}
\newcommandx{\john}[2][1=]{\todo[linecolor=purple,backgroundcolor=green!25,bordercolor=red,#1]{#2 ---John}}
\date{\today}
\theoremstyle{plain}
\newtheorem{theorem}{Theorem}[section]
\newtheorem{corollary}[theorem]{Corollary}
\newtheorem{lemma}[theorem]{Lemma}
\newtheorem{proposition}[theorem]{Proposition}
\newtheorem{conjecture}[theorem]{Conjecture}
\newtheorem{question}[theorem]{Question}
\theoremstyle{definition}
\newtheorem{example}[theorem]{Example} 
\newtheorem{remark}[theorem]{Remark} 
\newtheorem{definition}[theorem]{Definition} 
\theoremstyle{remark}
\newcommand{\xyinc}{\ar@{^{(}->}}
\def\field{\Bbbk}
\newcommand{\Ptwo}{\!\!\!\begin{tikzpicture}[scale=.3,baseline=-.05cm] \node (c) at (0,0) {$\scriptscriptstyle 1$}; \node (d) at (.5,0) {$\scriptscriptstyle 2$};  \end{tikzpicture}\!\!\! }
\newcommand{\Gtwo}{\!\!\! \begin{tikzpicture}[scale=.5,baseline=-.05cm] \node (c) at (0,0) {$\scriptscriptstyle 1$}; \node (d) at (.6,0) {$\scriptscriptstyle 2$}; \draw (.15,0) --  (.4,0);  \end{tikzpicture}\!\!\!}
\newcommand{\GGtwo}{\scriptstyle \bullet-\bullet}
\newcommand{\PF}{\mathcal{PF}}
\newcommand{\area}{\mathop{\mathrm{area}}}
\newcommand{\dinv}{\mathop{\mathrm{dinv}}}
\newcommand{\ides}{\mathop{\mathrm{ides}}}
\newcommand{\comp}{\mathop{\mathrm{comp}}}
\newcommand{\type}{\mathop{\mathrm{type}}}
 \author{Jean-Christophe Aval}\address[Aval]
{LaBRI, CNRS, Université de Bordeaux, FRANCE}
\email{aval@labri.fr}
\author{Nantel Bergeron}\address[Bergeron]
{Department of Mathematics and Statistics\\ York  University\\ To\-ron\-to, Ontario M3J 1P3\\ CANADA}
\email{bergeron@mathstat.yorku.ca}
\urladdr{http://www.math.yorku.ca/bergeron}
\thanks{With partial support of Bergeron's York University Research Chair and NSERC}
\author{John Machacek}\address[Machacek]
{Department of Mathematics and Statistics\\ York  University\\ To\-ron\-to, Ontario M3J 1P3\\ CANADA}
\email{machacek@yorku.ca}
\urladdr{}
\thanks{With support of York Science Fellow, York University}
\title[New Invariants for Permutations, Orders and Graphs]{New Invariants for Permutations, Orders and Graphs}
\keywords{Combinatorial Hopf algebra, combinatorial invariants, coloring, positivity} 
\subjclass[2010]{16T30; 05E05; 05E15; 05C15}
\begin{document}

\begin{abstract} We study the symmetric function and polynomial combinatorial invariants of Hopf algebras of permutations, posets and graphs.
We investigate their properties and the relations among them.
In particular, we show that the chromatic symmetric function and many other invariants have a property we call positively $h$-alternating.
This property of positively $h$-alternating leads to Schur positivity and $e$-positivity when applying the operator $\nabla$ at $q=1$.
We conclude by showing that the invariants we consider can be expressed as scheduling problems.
\end{abstract}

\maketitle

\section*{Introduction}

One of the key results in the theory of \emph{Combinatorial Hopf algebras (CHAs)} gives us a canonical way of constructing combinatorial invariants with values in the space $QSym$ of quasisymmetric functions (see~\cite{ABS}).
Let $\field$ be a field and let $H=\bigoplus_{n\ge 0} H_n$ be a CHA over $\field$ with \emph{character} (i.e. algebra morphism) $\zeta\colon H\to\field$.
We then have a unique Hopf morphism
  $$\Psi:H\to QSym$$
  such that $\zeta=\phi_1\circ\Psi$ where $\phi_1\big(f(x_1,x_2,\ldots)\big)=f(1,0,0,\ldots)$. Moreover, there is a Hopf
  morphism $\phi_t \colon QSym\to \field[t]$ given by 
   $\phi_t (M_{\alpha})={t \choose \ell(a)}$.
   Here $M_{\alpha}$ is the monomial quasisymmetric function indexed by an integer composition $\alpha=(\alpha_1,\alpha_2,\ldots,\alpha_\ell)$ and $\ell(\alpha)=\ell$ is the length of the composition.
 This Hopf morphism has the property that 
   $$\phi_t\big(f(x_1,x_2,\ldots)\big) \Big|_{t=1}=\phi_1(f)\,.$$
In particular,
   $$\phi_t\circ\Psi\Big|_{t=1}=\left( \phi_t\big|_{t=1}\right)\circ\Psi=\phi_1\circ\Psi=\zeta.$$
   
In the case when $H=\overline{\mathcal G}$, a Hopf algebra on isomorphism classes of graphs, one can define the character
  $$\zeta(G)=\begin{cases}
     1&\text{if $G$ has no edges,}\\
     0&\text{otherwise,}
  \end{cases}$$
which gives us, as shown in \cite[Example 4.5]{ABS}, that $\Psi(G)$ is the chromatic symmetric function~\cite{Sta95} and $\chi_G(t)=\phi_t\circ \Psi(G)$ is the chromatic polynomial of $G$.
Stanley's theorem on acyclic orientations~\cite{Sta1973} can be obtained pointing out that
the antipode of $\field[t]$ is given by $S\big(p(t)\big)=p(-t)$. 
Hence,
  $$\chi_G(-1)=S\circ\phi_t\circ \Psi(G)\Big|_{t=1} = \phi_t\circ \Psi\circ S(G)\Big|_{t=1}=\zeta\circ S(G).$$
Using the fact that the graph with no edges has coefficient $(-1)^{n}a(G)$ in $S(G)$ (see~\cite{Aguiar-Ardila,Benedetti-Sagan,Humpert-Martin}), where $n$ is the number of vertices of $G$ and $a(G)$ is the number of acyclic orientations of $G$, we see that $\chi_G(-1)=\zeta\circ S(G)=(-1)^{n}a(G)$.
This example with $H =\overline{ \mathcal G}$ is one instance of a general pattern where CHAs give a natural way of studying (quasi)symmetric function and polynomial invariants.
In this setting, a formula for the antipode can be thought of as a \emph{combinatorial reciprocity} result.

CHAs allow for the investigation of more general symmetric function and polynomial invariants.
One direction of generalization is to replace the Hopf algebra of graphs with a Hopf algebra of a more general class of objects containing graphs.
This gives an approach to studying a chromatic symmetric function of more general objects via CHAs.
This direction has been pursued for building sets~\cite{Gru2012}, simplicial complexes~\cite{BenedettiHallamMachacek}, and hypergraphs~\cite{Gru2016}.
For hypergraphs the antipode formula is more complicated than for graphs~\cite{BB2017, BBM2019} and so we do not obtain as nice of a reciprocity theorem.

Our focus will be on a different direction of generalization where invariants are obtained by changing the character $\zeta\colon H\to  \field$.
In addition to considering graphs, we will look at Hopf algebra structures on permutations and posets as well.
For the Hopf algebra structure along with choice of characters for permutations, posets, and graphs we have CHA morphisms
\[\{\text{Permutations}\} \to \{\text{Posets}\} \to \{\text{Graphs}\}\]
allowing us to studying symmetric function and polynomial invariants at different levels.

In Section~\ref{s:hopf} we will provide a brief review of CHAs and give the definitions of the CHAs on permutations, posets, and graphs which will be used in this paper.
This will leave us with many potential CHAs and corresponding invariants to study.
In Section~\ref{s:inv21} we will focus on a particular choice of character which leads to a symmetric function analogous to Stanley's chromatic symmetric function except we partition the vertex set into matchings rather than independent sets.
Proposition~\ref{prop:bond-21}  gives an expansion of this symmetric function in the power sum basis.
We also look at expansions in the basis of elementary symmetric functions.
In Section~\ref{s:nabla} we will focus on the usual chromatic symmetric and effect of applying $\nabla$.
Here $\nabla$ is the operator arising the study of Macdonald Polynomials and diagonal harmonics~\cite{nablaDef}.
We find in Corollary~\ref{cor:csf} that the specialization at $q=1$ of $\nabla$ applied to the chromatic symmetric function is Schur positive and in fact $e$-positive.
This $e$-positivity applies to all symmetric functions which ``positively alternate'' in the basis of homogenous symmetric functions, which is a property implied to positivity in the power sum basis after applying the involution $\omega$.
In Section~\ref{s:gen} we show that certain properties, including the previously mentioned Schur and $e$-positivity, are shared by many of the symmetric function invariants coming from the CHAs defined in this paper. We conclude by showing that the invariant we consider can be expressed as scheduling problems as in~\cite{sched, MachacekEJC}.

\section{Invariants in the Combinatorial Hopf Algebras $\mathcal S$, $\mathcal P$ and $\mathcal G$ }\label{s:hopf}

We review basic notions on Combinatorial Hopf algebras and illustrate definitions with  three classical examples. We encourage the reader to see \cite{ABS,AM:2010,BB2017} for a deeper study on this topic.

\subsection{Combinatorial Hopf Algebras (CHA)}\label{ss:CHA}
A graded connected Hopf algebra is a tuple $H=(H,m,u,\Delta,\epsilon,S)$ with the following properties 
\begin{enumerate}
\item $H=\bigoplus_{n\ge 0}H_n$ is a graded vector spaces where $\dim(H_n)<\infty$ and $\dim(H_0)=1$.
\item $m=\sum_{a,b} m_{a,b}$ and $u=\sum_n u_n$ are maps defined from
  $$m_{a,b}\colon H_a\otimes H_b\to H_{a+b}\quad \hbox{and} \quad u_n\colon \field\to H_n$$
  where $u_n=0$ for $n>0$, are such that $(H,m,u)$ is a graded algebra with multiplication $m$ and unit $u(1)$.
\item $\Delta=\sum_{a,b} \Delta_{a,b}$ and $\epsilon=\sum_n \epsilon_n$ are map defined from 
  $$\Delta_{a,b}\colon H_{a+b}\to H_a\otimes H_b \quad \hbox{and} \quad \epsilon_n\colon H_n\to\field $$
  where $\epsilon_n=0$ for $n>0$, are such that $(H,\Delta,\epsilon)$ is a graded coalgebra with comultiplication $\Delta$ and counit $\epsilon$.
\item The maps $\Delta$ and $\epsilon$ are algebra morphisms.
\item The map $S=\sum_n S_n$ defined from $S\colon H_n\to H_n$ is the antipode. That is the convolution inverse of the identity map $\hbox{Id}\colon H \to H$.
\end{enumerate}

For graded connected bialgebras (satisfying (1),(2),(3),(4) above) the existence of the antipode $S$ is guaranteed by Takeuchi's formula~\cite{Tak}:
For $x\in H_n$
\begin{equation}\label{eq:takeuchi}
 S_n(x)= \sum_{\alpha\models n} (-1)^{\ell(\alpha)} \mu_\alpha \Delta_{\alpha}(x)
\end{equation}
where the sum is over $\alpha=(a_1,a_2,\ldots,a_\ell)\models n$ compositions of $n$, the length if $\ell(\alpha)=\ell$ and
  $$m_\alpha\colon H_{a_1}\otimes H_{a_2}\otimes\cdots\otimes H_{a_\ell}\to H_n,$$
  $$\Delta_\alpha \colon H_n \to H_{a_1}\otimes H_{a_2}\otimes\cdots\otimes H_{a_\ell},$$
are maps defined recursively: 
 $$m_\alpha=\begin{cases} \mbox{Id} &\mbox{if } \ell(\alpha)=1, \\ m_{a_1,n-a_1}\circ \big(m_{(a_1)} \otimes m_{(a_2,\ldots,a_\ell)}\big)&\mbox{otherwise,} \end{cases} $$
 $$\Delta_\alpha=\begin{cases} \mbox{Id} &\mbox{if } \ell(\alpha)=1, \\ \big(\Delta_{(a_1)} \otimes \Delta_{(a_2,\ldots,a_\ell)}\big)\circ\Delta_{a_1,n-a_1} &\mbox{otherwise.} \end{cases} $$
From associativity, this definition is independent of how we actually define $m_\alpha$ and $\Delta_\alpha$.

A \emph{Combinatorial Hopf Algebra}, in the sense of \cite{ABS}, is a pair $(H,\zeta)$ where $H$ is a graded connected Hopf algebra and $\zeta\colon\field\to H$ is a \emph{character} (a multiplicative linear map). 
The most central combinatorial Hopf algebra in this sense is the pair $(QSym,\varphi_1)$ where $QSym=\bigoplus_{n\ge 0} QSym_n$ and $QSym_n$ is the $\field$-span of $\{ M_\alpha : \alpha\models n\}$.
Elements of $QSym$ are known as quasisymmetric functions.
Here $M_\alpha$ can be viewed as the following series
  $$M_\alpha (x_1,x_2,\ldots) = \sum_{i_1<i_2<\ldots<i_{\ell(\alpha)}} x_{i_1}^{a_1}x_{i_2}^{a_2}\ldots x_{i_\ell}^{a_\ell}$$
  in $\field[\![x_1,x_2,\ldots]\!]$.
The multiplication of quasisymmetric functions is the usual multiplication of series. We now define
  $$\Delta(M_\alpha)=\sum_{i=0}^{\ell(\alpha)}  M_{(a_1,\ldots,a_i)}\otimes M_{(a_{i+1},\ldots,a_\ell)}$$
and the counit $\epsilon(M_\alpha)=M_\alpha(0,0,\ldots)$. This gives us a structure of bialgebra on $QSym$ and hence makes $QSym$ a connected graded Hopf algebra.
The map $\varphi_1$ is given by the evaluation $\varphi_1(M_\alpha)=M_\alpha(1,0,0,\ldots)$.
The CHA $QSym$ contains the Hopf subalgebra $Sym$ of symmetric functions. The monomial symmetric function $m_\lambda$ is given by
  $$m_\lambda=\sum_{\alpha\in \lambda} M_\alpha$$
  where $\lambda$ is an equivalence class of compositions under the permutation of its entries. This constructs the usual monomial basis of $Sym$
  and gives us an inclusion $Sym\subset QSym$.
The following fundamental theorem of Combinatorial Hopf Algebra states that $(QSym,\varphi_1)$ is a terminal object.

\begin{theorem}[\cite{ABS}]\label{thm:ABS}
For any CHA $(H,\zeta)$ there is a unique graded morphism of Hopf algebra $\Psi_{\zeta}\colon H\to QSym$ such that $\zeta=\varphi_1\circ\Psi$.
Moreover, if $H$ is cocommutative then $\Psi\colon H\to Sym$.
\end{theorem}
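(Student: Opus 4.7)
The plan is to construct $\Psi_\zeta$ explicitly from the iterated coproduct and the character, then verify the universal property. For $h \in H_n$ and a composition $\alpha = (a_1, \ldots, a_\ell) \models n$, let $\zeta^{\otimes \ell} \colon H_{a_1} \otimes \cdots \otimes H_{a_\ell} \to \field$ be the $\ell$-fold tensor product of $\zeta$ (restricted to positive-degree pieces, since $\zeta$ is a character). Using the iterated coproduct $\Delta_\alpha$ defined recursively in the excerpt, I would set
\[
  \Psi_\zeta(h) \;=\; \sum_{n \ge 0}\sum_{\alpha \models n} \bigl(\zeta^{\otimes \ell(\alpha)} \circ \Delta_\alpha\bigr)(h)\, M_\alpha,
\]
where the inner sum is taken over compositions of $n = \deg(h)$. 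This is the natural guess: each monomial quasisymmetric function $M_\alpha$ is itself recovered from the character $\varphi_1$ of $QSym$ applied to pieces of its own coproduct, so mimicking this formula in $H$ is forced.

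First I would check that $\Psi_\zeta$ is a graded algebra morphism. This reduces to showing that for each composition $\alpha$, the functional $h \mapsto \zeta^{\otimes \ell(\alpha)} \circ \Delta_\alpha(h)$ behaves multiplicatively with respect to $m$ on $H$ and the product rule for $M_\alpha \cdot M_\beta$ in $QSym$ (the quasi-shuffle of $\alpha$ and $\beta$). This follows from the compatibility axiom (4): $\Delta$ is an algebra morphism, so the iterated coproduct of a product is the shuffle of the iterated coproducts, and $\zeta$ being multiplicative absorbs this into the quasi-shuffle expansion. Next I would verify that $\Psi_\zeta$ is a coalgebra morphism by matching coefficients: the coefficient of $M_{(a_1,\ldots,a_i)} \otimes M_{(a_{i+1},\ldots,a_\ell)}$ in $(\Psi_\zeta \otimes \Psi_\zeta)\Delta(h)$ equals $\zeta^{\otimes \ell} \circ \Delta_\alpha(h)$ by coassociativity, which is exactly the coefficient of $M_\alpha$ in $\Psi_\zeta(h)$ pulled through the coproduct of $QSym$. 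The compatibility $\varphi_1 \circ \Psi_\zeta = \zeta$ then holds because $\varphi_1(M_\alpha) = \delta_{\ell(\alpha) \le 1}$, so only the trivial composition $\alpha = (n)$ contributes, giving $\zeta \circ \Delta_{(n)}(h) = \zeta(h)$.

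For uniqueness, suppose $\Phi \colon H \to QSym$ is any graded Hopf morphism with $\varphi_1 \circ \Phi = \zeta$. Expand $\Phi(h) = \sum_\alpha c_\alpha(h)\, M_\alpha$. Applying $\varphi_1^{\otimes \ell(\alpha)}$ to $\Delta_\alpha^{QSym}\Phi(h)$ picks out $c_\alpha(h)$ because $\varphi_1^{\otimes \ell}\circ \Delta_\alpha^{QSym}(M_\beta) = \delta_{\alpha,\beta}$. But since $\Phi$ intertwines coproducts, this equals $\varphi_1^{\otimes \ell} \circ \Phi^{\otimes \ell}\circ \Delta_\alpha(h) = \zeta^{\otimes \ell}\circ \Delta_\alpha(h)$, forcing $\Phi = \Psi_\zeta$.

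Finally, for the cocommutative case I would observe that cocommutativity of $\Delta$ implies $\Delta_\alpha(h) = \Delta_{\sigma(\alpha)}(h)$ (up to the symmetry of the tensor factors) for any permutation $\sigma$ of the parts of $\alpha$. Thus the coefficient of $M_\alpha$ in $\Psi_\zeta(h)$ depends only on the underlying partition, so $\Psi_\zeta(h) = \sum_\lambda d_\lambda(h)\, m_\lambda \in Sym$ by the identity $m_\lambda = \sum_{\alpha \in \lambda} M_\alpha$. I expect the main obstacle to be the bookkeeping for the algebra morphism check: the quasi-shuffle product on monomial quasisymmetric functions must be matched term-by-term against the expansion arising from the compatibility of $\Delta$ with $m$ in $H$, and this is the combinatorially delicate part of the argument.
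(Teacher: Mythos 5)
The paper does not prove this theorem itself---it cites it from \cite{ABS} and merely recalls the explicit formula $\Psi_{\zeta}(h)=\sum_{\alpha\vDash n}\zeta_{\alpha}(h)M_{\alpha}$ with $\zeta_{\alpha}=\zeta^{\otimes\ell}\circ(\text{projection})\circ\Delta^{\ell-1}$. Your construction coincides exactly with that formula, and your verification (algebra/coalgebra compatibility via axiom (4) and coassociativity, the identity $\varphi_1^{\otimes\ell}\circ\Delta_{\alpha}^{QSym}(M_{\beta})=\delta_{\alpha,\beta}$ for uniqueness, and collapsing to $m_{\lambda}=\sum_{\alpha\in\lambda}M_{\alpha}$ in the cocommutative case) is the standard argument of \cite{ABS}, so this is essentially the same approach and it is correct.
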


It will be useful for us to recall the form of the morphism in the theorem.
Given a CHA $(H,\zeta)$ and $h \in H_n$ the morphism $\Psi_{\zeta}$ is given by
\[\Psi_{\zeta}(h) = \sum_{\alpha \vDash n} \zeta_{\alpha}(h) M_{\alpha}\]
where $\zeta_{\alpha}$ is the composition of functions
\[H \xrightarrow{\Delta^{\ell - 1}} H^{\otimes \ell} \xrightarrow{\text{projection}} H_{\alpha_1} \otimes \cdots \otimes H_{\alpha_{\ell}} \xrightarrow{\zeta^{\otimes \ell}} \field.\]
Intuitively one should think of the comultiplication $\Delta$ splitting the element $h$ while $\zeta$ selects a ``good'' splitting of $h$ and $M_\alpha$ records the grading.
\subsection{The Combinatorial Hopf Algebra of Permutation $\mathcal S$}\label{ss:perm}

Let ${\mathcal S}_n$ denote the $\field$-span of all permutation of $n$ written in one line notation.
An element $\sigma\in {\mathcal S}_n$ is given by a list $\sigma=\big(\sigma(1),\sigma(2),\ldots,\sigma(n)\big)$.
We sometimes omit all parenthesis and comas in the list describing a permutation.
For instance, if $n=3$,
\[
{\mathcal S}_3=\field \{123,\,132,\,213,\,231,\,312,\,321\}.
\]
The graded vector space ${\mathcal S}=\bigoplus_{n\ge 0} {\mathcal S}_n$ as a structure of CHA with the following operations.

By convention ${\mathcal S}_0=\field \{\emptyset\}$ where $\emptyset$ is the unique permutation of the empty set. 
Our unit is the map $u\colon\field \to {\mathcal S}$ defined by $u_0(1)=\emptyset$. The graded multiplication is given by
 $$ 
 \begin{array}{rcl}
 m_{a,b}\colon {\mathcal S}_a\otimes {\mathcal S}_b&\to&{\mathcal S}_{a+b}\\
 \alpha\otimes\beta &\mapsto & \alpha\cdot\beta^{\uparrow^a}
 \end{array}
 $$
where for $\alpha$ and $\beta$ two permutations, $\alpha\cdot\beta^{\uparrow^a}=(\alpha(1),\ldots,\alpha(a),\beta(1)+a,\ldots,\beta(b)+a)$
and the map $m_{a,b}$ is the linear extension of this operation. 

A \emph{set composition} of a finite set $I$ is a finite sequence $(A_1,\ldots,A_k)$
of disjoint non-empty subsets of $I$ whose union is $I$. In this situation, we write
$
(A_1,\ldots,A_k)\models I.
$
Let $(A_1,A_2)\models \{1,2,\ldots,n\}$. Given a permutation $\alpha\in{\mathcal S}_n$ we let $\alpha|_{A_i}$ to be the sequence 
  $$\alpha|_{A_i}=\big(\alpha(j): \alpha(j)\in A_i \hbox{ and } 1\le j\le n\big).$$
  That is we restrict the list $\alpha$ to the entries in $A_i$ only. Now there is a unique order preserving map $st\colon A_i\to \{1,2,\ldots ,|A_i|\}$,
  it is called the \emph{standardization map}. This map is	 unique such that for all $a,b\in A_i$, we have $a<b$ implies $st(a)<st(b)$.
  We can now standardize $st\big(\alpha|_{A_i}\big)=\big(st(\alpha(j)): \alpha(j)\in A_i \hbox{ and } 1\le j\le n\big)$. Technically this map depends on $A_i$, but for simplicity we abuse notation and denote by $st$ all such maps.
  The graded comultiplication of $\mathcal  S$ is now given by the map
 $$ 
 \begin{array}{rcl}
 \Delta_{a,b}\colon {\mathcal S}_{a+b}&\to&{\mathcal S}_{a}\otimes {\mathcal S}_b\\
 \alpha &\mapsto & \displaystyle \sum_{(A_1,A_2)\models \{1,2,\ldots,n\} \atop  |A_1|=a,\ |A_2|=b}  st\big(\alpha|_{A_1}\big)\otimes st\big(\alpha|_{A_2}\big).
 \end{array}
 $$
 If $a=0$, then $\Delta_{0,b} = \emptyset \otimes Id$ and similarly if $b=0$.
 The graded counit is defined by $\epsilon_0(\emptyset)=1$. With these operations, ${\mathcal S}$ is a graded connected bialgebra and hence a graded connected Hopf algebra.
 We remark that $\mathcal S$ is not commutative but is cocommutative.
 
 \begin{example} \label{ex:permops}
 Given $\alpha=31425$ and $\beta=2413$ we have 
 $$m_{5,4}(\alpha\otimes\beta)=\alpha\cdot\beta^{\uparrow^5}=314257968,$$
 $$\Delta_{1,3}(\beta)=1\otimes 312+ 1\otimes 213 + 1\otimes 132 + 1\otimes 231,$$
  $$\Delta_{2,2}(\beta)=2(12\otimes 12)+ 2(21\otimes 21) + 2(12\otimes 21)$$
 \end{example}
 
 Given a permutation $\alpha\in {\mathcal S}_n$, we say that $1\le i\le n-1$ is a \emph{global ascent} of $\alpha$ if for all $1\le r\le i<s\le n$ we have $\alpha(r)<\alpha(s)$.
 It is clear from the definition of multiplication that $i$ is a global ascent of alpha if and only if 
   $$\alpha = \alpha|_{\{1,2,\ldots,i\}}\cdot \alpha|_{\{i+1,\ldots,n\}}$$
  We say that a permutation is \emph{indecomposable} if $\alpha$ has no global ascents. It is not difficult to see from the characterization above that $\mathcal S$ is a free algebra
  and the generators are precisely the $\alpha$ with no global ascents. A complete list of free generators starts with
    $$1, 21, 231, 312, 321, \ldots $$
A character $\zeta\colon{\mathcal S}\to\field$ is completely determined by its values on the free generators. Here we will be interested in the simplest (non-zero) characters.
Namely characters $\zeta_1$, $\zeta_{21}$ or more generally $\zeta_{\gamma}$ for $\gamma$ with no global ascents which we  now define. The character $\zeta_1$ is defined by declaring $\zeta_1(1)=1$ and then setting $\zeta_1$ to zero on all other generators.
This defines $\zeta_1$ for all permutations as follows:
  $$\zeta_1(\alpha) =
    \begin{cases}
      1&\text{if } \alpha=12\cdots n  \text{ or } \alpha=\emptyset\,,\\
      0&\text{otherwise.}
    \end{cases}$$
Similarly, we let $\zeta_{21}(21)=1$ and set $\zeta_{21}$ to zero for all other generators.
This defines $\zeta_{21}$ for all permutations as follows:
  $$\zeta_{21}(\alpha) =
    \begin{cases}
      1&\text{if } \alpha=2143\cdots (2n)(2n-1)  \text{ or } \alpha=\emptyset\,,\\
      0&\text{otherwise.}
    \end{cases}$$
We find that $\zeta_{21}(\alpha)=0$ unless $\alpha\in{\mathcal S}_{2n}$ is a permutation on an even number of entries.
It is now clear that for any $\gamma$ which is a permutation with no global ascents, we can define a character with $\zeta_\gamma(\gamma)=1$ and set $\zeta_{\gamma}$ to zero for all other generators.

For each generator $\gamma$ we have a CHA given by $({\mathcal S},\zeta_\gamma)$. From Theorem~\ref{thm:ABS}, this will construct some combinatorial invariants
given by a Hopf morphism
$\Psi_\gamma\colon {\mathcal S}\to Sym$. As we will see below, for a permutation $\alpha$, the symmetric function $\Psi_1(\alpha)$ is well known as it is Stanley chromatic symmetric function for a certain graph (constructed from $\alpha$). The study of the more general invariants $\Psi_\gamma$ was proposed in~\cite{BB2017}.

\subsection{The Combinatorial Hopf Algebra of posets $\mathcal P$}\label{ss:poset} 

Let ${\mathcal P}_n$ denote the $\field$-span of all partial orders (posets) on the set $[n]=\{1,2,\ldots, n\}$. 
An element $P\in   {\mathcal P}_n$ is given by a transitive, antisymmetric, reflexive relation $<_P$ on $[n]$.
We represent $P$ by its Hasse diagram showing the cover relations.
For instance, if $n=3$,
$$ \begin{array}{c}
{\mathcal P}_3 = \field\Big\{ 
\begin{tikzpicture}[scale=.5,baseline=.3cm]
	\node (a) at (0,0) {$\scriptscriptstyle 1$}; \node (b) at (0,1) {$\scriptscriptstyle 2$}; \node (c) at (0,2) {$\scriptscriptstyle 3$};
	\draw (a) --  (b);  	\draw (b) --  (c);  \end{tikzpicture} ,
\begin{tikzpicture}[scale=.5,baseline=.3cm]
	\node (a) at (0,0) {$\scriptscriptstyle 1$}; \node (b) at (0,1) {$\scriptscriptstyle 3$}; \node (c) at (0,2) {$\scriptscriptstyle 2$};
	\draw (a) --  (b);  	\draw (b) --  (c);  \end{tikzpicture} ,
\begin{tikzpicture}[scale=.5,baseline=.3cm]
	\node (a) at (0,0) {$\scriptscriptstyle 2$}; \node (b) at (0,1) {$\scriptscriptstyle 1$}; \node (c) at (0,2) {$\scriptscriptstyle 3$};
	\draw (a) --  (b);  	\draw (b) --  (c);  \end{tikzpicture} ,
\begin{tikzpicture}[scale=.5,baseline=.3cm]
	\node (a) at (0,0) {$\scriptscriptstyle 2$}; \node (b) at (0,1) {$\scriptscriptstyle 3$}; \node (c) at (0,2) {$\scriptscriptstyle 1$};
	\draw (a) --  (b);  	\draw (b) --  (c);  \end{tikzpicture} ,
\begin{tikzpicture}[scale=.5,baseline=.3cm]
	\node (a) at (0,0) {$\scriptscriptstyle 3$}; \node (b) at (0,1) {$\scriptscriptstyle 1$}; \node (c) at (0,2) {$\scriptscriptstyle 2$};
	\draw (a) --  (b);  	\draw (b) --  (c);  \end{tikzpicture} ,
\begin{tikzpicture}[scale=.5,baseline=.3cm]
	\node (a) at (0,0) {$\scriptscriptstyle 3$}; \node (b) at (0,1) {$\scriptscriptstyle 2$}; \node (c) at (0,2) {$\scriptscriptstyle 1$};
	\draw (a) --  (b);  	\draw (b) --  (c);  \end{tikzpicture} ,
\begin{tikzpicture}[scale=.5,baseline=.3cm]
	\node (a) at (0,0) {$\scriptscriptstyle 1$}; \node (b) at (-.3,1) {$\scriptscriptstyle 2$}; \node (c) at (.3,1) {$\scriptscriptstyle 3$};
	\draw (a) --  (b);  	\draw (a) --  (c);  \end{tikzpicture} ,
\begin{tikzpicture}[scale=.5,baseline=.3cm]
	\node (a) at (0,0) {$\scriptscriptstyle 2$}; \node (b) at (-.3,1) {$\scriptscriptstyle 1$}; \node (c) at (.3,1) {$\scriptscriptstyle 3$};
	\draw (a) --  (b);  	\draw (a) --  (c);  \end{tikzpicture} ,
\begin{tikzpicture}[scale=.5,baseline=.3cm]
	\node (a) at (0,0) {$\scriptscriptstyle 3$}; \node (b) at (-.3,1) {$\scriptscriptstyle 1$}; \node (c) at (.3,1) {$\scriptscriptstyle 2$};
	\draw (a) --  (b);  	\draw (a) --  (c);  \end{tikzpicture} ,
\begin{tikzpicture}[scale=.5,baseline=.3cm]
	\node (a) at (0,1) {$\scriptscriptstyle 1$}; \node (b) at (-.3,0) {$\scriptscriptstyle 2$}; \node (c) at (.3,0) {$\scriptscriptstyle 3$};
	\draw (a) --  (b);  	\draw (a) --  (c);  \end{tikzpicture} ,
\begin{tikzpicture}[scale=.5,baseline=.3cm]
	\node (a) at (0,1) {$\scriptscriptstyle 2$}; \node (b) at (-.3,0) {$\scriptscriptstyle 1$}; \node (c) at (.3,0) {$\scriptscriptstyle 3$};
	\draw (a) --  (b);  	\draw (a) --  (c);  \end{tikzpicture} ,
\begin{tikzpicture}[scale=.5,baseline=.3cm]
	\node (a) at (0,1) {$\scriptscriptstyle 3$}; \node (b) at (-.3,0) {$\scriptscriptstyle 1$}; \node (c) at (.3,0) {$\scriptscriptstyle 2$};
	\draw (a) --  (b);  	\draw (a) --  (c);  \end{tikzpicture} ,\\
\begin{tikzpicture}[scale=.5,baseline=.3cm]
	\node (a) at (0,0) {$\scriptscriptstyle 1$}; \node (b) at (.6,0) {$\scriptscriptstyle 2$}; \node (c) at (.6,1) {$\scriptscriptstyle 3$};
	\draw (b) --  (c);  \end{tikzpicture} ,
\begin{tikzpicture}[scale=.5,baseline=.3cm]
	\node (a) at (0,0) {$\scriptscriptstyle 1$}; \node (b) at (.6,0) {$\scriptscriptstyle 3$}; \node (c) at (.6,1) {$\scriptscriptstyle 2$};
	\draw (b) --  (c);  \end{tikzpicture} ,
\begin{tikzpicture}[scale=.5,baseline=.3cm]
	\node (a) at (0,0) {$\scriptscriptstyle 2$}; \node (b) at (.6,0) {$\scriptscriptstyle 1$}; \node (c) at (.6,1) {$\scriptscriptstyle 3$};
	\draw (b) --  (c);  \end{tikzpicture} ,
\begin{tikzpicture}[scale=.5,baseline=.3cm]
	\node (a) at (0,0) {$\scriptscriptstyle 2$}; \node (b) at (.6,0) {$\scriptscriptstyle 3$}; \node (c) at (.6,1) {$\scriptscriptstyle 1$};
	\draw (b) --  (c);  \end{tikzpicture} ,
\begin{tikzpicture}[scale=.5,baseline=.3cm]
	\node (a) at (0,0) {$\scriptscriptstyle 3$}; \node (b) at (.6,0) {$\scriptscriptstyle 1$}; \node (c) at (.6,1) {$\scriptscriptstyle 2$};
	\draw (b) --  (c);  \end{tikzpicture} ,
\begin{tikzpicture}[scale=.5,baseline=.3cm]
	\node (a) at (0,0) {$\scriptscriptstyle 3$}; \node (b) at (.6,0) {$\scriptscriptstyle 2$}; \node (c) at (.6,1) {$\scriptscriptstyle 1$};
	\draw (b) --  (c);  \end{tikzpicture} ,
\begin{tikzpicture}[scale=.5,baseline=.3cm]
	\node (a) at (0,0) {$\scriptscriptstyle 1$}; \node (b) at (.6,0) {$\scriptscriptstyle 2$}; \node (c) at (1.2,0) {$\scriptscriptstyle 3$}; \end{tikzpicture}
\Big\}\,.
\end{array}$$
The graded vector space ${\mathcal P}=\bigoplus_{n\ge 0} {\mathcal P}_n$ has a structure of Hopf algebra with operations we will now describe.

By convention ${\mathcal P}_0=\field \{\emptyset\}$ where $\emptyset$ is the unique poset of the empty set. 
The unit is the map $u\colon\field \to {\mathcal P}$ defined by $u_0(1)=\emptyset$. The graded multiplication is given by
 $$ 
 \begin{array}{rcl}
 m_{a,b}\colon {\mathcal P}_a\otimes {\mathcal P}_b&\to&{\mathcal P}_{a+b}\\
 P\otimes Q &\mapsto & P < Q^{\uparrow^a}
 \end{array}
 $$
where for the poset $Q$ on $\{1,2,\ldots , b\}$, the poset $Q^{\uparrow^a}$ on $\{a+1, a+2,\ldots,a+b\}$ is defined by the relation satisfying  
$(a+x)<_{Q^{\uparrow^a}}(a+y)$ if and only if $x<_{Q} y$. For two posets $P\in {\mathcal P}_a$ and $Q\in {\mathcal P}_b$, the poset $P< Q^{\uparrow^a} \in {\mathcal P}_{a+b}$ is the relation $<_P$ on 
$\{1,2,\ldots,a\}$, $<_{Q^{\uparrow^a}}$ on $\{a+1,a+2,\ldots,a+b\}$ and $i<j$ always when $i\le a<j$.
The map $m_{a,b}$ is the linear extension of this operation. 

  The graded comultiplication of $\mathcal  P$ is  given by the map
 $$ 
 \begin{array}{rcl}
 \Delta_{a,b}\colon {\mathcal P}_{a+b}&\to&{\mathcal P}_{a}\otimes {\mathcal P}_b\\
P &\mapsto & \displaystyle \sum_{(A_1,A_2)\models \{1,2,\ldots,n\} \atop  |A_1|=a,\ |A_2|=b}  st\big(P|_{A_1}\big)\otimes st\big(P|_{A_2}\big).
 \end{array}
 $$
 Where $P|_{A_i}$ is the restriction of the relation $<_P$ on the subset $A_i\subseteq \{1,2,\ldots,a+b\}$. If $a=0$ or $b=0$, then we use the same convention as for permutations.
 The graded counit is defined by $\epsilon_0(\emptyset)=1$. With these operations, ${\mathcal P}$ is a graded connected bialgebra and hence a graded  connected Hopf algebra.
Note that $\mathcal P$ is not commutative but it is cocommutative.
 
 \begin{example} \label{ex:posetops}
 Given 
 $P=\begin{tikzpicture}[scale=.5,baseline=.1cm]
	\node (a) at (0,1) {$\scriptscriptstyle 2$}; \node (b) at (0,0) {$\scriptscriptstyle 1$}; \node (c) at (.6,0) {$\scriptscriptstyle 3$};
	\draw (a) --  (b);  \end{tikzpicture}$ 
and 
$Q=\begin{tikzpicture}[scale=.5,baseline=.1cm]
	\node (a) at (0,1) {$\scriptscriptstyle 1$}; \node (b) at (0,0) {$\scriptscriptstyle 2$}; \node (c) at (.6,0) {$\scriptscriptstyle 3$}; \node (d) at (.6,1) {$\scriptscriptstyle 4$}; 
	\draw (a) --  (b);  \draw (a) --  (c);  	\draw (d) --  (c);  \end{tikzpicture}$
we have 
$$m_{3,4}(P\otimes Q)= P< Q^{\uparrow^3} =
\begin{tikzpicture}[scale=.5,baseline=.1cm]
	\node (aa) at (0,1) {$\scriptscriptstyle 2$}; \node (bb) at (0,0) {$\scriptscriptstyle 1$}; \node (cc) at (1.3,.3) {$\scriptscriptstyle 3$};
	\draw (aa) --  (bb);  
	\node (a) at (0,3) {$\scriptscriptstyle 4$}; \node (b) at (0,2) {$\scriptscriptstyle 5$}; \node (c) at (.6,2) {$\scriptscriptstyle 6$}; \node (d) at (.6,3) {$\scriptscriptstyle 7$}; 
	\draw (a) --  (b);  \draw (a) --  (c);  	\draw (d) --  (c);  
	\draw (0,1.2) --  (0,1.8);  \draw (0,1.2) --  (.6,1.8);  	\draw (1.3,.5) --  (0,1.8);  	\draw (1.3,.5) --  (.6,1.8);  
	\end{tikzpicture},$$
 $$\Delta_{1,3}(Q)=
     {\scriptscriptstyle 1}\otimes \begin{tikzpicture}[scale=.5,baseline=.1cm]
	 \node (b) at (0,0) {$\scriptscriptstyle 1$}; \node (c) at (.6,0) {$\scriptscriptstyle 2$}; \node (d) at (.6,1) {$\scriptscriptstyle 3$}; 
	\draw (d) --  (c);  \end{tikzpicture}
  + {\scriptscriptstyle 1}\otimes \begin{tikzpicture}[scale=.5,baseline=.1cm]
	\node (a) at (0,1) {$\scriptscriptstyle 1$};  \node (c) at (.6,0) {$\scriptscriptstyle 2$}; \node (d) at (.6,1) {$\scriptscriptstyle 3$}; 
	 \draw (a) --  (c);  	\draw (d) --  (c);  \end{tikzpicture}
  + {\scriptscriptstyle 1}\otimes \begin{tikzpicture}[scale=.5,baseline=.1cm]
	\node (a) at (0,1) {$\scriptscriptstyle 1$}; \node (b) at (0,0) {$\scriptscriptstyle 2$};  \node (d) at (.6,1) {$\scriptscriptstyle 3$}; 
	\draw (a) --  (b);   \end{tikzpicture}
  + {\scriptscriptstyle 1}\otimes \begin{tikzpicture}[scale=.5,baseline=.1cm]
	\node (a) at (0,1) {$\scriptscriptstyle 1$}; \node (b) at (0,0) {$\scriptscriptstyle 2$}; \node (c) at (.6,0) {$\scriptscriptstyle 3$}; 
	\draw (a) --  (b);  \draw (a) --  (c);  	 \end{tikzpicture}$$
  $$\Delta_{2,2}(Q)=
       \begin{tikzpicture}[scale=.5,baseline=.1cm] \node (a) at (0,1) {$\scriptscriptstyle 1$}; \node (b) at (0,0) {$\scriptscriptstyle 2$}; \draw (a) --  (b);  \end{tikzpicture} 
       \otimes	
       \begin{tikzpicture}[scale=.5,baseline=.1cm]  \node (c) at (0,0) {$\scriptscriptstyle 1$}; \node (d) at (0,1) {$\scriptscriptstyle 2$};  \draw (d) --  (c);  \end{tikzpicture}
 +   \begin{tikzpicture}[scale=.5,baseline=.1cm] \node (a) at (0,1) {$\scriptscriptstyle 1$}; \node (c) at (0,0) {$\scriptscriptstyle 2$}; \draw (a) --  (c);   \end{tikzpicture} 
      \otimes {\scriptscriptstyle 1\, 2} 
 +    2( {\scriptscriptstyle 1\, 2} \otimes {\scriptscriptstyle 1\, 2} )
 +     {\scriptscriptstyle 1\, 2} \otimes
	\begin{tikzpicture}[scale=.5,baseline=.1cm] \node (a) at (0,1) {$\scriptscriptstyle 1$}; \node (c) at (0,0) {$\scriptscriptstyle 2$};  \draw (a) --  (c);   \end{tikzpicture}
 +     \begin{tikzpicture}[scale=.5,baseline=.1cm] \node (c) at (0,0) {$\scriptscriptstyle 1$}; \node (d) at (0,1) {$\scriptscriptstyle 2$}; \draw (d) --  (c);  \end{tikzpicture}
 	\otimes
	\begin{tikzpicture}[scale=.5,baseline=.1cm] \node (a) at (0,1) {$\scriptscriptstyle 1$}; \node (b) at (0,0) {$\scriptscriptstyle 2$};  \draw (a) --  (b);  \end{tikzpicture}
  $$
 \end{example}

Given a poset $P\in {\mathcal P}_n$, we say that $1\le i\le n-1$ is a \emph{global split} of $P$ if  
   $$P = P|_{\{1,2,\ldots,i\}} < P|_{\{i+1,\ldots,n\}}$$
  We say that a poset is \emph{indecomposable} if $P$ has no global splits. It is not difficult to see from the characterization above that $\mathcal P$ is a free algebra
  and that the generators are precisely the posets with no global splits. A complete list of free generators start with
    $${\scriptscriptstyle 1}, 
    \begin{tikzpicture}[scale=.5,baseline=-.05cm] \node (c) at (0,0) {$\scriptscriptstyle 1$}; \node (d) at (.5,0) {$\scriptscriptstyle 2$};  \end{tikzpicture},
    \begin{tikzpicture}[scale=.5,baseline=.3cm] \node (c) at (0,0) {$\scriptscriptstyle 2$}; \node (d) at (0,1) {$\scriptscriptstyle 1$}; \draw (d) --  (c);  \end{tikzpicture},
    \begin{tikzpicture}[scale=.5,baseline=-.05cm]
	\node (a) at (0,0) {$\scriptscriptstyle 1$}; \node (b) at (.5,0) {$\scriptscriptstyle 2$}; \node (c) at (1,0) {$\scriptscriptstyle 3$};
	  \end{tikzpicture} ,
    \begin{tikzpicture}[scale=.5,baseline=.3cm]
	\node (a) at (0,0) {$\scriptscriptstyle 2$}; \node (b) at (-.3,1) {$\scriptscriptstyle 1$}; \node (c) at (.3,1) {$\scriptscriptstyle 3$};
	\draw (a) --  (b);  	\draw (a) --  (c);  \end{tikzpicture} ,
    \begin{tikzpicture}[scale=.5,baseline=.3cm]
	\node (a) at (0,0) {$\scriptscriptstyle 3$}; \node (b) at (-.3,1) {$\scriptscriptstyle 1$}; \node (c) at (.3,1) {$\scriptscriptstyle 2$};
	\draw (a) --  (b);  	\draw (a) --  (c);  \end{tikzpicture} ,
    \begin{tikzpicture}[scale=.5,baseline=.3cm]
	\node (a) at (0,1) {$\scriptscriptstyle 1$}; \node (b) at (-.3,0) {$\scriptscriptstyle 2$}; \node (c) at (.3,0) {$\scriptscriptstyle 3$};
	\draw (a) --  (b);  	\draw (a) --  (c);  \end{tikzpicture} ,
\begin{tikzpicture}[scale=.5,baseline=.3cm]
	\node (a) at (0,1) {$\scriptscriptstyle 2$}; \node (b) at (-.3,0) {$\scriptscriptstyle 1$}; \node (c) at (.3,0) {$\scriptscriptstyle 3$};
	\draw (a) --  (b);  	\draw (a) --  (c);  \end{tikzpicture} ,
\begin{tikzpicture}[scale=.5,baseline=.3cm]
	\node (a) at (0,0) {$\scriptscriptstyle 1$}; \node (b) at (.6,0) {$\scriptscriptstyle 2$}; \node (c) at (.6,1) {$\scriptscriptstyle 3$};
	\draw (b) --  (c);  \end{tikzpicture} ,
\begin{tikzpicture}[scale=.5,baseline=.3cm]
	\node (a) at (0,0) {$\scriptscriptstyle 1$}; \node (b) at (.6,0) {$\scriptscriptstyle 3$}; \node (c) at (.6,1) {$\scriptscriptstyle 2$};
	\draw (b) --  (c);  \end{tikzpicture} ,
\begin{tikzpicture}[scale=.5,baseline=.3cm]
	\node (a) at (0,0) {$\scriptscriptstyle 2$}; \node (b) at (.6,0) {$\scriptscriptstyle 1$}; \node (c) at (.6,1) {$\scriptscriptstyle 3$};
	\draw (b) --  (c);  \end{tikzpicture} ,
\begin{tikzpicture}[scale=.5,baseline=.3cm]
	\node (a) at (0,0) {$\scriptscriptstyle 2$}; \node (b) at (.6,0) {$\scriptscriptstyle 3$}; \node (c) at (.6,1) {$\scriptscriptstyle 1$};
	\draw (b) --  (c);  \end{tikzpicture} ,
\begin{tikzpicture}[scale=.5,baseline=.3cm]
	\node (a) at (0,0) {$\scriptscriptstyle 3$}; \node (b) at (.6,0) {$\scriptscriptstyle 1$}; \node (c) at (.6,1) {$\scriptscriptstyle 2$};
	\draw (b) --  (c);  \end{tikzpicture} ,
\begin{tikzpicture}[scale=.5,baseline=.3cm]
	\node (a) at (0,0) {$\scriptscriptstyle 3$}; \node (b) at (.6,0) {$\scriptscriptstyle 2$}; \node (c) at (.6,1) {$\scriptscriptstyle 1$};
	\draw (b) --  (c);  \end{tikzpicture} ,
    \begin{tikzpicture}[scale=.5,baseline=.3cm]
	\node (a) at (0,0) {$\scriptscriptstyle 2$}; \node (b) at (0,1) {$\scriptscriptstyle 3$}; \node (c) at (0,2) {$\scriptscriptstyle 1$};
	\draw (a) --  (b);  	\draw (b) --  (c);  \end{tikzpicture} ,
    \begin{tikzpicture}[scale=.5,baseline=.3cm]
	\node (a) at (0,0) {$\scriptscriptstyle 3$}; \node (b) at (0,1) {$\scriptscriptstyle 1$}; \node (c) at (0,2) {$\scriptscriptstyle 2$};
	\draw (a) --  (b);  	\draw (b) --  (c);  \end{tikzpicture} ,
    \begin{tikzpicture}[scale=.5,baseline=.3cm]
	\node (a) at (0,0) {$\scriptscriptstyle 3$}; \node (b) at (0,1) {$\scriptscriptstyle 2$}; \node (c) at (0,2) {$\scriptscriptstyle 1$};
	\draw (a) --  (b);  	\draw (b) --  (c);  \end{tikzpicture} ,
     \ldots $$
As before, a character $\zeta\colon{\mathcal P}\to\field$ is completely determined by its values on the free generators. We consider a character $\zeta_{Q}$ for each $Q$ with no global splits. We define a character by $\zeta_1(1)=1$ and setting $\zeta_1$ to zero for all other generators.
This defines $\zeta_1$ for all posets as follows:
  $$\zeta_1(P) =
    \begin{cases}
      1&\text{if } P={\scriptstyle 1<2< \cdots<n } \text{ or } P=\emptyset\,,\\
      0&\text{otherwise.}
    \end{cases}$$
Similarly, we define $\zeta_{\Ptwo}(\begin{tikzpicture}[scale=.5,baseline=-.05cm] \node (c) at (0,0) {$\scriptscriptstyle 1$}; \node (d) at (.5,0) {$\scriptscriptstyle 2$}; \end{tikzpicture})=1$ and  set $\zeta_{\Ptwo}$ to zero for all other generators.
This defines $\zeta_{\Ptwo}$ for all permutations as follows:
  $$\zeta_{\Ptwo}(P) =
    \begin{cases}
      1&\text{if } P=12<34<\cdots<(2n-1)(2n)  \text{ or } P=\emptyset\,,\\
      0&\text{otherwise.}
    \end{cases}$$
It follows that $\zeta_{\Ptwo}(P)=0$ unless $P\in{\mathcal P}_{2n}$ is a poset on an even number of entries.

For each generator $\gamma$ we have a CHA given by $({\mathcal P},\zeta_\gamma)$. From Theorem~\ref{thm:ABS}, this will construct some combinatorial invariants
given by a Hopf morphism
$\Psi_\gamma\colon {\mathcal P}\to Sym$. The fact that ${\mathcal P}$ is cocommutative gives us that the image of $\Psi_\gamma$ is inside $Sym$.

There is an embbeding of the Hopf algebra of permutation $\mathcal S$ inside $\mathcal P$. Given a permutation $\alpha \in {\mathcal S}_n$ we construct a poset $P_\alpha\in {\mathcal P}_n$ 
with the order $<_\alpha$ defined as follows:
  $$i<_\alpha j \qquad\iff\qquad \big( i<j \quad \text{and} \quad \alpha(i)<\alpha(j) \big).$$
\begin{example} For $n=3$:
$$P_{123}=\begin{tikzpicture}[scale=.5,baseline=.3cm]
	\node (a) at (0,0) {$\scriptscriptstyle 1$}; \node (b) at (0,1) {$\scriptscriptstyle 2$}; \node (c) at (0,2) {$\scriptscriptstyle 3$};
	\draw (a) --  (b);  	\draw (b) --  (c);  \end{tikzpicture} ,
\quad P_{132}=\begin{tikzpicture}[scale=.5,baseline=.15cm]
	\node (a) at (0,0) {$\scriptscriptstyle 1$}; \node (b) at (-.3,1) {$\scriptscriptstyle 2$}; \node (c) at (.3,1) {$\scriptscriptstyle 3$};
	\draw (a) --  (b);  	\draw (a) --  (c);  \end{tikzpicture} ,
\quad P_{213}=\begin{tikzpicture}[scale=.5,baseline=.15cm]
	\node (a) at (0,1) {$\scriptscriptstyle 3$}; \node (b) at (-.3,0) {$\scriptscriptstyle 1$}; \node (c) at (.3,0) {$\scriptscriptstyle 2$};
	\draw (a) --  (b);  	\draw (a) --  (c);  \end{tikzpicture} ,\\
\quad P_{231}=\begin{tikzpicture}[scale=.5,baseline=.15cm]
	\node (a) at (0,0) {$\scriptscriptstyle 1$}; \node (b) at (.6,0) {$\scriptscriptstyle 2$}; \node (c) at (.6,1) {$\scriptscriptstyle 3$};
	\draw (b) --  (c);  \end{tikzpicture} ,
\quad P_{312}=\begin{tikzpicture}[scale=.5,baseline=.15cm]
	\node (a) at (0,0) {$\scriptscriptstyle 3$}; \node (b) at (.6,0) {$\scriptscriptstyle 1$}; \node (c) at (.6,1) {$\scriptscriptstyle 2$};
	\draw (b) --  (c);  \end{tikzpicture} ,
\quad P_{321}=\begin{tikzpicture}[scale=.5,baseline=-.05cm]
	\node (a) at (0,0) {$\scriptscriptstyle 1$}; \node (b) at (.6,0) {$\scriptscriptstyle 2$}; \node (c) at (1.2,0) {$\scriptscriptstyle 3$}; \end{tikzpicture}
	$$
\end{example}
Extending linearly the map $\alpha \mapsto P_\alpha$ defines a graded Hopf embeding ${\mathcal S}\hookrightarrow {\mathcal P}$. Indeed, it is straightforward to check that
  $$P_{m_{a,b}(\alpha\otimes\beta)} =m_{a,b}(P_\alpha\otimes P_\beta)\qquad\text{and}\qquad P_{\Delta_{a,b}(\alpha)} =\Delta_{a,b}(P_\alpha).$$
Now if we have an indecomposable permutation $\gamma$, then $P_\gamma$ is an indecomposable poset. Furthermore, for $\gamma$ indecomposable, the character $\zeta_\gamma$ and 
$\zeta_{P_\gamma}$ are related as follows:
  $$ \zeta_\alpha(\beta)=\zeta_{P_\alpha}(P_\beta).$$
Hence, the unicity of the morphism in Theorem~\ref{thm:ABS}  gives us that $\Psi_\gamma(\beta)=\Psi_{P_\gamma}(P_\beta)$. We summarize this in the following Lemma.
\begin{lemma} Let $\varphi\colon {\mathcal S}\hookrightarrow {\mathcal P}$ be the Hopf morphism defined by $\varphi(\beta)=P_\beta$. For $\gamma$ an indecomposable permutation of $\mathcal S$,
we have that
\begin{enumerate}
\item[1.] $\varphi(\gamma)$ is an indecomposable poset of $\mathcal P$
\item[2.] The morphisms $\Psi_\gamma\colon{\mathcal S}\to Sym$ and $\Psi_{\varphi(\gamma)}\colon{\mathcal P}\to Sym$ satisfies $\Psi_\gamma=\Psi_{\varphi(\gamma)}\circ\varphi$.
\end{enumerate}
\end{lemma}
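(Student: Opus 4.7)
The plan is to prove the two parts largely independently, then glue them using the universal property in Theorem~\ref{thm:ABS}.

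For part~(1), I would show directly that the notions of global ascent (for $\gamma$) and global split (for $P_\gamma$) correspond through $\varphi$. Given $\gamma \in \mathcal{S}_n$, the key observation is: an index $1 \le i \le n-1$ is a global ascent of $\gamma$ iff it is a global split of $P_\gamma$. Indeed, $i$ is a global split of $P_\gamma$ precisely when $r <_\gamma s$ for every $r \le i < s$; by the definition $r <_\gamma s \iff (r<s \text{ and } \gamma(r)<\gamma(s))$, and since $r<s$ already holds, this reduces to $\gamma(r) < \gamma(s)$ for all such $r,s$, which is the definition of $i$ being a global ascent. Thus $\gamma$ indecomposable implies $P_\gamma$ indecomposable.

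For part~(2), the paper already grants that $\varphi$ is a graded Hopf morphism, so $\Psi_{\varphi(\gamma)} \circ \varphi$ is a Hopf morphism $\mathcal{S} \to Sym \subseteq QSym$. By the uniqueness in Theorem~\ref{thm:ABS}, it suffices to verify the character identity
\[
\zeta_\gamma \;=\; \varphi_1 \circ \Psi_{\varphi(\gamma)} \circ \varphi \;=\; \zeta_{\varphi(\gamma)} \circ \varphi,
\]
i.e. $\zeta_\gamma(\beta) = \zeta_{P_\gamma}(P_\beta)$ for every permutation $\beta$. Since both $\zeta_\gamma$ and $\zeta_{P_\gamma}$ are characters and both $\mathcal{S}$ and $\mathcal{P}$ are free algebras on their indecomposable objects, both sides are multiplicative with respect to the free-product decomposition. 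Write the unique factorization $\beta = \beta^{(1)} \cdot \beta^{(2)}\cdots \beta^{(k)}$ of $\beta$ into indecomposable permutations. Because $\varphi$ is an algebra morphism, $P_\beta = P_{\beta^{(1)}} \cdot P_{\beta^{(2)}} \cdots P_{\beta^{(k)}}$; by part~(1) each $P_{\beta^{(i)}}$ is indecomposable, and by uniqueness of factorization into indecomposables in $\mathcal{P}$ this is the free-product decomposition of $P_\beta$. Therefore
\[
\zeta_{P_\gamma}(P_\beta) \;=\; \prod_{i=1}^{k} \zeta_{P_\gamma}\!\left(P_{\beta^{(i)}}\right), \qquad \zeta_\gamma(\beta) \;=\; \prod_{i=1}^{k} \zeta_\gamma(\beta^{(i)}).
\]
It remains to compare the factors. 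Using the definitions of $\zeta_\gamma$ and $\zeta_{P_\gamma}$ on generators, together with the injectivity of $\alpha \mapsto P_\alpha$, one checks that $\zeta_{P_\gamma}(P_{\beta^{(i)}}) = [P_{\beta^{(i)}} = P_\gamma] = [\beta^{(i)} = \gamma] = \zeta_\gamma(\beta^{(i)})$. Multiplying, the two products agree, establishing the required character identity. Applying Theorem~\ref{thm:ABS} yields $\Psi_\gamma = \Psi_{\varphi(\gamma)} \circ \varphi$.

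The only non-routine step is the global ascent/global split correspondence in part~(1); everything in part~(2) is formal manipulation with the universal property once we know $\varphi$ respects both the indecomposable decomposition and the bookkeeping of characters on generators.
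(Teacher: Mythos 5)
Your proposal is correct and follows essentially the same route as the paper: the paper likewise observes that $\varphi$ is a graded Hopf morphism, that indecomposable permutations map to indecomposable posets, that $\zeta_\gamma(\beta)=\zeta_{P_\gamma}(P_\beta)$, and then invokes the uniqueness in Theorem~\ref{thm:ABS}. The only difference is that you supply the details (the global ascent/global split correspondence and the multiplicativity argument for the character identity) that the paper leaves as ``straightforward to check.''
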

This shows that the invariant $\Psi_\gamma$ is a particular instance of a poset invariant. In the next subsection we will see a natural morphism of the Hopf algebra $\mathcal P$ inside the Hopf algebra of graphs. This will show that the invariants above are in fact graph invariants. In particular $\Psi_1$ is the chromatic symmetric function of Stanley.

\subsection{The Combinatorial Hopf Algebra of graphs $\mathcal G$}\label{ss:graph}

We now turn to labelled graphs.
Let ${\mathcal G}_n$ denote the $\field$-span of all simple graphs on the set $[n]=\{1,2,\ldots, n\}$. 
An element ${\bf g}\in   {\mathcal G}_n$ is determined by a subset of ${[n]\choose 2}$, the set of 2-subsets of $[n]$.
That is the set of {\it edges} of $\bf g$.
We represent ${\bf g}$ by drawing a line joining $i$ and $j$ if and only if $\{i,j\}\in{\bf g}$.
For instance, if $n=3$,
$$ 
{\mathcal G}_3 = \field\Big\{ 
\begin{tikzpicture}[scale=.5,baseline=.1cm]
	\node (a) at (0,0) {$\scriptscriptstyle 1$}; \node (b) at (-.5,1) {$\scriptscriptstyle 2$}; \node (c) at (.5,1) {$\scriptscriptstyle 3$};
	\draw (a) --  (b);  	\draw (b) --  (c);  \draw (a) --  (c);  \end{tikzpicture} ,
\begin{tikzpicture}[scale=.5,baseline=.1cm]
	\node (a) at (0,0) {$\scriptscriptstyle 1$}; \node (b) at (-.5,1) {$\scriptscriptstyle 2$}; \node (c) at (.5,1) {$\scriptscriptstyle 3$};
		\draw (b) --  (c);  \draw (a) --  (c);  \end{tikzpicture} ,
\begin{tikzpicture}[scale=.5,baseline=.1cm]
	\node (a) at (0,0) {$\scriptscriptstyle 1$}; \node (b) at (-.5,1) {$\scriptscriptstyle 2$}; \node (c) at (.5,1) {$\scriptscriptstyle 3$};
	\draw (a) --  (b);  	 \draw (a) --  (c);  \end{tikzpicture} ,
\begin{tikzpicture}[scale=.5,baseline=.1cm]
	\node (a) at (0,0) {$\scriptscriptstyle 1$}; \node (b) at (-.5,1) {$\scriptscriptstyle 2$}; \node (c) at (.5,1) {$\scriptscriptstyle 3$};
	\draw (a) --  (b);  	\draw (b) --  (c);  \end{tikzpicture} ,
\begin{tikzpicture}[scale=.5,baseline=.1cm]
	\node (a) at (0,0) {$\scriptscriptstyle 1$}; \node (b) at (-.5,1) {$\scriptscriptstyle 2$}; \node (c) at (.5,1) {$\scriptscriptstyle 3$};
	 \draw (a) --  (c);  \end{tikzpicture} ,
\begin{tikzpicture}[scale=.5,baseline=.1cm]
	\node (a) at (0,0) {$\scriptscriptstyle 1$}; \node (b) at (-.5,1) {$\scriptscriptstyle 2$}; \node (c) at (.5,1) {$\scriptscriptstyle 3$};
	\draw (b) --  (c);  \end{tikzpicture} ,
\begin{tikzpicture}[scale=.5,baseline=.1cm]
	\node (a) at (0,0) {$\scriptscriptstyle 1$}; \node (b) at (-.5,1) {$\scriptscriptstyle 2$}; \node (c) at (.5,1) {$\scriptscriptstyle 3$};
	\draw (a) --  (b);  	 \end{tikzpicture} ,
\begin{tikzpicture}[scale=.5,baseline=.1cm]
	\node (a) at (0,0) {$\scriptscriptstyle 1$}; \node (b) at (-.5,1) {$\scriptscriptstyle 2$}; \node (c) at (.5,1) {$\scriptscriptstyle 3$};
	 \end{tikzpicture} 
\Big\}\,.
$$
The graded vector space ${\mathcal G}=\bigoplus_{n\ge 0} {\mathcal G}_n$ as a structure of CHA with the following operations.

Similar to before ${\mathcal G}_0=\field \{ \emptyset\}$ where $\emptyset$ is the unique graph of the empty set. 
The unit is the map $u\colon\field \to {\mathcal G}$ defined by $u_0(1)=\emptyset$. The graded multiplication is given by
 $$ 
 \begin{array}{rcl}
 m_{a,b}\colon {\mathcal G}_a\otimes {\mathcal G}_b&\to&{\mathcal  G}_{a+b}\\
 {\bf g}\otimes  {\bf h} &\mapsto &  {\bf g}\cup  {\bf h}^{\uparrow^a}
 \end{array}
 $$
where for the graph $ {\bf h}$ on $\{1,2,\ldots , b\}$, the subset $ {\bf h}^{\uparrow^a}\subseteq{\{a+1, a+2,\ldots,a+b\}\choose 2}$ is defined by   
$\{i,j\}\in  {\bf h}$ if and only if $\{i+a,j+a\}\in  {\bf h}^{\uparrow^a}$. 
The union ${\bf g}\cup  {\bf h}^{\uparrow^a}$ is clearly a subset of ${[a+b]\choose 2}$.
The map $m_{a,b}$ is the linear extension of this operation. 

  The graded comultiplication of $\mathcal  G$ is  given by the map
 $$ 
 \begin{array}{rcl}
 \Delta_{a,b}\colon {\mathcal G}_{a+b}&\to&{\mathcal G}_{a}\otimes {\mathcal G}_b\\
{\bf g} &\mapsto & \displaystyle \sum_{(A_1,A_2)\models \{1,2,\ldots,n\} \atop  |A_1|=a,\ |A_2|=b}  st\big({\bf g}|_{A_1}\big)\otimes st\big({\bf g}|_{A_2}\big),
 \end{array}
 $$
 where ${\bf g}|_{A_k}=\big\{ \{i,j\}\in {\bf g} : \{i,j\}\subseteq A_k\big\}$.  For $a=0$ or $b=0$ we use the same convention as before.
 The graded counit is defined by $\epsilon_0(\emptyset)=1$. With these operations, ${\mathcal G}$ is a graded connected bialgebra and hence a graded connected Hopf algebra.
 Again we have a Hopf algebra which is cocommutative but not commutative.
 
 \begin{example} \label{ex:graphops}
 Given 
 ${\bf g}=\begin{tikzpicture}[scale=.5,baseline=.1cm]
	\node (a) at (0,1) {$\scriptscriptstyle 2$}; \node (b) at (0,0) {$\scriptscriptstyle 1$}; \node (c) at (.6,.5) {$\scriptscriptstyle 3$};
	\draw (a) --  (b); \draw (a)--(c); \end{tikzpicture}$ 
and 
${\bf h}=\begin{tikzpicture}[scale=.5,baseline=.1cm]
	\node (a) at (0,1) {$\scriptscriptstyle 1$}; \node (b) at (0,0) {$\scriptscriptstyle 2$}; \node (c) at (1,0) {$\scriptscriptstyle 3$}; \node (d) at (1,1) {$\scriptscriptstyle 4$}; 
	\draw (a) --  (b);  \draw (a) --  (c);  	\draw (d) --  (c); \draw (a) -- (d); \end{tikzpicture}$
we have 
$$m_{3,4}({\bf g}\otimes {\bf h})= {\bf g}\cup{\bf h}^{\uparrow^3} =
        \begin{tikzpicture}[scale=.5,baseline=.1cm]
	\node (a) at (0,1) {$\scriptscriptstyle 2$}; \node (b) at (0,0) {$\scriptscriptstyle 1$}; \node (c) at (.6,.5) {$\scriptscriptstyle 3$};
	\draw (a) --  (b); \draw (a)--(c); \end{tikzpicture}
	\begin{tikzpicture}[scale=.5,baseline=.1cm]
	\node (a) at (0,1) {$\scriptscriptstyle 4$}; \node (b) at (0,0) {$\scriptscriptstyle 5$}; \node (c) at (1,0) {$\scriptscriptstyle 6$}; \node (d) at (1,1) {$\scriptscriptstyle 7$}; 
	\draw (a) --  (b);  \draw (a) --  (c);  	\draw (d) --  (c); \draw (a) -- (d); \end{tikzpicture}
	$$
 $$\Delta_{1,3}({\bf h})=
     {\scriptscriptstyle 1}\otimes \begin{tikzpicture}[scale=.5,baseline=.1cm]
	 \node (b) at (0,0) {$\scriptscriptstyle 1$}; \node (c) at (1,0) {$\scriptscriptstyle 2$}; \node (d) at (1,1) {$\scriptscriptstyle 3$}; 
	\draw (d) --  (c);  \end{tikzpicture}
  + {\scriptscriptstyle 1}\otimes \begin{tikzpicture}[scale=.5,baseline=.1cm]
	\node (a) at (0,1) {$\scriptscriptstyle 1$};  \node (c) at (1,0) {$\scriptscriptstyle 2$}; \node (d) at (1,1) {$\scriptscriptstyle 3$}; 
	 \draw (a) --  (c);  	\draw (d) --  (c); \draw (a) -- (d);  \end{tikzpicture}
  + {\scriptscriptstyle 1}\otimes \begin{tikzpicture}[scale=.5,baseline=.1cm]
	\node (a) at (0,1) {$\scriptscriptstyle 1$}; \node (b) at (0,0) {$\scriptscriptstyle 2$};  \node (d) at (1,1) {$\scriptscriptstyle 3$}; 
	\draw (a) --  (b);  \draw (a) -- (d);  \end{tikzpicture}
  + {\scriptscriptstyle 1}\otimes \begin{tikzpicture}[scale=.5,baseline=.1cm]
	\node (a) at (0,1) {$\scriptscriptstyle 1$}; \node (b) at (0,0) {$\scriptscriptstyle 2$}; \node (c) at (1,0) {$\scriptscriptstyle 3$}; 
	\draw (a) --  (b);  \draw (a) --  (c);  	 \end{tikzpicture}$$
  $$\Delta_{2,2}({\bf h})=
       2\big(\begin{tikzpicture}[scale=.5,baseline=.1cm] \node (a) at (0,1) {$\scriptscriptstyle 1$}; \node (b) at (0,0) {$\scriptscriptstyle 2$}; \draw (a) --  (b);  \end{tikzpicture} 
       \otimes	
       \begin{tikzpicture}[scale=.5,baseline=.1cm]  \node (c) at (0,0) {$\scriptscriptstyle 2$}; \node (d) at (0,1) {$\scriptscriptstyle 1$};  \draw (d) --  (c);  \end{tikzpicture}\big)
 +   \begin{tikzpicture}[scale=.5,baseline=.1cm] \node (a) at (0,1) {$\scriptscriptstyle 1$}; \node (c) at (.6,0) {$\scriptscriptstyle 2$}; \draw (a) --  (c);   \end{tikzpicture} 
      \otimes  \begin{tikzpicture}[scale=.5,baseline=.1cm] \node (a) at (0,0) {$\scriptscriptstyle 1$}; \node (c) at (.6,1) {$\scriptscriptstyle 2$};    \end{tikzpicture} 
 +   \begin{tikzpicture}[scale=.5,baseline=.1cm] \node (a) at (0,1) {$\scriptscriptstyle 1$}; \node (b) at (.8,1) {$\scriptscriptstyle 2$}; \draw (.2,1) --  (.6,1);  \end{tikzpicture}  \otimes 
      \begin{tikzpicture}[scale=.5,baseline=.1cm] \node (a) at (0,0) {$\scriptscriptstyle 1$}; \node (b) at (.8,0) {$\scriptscriptstyle 2$};  \end{tikzpicture}  
 +   \begin{tikzpicture}[scale=.5,baseline=.1cm] \node (a) at (0,0) {$\scriptscriptstyle 1$}; \node (c) at (.6,1) {$\scriptscriptstyle 2$};    \end{tikzpicture}  \otimes  
       \begin{tikzpicture}[scale=.5,baseline=.1cm] \node (a) at (0,1) {$\scriptscriptstyle 1$}; \node (c) at (.6,0) {$\scriptscriptstyle 2$}; \draw (a) --  (c);   \end{tikzpicture}  
 +   \begin{tikzpicture}[scale=.5,baseline=.1cm] \node (a) at (0,0) {$\scriptscriptstyle 1$}; \node (b) at (.8,0) {$\scriptscriptstyle 2$};  \end{tikzpicture}   \otimes  
       \begin{tikzpicture}[scale=.5,baseline=.1cm] \node (a) at (0,1) {$\scriptscriptstyle 1$}; \node (b) at (.8,1) {$\scriptscriptstyle 2$}; \draw (.2,1) --  (.6,1);  \end{tikzpicture}   
        $$
 \end{example}

Given a graph ${\bf g}\in {\mathcal G}_n$, we say that $1\le i\le n-1$ is a \emph{global split} of ${\bf g}$ if  
   $${\bf g} = {\bf g}|_{\{1,2,\ldots,i\}} \cup {\bf g}|_{\{i+1,\ldots,n\}}$$
  It is \emph{indecomposable} if it has no global splits. Hence $\mathcal G$ is a free algebra
  and that the generators are the graph ${\bf h}$ with no global splits. A complete list of free generators start with
    $${\scriptscriptstyle 1}, 
    \begin{tikzpicture}[scale=.5,baseline=-.05cm] \node (c) at (0,0) {$\scriptscriptstyle 1$}; \node (d) at (.6,0) {$\scriptscriptstyle 2$}; \draw (.15,0) --  (.4,0);  \end{tikzpicture},
\begin{tikzpicture}[scale=.5,baseline=.1cm]
	\node (a) at (0,0) {$\scriptscriptstyle 1$}; \node (b) at (-.5,1) {$\scriptscriptstyle 2$}; \node (c) at (.5,1) {$\scriptscriptstyle 3$};
	\draw (a) --  (b);  	\draw (b) --  (c);  \draw (a) --  (c);  \end{tikzpicture} ,
\begin{tikzpicture}[scale=.5,baseline=.1cm]
	\node (a) at (0,0) {$\scriptscriptstyle 1$}; \node (b) at (-.5,1) {$\scriptscriptstyle 2$}; \node (c) at (.5,1) {$\scriptscriptstyle 3$};
		\draw (b) --  (c);  \draw (a) --  (c);  \end{tikzpicture} ,
\begin{tikzpicture}[scale=.5,baseline=.1cm]
	\node (a) at (0,0) {$\scriptscriptstyle 1$}; \node (b) at (-.5,1) {$\scriptscriptstyle 2$}; \node (c) at (.5,1) {$\scriptscriptstyle 3$};
	\draw (a) --  (b);  	 \draw (a) --  (c);  \end{tikzpicture} ,
\begin{tikzpicture}[scale=.5,baseline=.1cm]
	\node (a) at (0,0) {$\scriptscriptstyle 1$}; \node (b) at (-.5,1) {$\scriptscriptstyle 2$}; \node (c) at (.5,1) {$\scriptscriptstyle 3$};
	\draw (a) --  (b);  	\draw (b) --  (c);  \end{tikzpicture} ,
\begin{tikzpicture}[scale=.5,baseline=.1cm]
	\node (a) at (0,0) {$\scriptscriptstyle 1$}; \node (b) at (-.5,1) {$\scriptscriptstyle 2$}; \node (c) at (.5,1) {$\scriptscriptstyle 3$};
	 \draw (a) --  (c);  \end{tikzpicture} ,
     \ldots $$
As before, a character $\zeta\colon{\mathcal P}\to\field$ is completely determined by its values on the free generators. The character $\zeta_1$ is defined by $\zeta_1(1)=1$ and set to zero for all other generators.
This defines $\zeta_1$ for all graphs as follows:
  $$\zeta_1({\bf g}) =
    \begin{cases}
      1&\text{if } {\bf g}=\emptyset\subseteq{[n]\choose 2}\,,\\
      0&\text{otherwise.}
    \end{cases}$$
Similarly, we define a character $\zeta_{\Gtwo}$ by letting $\zeta_{\Gtwo}(\Gtwo)=1$ and set $\zeta_{\Gtwo}$ to zero for all other generators.
This defines $\zeta_{\Gtwo}$ for all graphs as follows:
  $$\zeta_{\Gtwo}({\bf g}) =
    \begin{cases}
      1&\text{if }  {\bf g}=\big\{\{2i-1,2i\}: 1\le i\le n\big\} \,,\\
      0&\text{otherwise.}
    \end{cases}$$
Here $\zeta_{\Gtwo}({\bf g})=0$ unless ${\bf g}\in{\mathcal G}_{2n}$ a graph on an even number of entries.
We can define a character $\zeta_{{\bf h}}$ for ${\bf h}$ with no global splits which zero of all generators except ${\bf h}$ and $\zeta_{{\bf h}}({\bf h}) = 1$.

For any generator ${\bf h}$ we have a CHA given by $({\mathcal G},\zeta_{\bf h})$ and a Hopf morphism
$\Psi_{\bf h}\colon {\mathcal G}\to Sym$. 
There is a Hopf morphism $\mathcal P$ to $\mathcal G$. Given an order $P \in {\mathcal P}_n$ we construct a graph ${\bf g}_P\in {\mathcal G}_n$ 
where $\{i,j\}\in {\bf g}_P$ if and only if $i$ and $j$ are incomparable in $P$.
The linear extension of the map $P \mapsto {\bf g}_P$ defines a graded Hopf morphism ${\mathcal P}\rightarrow {\mathcal G}$. Indeed, it is straightforward to check that
  $${\bf g}_{m_{a,b}(P\otimes Q)} =m_{a,b}({\bf g}_P\otimes {\bf g}_Q)\qquad\text{and}\qquad {\bf g}_{\Delta_{a,b}(P)} =\Delta_{a,b}({\bf g}_P).$$
Now if we have an indecomposable order $Q$, it is not always the case that ${\bf g}_Q$ is an indecomposable graph. 
For example, ${\bf g}_{\!\!\! \begin{tikzpicture}[scale=.5,baseline=-.3cm] \node (c) at (0,0) {$\scriptscriptstyle 2$}; \node (d) at (0,.6) {$\scriptscriptstyle 1$}; \draw (0,.2) --  (0,.4);  \end{tikzpicture}\!\!\!}={\scriptscriptstyle 1\,\,2}$ is not indecomposable graph. When ${\bf g}_Q$ is indecomposable, the character $\zeta_Q$ and 
$\zeta_{{\bf g}_Q}$ are related as follows:
  $$ \zeta_Q(P)=\zeta_{{\bf g}_Q}({\bf g}_P).$$
Hence, the unicity of the morphism in Theorem~\ref{thm:ABS}  gives us that $\Psi_Q(P)=\Psi_{{\bf g}_Q}({\bf g}_P)$ in this case. We summarize this in the following Lemma.
\begin{lemma} Let $\psi\colon {\mathcal P}\rightarrow {\mathcal G}$ be the Hopf morphism defined by $\psi(P)={\bf g}_P$. For $Q$ an indecomposable poset of $\mathcal P$,
if $\psi(Q)$ is an indecomposable graph of $\mathcal G$, then
we have that
The morphisms $\Psi_Q\colon{\mathcal P}\to Sym$ and $\Psi_{\psi(P)}\colon{\mathcal G}\to Sym$ satisfy $\Psi_Q=\Psi_{\psi(Q)}\circ\psi$.
\end{lemma}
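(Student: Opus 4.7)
The plan is to mirror the proof of the preceding lemma (for $\varphi\colon\mathcal S\hookrightarrow\mathcal P$) and to invoke the uniqueness clause of Theorem~\ref{thm:ABS}. The two displayed identities
$${\bf g}_{m_{a,b}(P\otimes Q)}=m_{a,b}({\bf g}_P\otimes {\bf g}_Q)\qquad\text{and}\qquad {\bf g}_{\Delta_{a,b}(P)}=\Delta_{a,b}({\bf g}_P)$$
stated just before the lemma show that $\psi\colon\mathcal P\to\mathcal G$ is a graded Hopf morphism. Since $\Psi_{\psi(Q)}\colon\mathcal G\to Sym$ is a graded Hopf morphism by Theorem~\ref{thm:ABS}, the composition $\Psi_{\psi(Q)}\circ\psi\colon\mathcal P\to Sym$ is automatically a graded Hopf morphism as well.

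Applied to $(\mathcal P,\zeta_Q)$, Theorem~\ref{thm:ABS} supplies a \emph{unique} graded Hopf morphism $\mathcal P\to Sym$ whose composition with $\varphi_1$ recovers $\zeta_Q$, and by definition that morphism is $\Psi_Q$. Hence, to conclude $\Psi_Q=\Psi_{\psi(Q)}\circ\psi$ it is enough to check that $\varphi_1\circ\Psi_{\psi(Q)}\circ\psi=\zeta_Q$. Using $\varphi_1\circ\Psi_{\psi(Q)}=\zeta_{\psi(Q)}=\zeta_{{\bf g}_Q}$, this collapses to the pointwise identity
$$\zeta_{{\bf g}_Q}({\bf g}_P)=\zeta_Q(P)\qquad\text{for every poset }P,$$
which is precisely the character equality highlighted in the text immediately above the lemma. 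So the only real content of the lemma is this equality.

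To verify the character identity I would exploit freeness. Both $\mathcal P$ and $\mathcal G$ are free algebras on their indecomposable generators, and both $\zeta_Q$ and $\zeta_{{\bf g}_Q}$ are multiplicative; since $\psi$ is an algebra morphism, it suffices to test the identity on indecomposable posets $R$. When $R=Q$ both sides equal $1$ by definition. For an indecomposable $R\neq Q$ the right-hand side is $0$, and one must show ${\bf g}_R$ is not equal to any product ${\bf g}_Q^{\,k}$ inside $\mathcal G$. The hypothesis that ${\bf g}_Q$ is indecomposable enters here: the unique factorization of elements of the free algebra $\mathcal G$ means that an identity ${\bf g}_R={\bf g}_Q^{\,k}$ forces a canonical block partition of the vertex set of $R$ across which ${\bf g}_R$ carries no edges, and one then tries to lift this block partition to a global split of $R$ itself.

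The main obstacle is exactly this lifting step: the assignment $P\mapsto{\bf g}_P$ is not injective, so a factorization of ${\bf g}_R$ need not come from a factorization of $R$. The delicate point is to analyze the between-block comparabilities of $R$ that are forced by the absence of between-block edges in ${\bf g}_Q^{\,k}$ and then to use the indecomposability of $R$ to rule out the alternative block orderings, concluding $k=1$ and $R=Q$. This is the step where I would expect to spend the most care.
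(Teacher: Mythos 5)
Your strategy coincides with the paper's: the authors likewise observe that $\psi$ is a graded Hopf morphism, assert the pointwise character identity $\zeta_Q(P)=\zeta_{\mathbf{g}_Q}(\mathbf{g}_P)$ without proof, and conclude by the uniqueness clause of Theorem~\ref{thm:ABS}; your further reduction, via multiplicativity and freeness, to checking the identity on indecomposable posets $R$ is also sound. The problem is the step you yourself flag as the ``main obstacle'': it is not merely delicate, it fails. The between-block comparabilities of $R$ forced by the absence of between-block edges in $\mathbf{g}_R$ come with no prescribed direction, and the indecomposability of $R$ does not rule out the ``wrong'' directions --- on the contrary, a wrong direction is exactly what makes $R$ indecomposable while $\mathbf{g}_R$ still factors in $\mathcal{G}$.

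Concretely, take $Q$ to be the two-element antichain, so that $\psi(Q)$ is the single edge on $\{1,2\}$, which is indecomposable. Let $R$ be the poset on $\{1,2,3,4\}$ with relations $3<_R 1$, $3<_R 2$, $4<_R 1$, $4<_R 2$, and with $1\parallel 2$ and $3\parallel 4$. Then $R$ is indecomposable (no $i\in\{1,2,3\}$ is a global split: $1\parallel 2$ kills $i=1$, $3<_R 1$ kills $i=2$, and $3\parallel 4$ kills $i=3$), so $\zeta_Q(R)=0$; but $\mathbf{g}_R=\big\{\{1,2\},\{3,4\}\big\}=\psi(Q)\cdot\psi(Q)$ in $\mathcal{G}$, so $\zeta_{\mathbf{g}_Q}(\mathbf{g}_R)=1$. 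Correspondingly $\Psi_Q(R)=2M_{(2,2)}$ while $\Psi_{\psi(Q)}(\mathbf{g}_R)=M_{(4)}+2M_{(2,2)}$, so the conclusion of the lemma itself fails for this $R$. The statement therefore needs an additional hypothesis on the posets considered --- for instance that they be naturally labelled, i.e.\ $i<_P j$ implies $i<j$, as holds for the posets $P_\sigma$ coming from permutations, which is the only situation in which the lemma is subsequently used. So your proof cannot be completed as written; your diagnosis of where the difficulty sits is exactly right, and the paper's own argument passes over the same point by simply asserting the character identity.
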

This shows that certain invariants $\Psi_Q$ are particular instances of a graph invariants. This is true for $\Psi_1$ and $\Psi_{\Ptwo}$.
Combining this with the previous subsection, this show that certain invariants on permutations are in fact graph invariants. 
In particular $\Psi_1$ is Stanley's chromatic symmetric function (refer to \cite{ABS} to see that $\Psi_1$ on graph is indeed Stanley's chromatic symmetric function).

\begin{remark} One may consider larger Hopf algebras containing $\mathcal G$ such as the Hopf algebra of hypergraphs.
The invariants we construct here would lift to natural invariants in these larger spaces. 
A (simple) hypegraphs $\bf h$ on $[n]$ is determined by a subset of $\{ A\subseteq [n] : |A|\ge 2\}$. We let ${\mathcal H}_n$ be the span of hypergraphs on $[n]$.
The Hopf algebra ${\mathcal H}=\bigoplus_{n\ge 0} {\mathcal H}_n$ is similar to $\mathcal G$ with product and coproduct using shifted union and standardized decomposition. We will sometimes be interested in the behavior of our result for $\mathcal H$.
\end{remark}

\subsection{The Combinatorial Hopf Algebra of isomorphism classes of graphs $\overline{\mathcal G}$}\label{ss:isograph}
In this section we consider the quotient $\mathcal G\to \overline{\mathcal G}$ where $\overline{\mathcal G}$ is the isomorphism classes of graphs.
As a graded vector space, $\overline{\mathcal G}=\bigoplus_{n\ge 0} \overline{\mathcal G}_n$ where $\overline{\mathcal G}_n$ is the span of the isomorphism classes
of graphs on the set $[n]=\{1,2\ldots, n\}$.

We represent $\overline{\bf g}\in \overline{\mathcal G}_n $ by drawing an unlabelled graph.
For instance, if $n=3$,
$$ 
\overline{\mathcal G}_3 = \field\Big\{ 
\begin{tikzpicture}[scale=.5,baseline=.1cm]
	\node (a) at (0,0) {$\scriptscriptstyle \bullet$}; \node (b) at (-.5,1) {$\scriptscriptstyle \bullet$}; \node (c) at (.5,1) {$\scriptscriptstyle \bullet$};
	\draw (a) --  (b);  	\draw (b) --  (c);  \draw (a) --  (c);  \end{tikzpicture} ,
\begin{tikzpicture}[scale=.5,baseline=.1cm]
	\node (a) at (0,0) {$\scriptscriptstyle \bullet$}; \node (b) at (-.5,1) {$\scriptscriptstyle \bullet$}; \node (c) at (.5,1) {$\scriptscriptstyle \bullet$};
		\draw (b) --  (c);  \draw (a) --  (c);  \end{tikzpicture} ,
\begin{tikzpicture}[scale=.5,baseline=.1cm]
	\node (a) at (0,0) {$\scriptscriptstyle \bullet$}; \node (b) at (-.5,1) {$\scriptscriptstyle \bullet$}; \node (c) at (.5,1) {$\scriptscriptstyle \bullet$};
	 \draw (a) --  (c);  \end{tikzpicture} ,
\begin{tikzpicture}[scale=.5,baseline=.1cm]
	\node (a) at (0,0) {$\scriptscriptstyle \bullet$}; \node (b) at (-.5,1) {$\scriptscriptstyle \bullet$}; \node (c) at (.5,1) {$\scriptscriptstyle \bullet$};
	 \end{tikzpicture} 
\Big\}\,.
$$
The operations on $\overline{\mathcal G}$ are induced by the operations on ${\mathcal G}$ by forgetting the labelling. The indecomposable elements now correspond
to connected graphs and the Hopf algebra $\overline{\mathcal G}$ is freely generated by connected graphs.
A complete list of indecomposable elements of $\overline{\mathcal G}$ start with
    $${\scriptscriptstyle \bullet}, 
    \begin{tikzpicture}[scale=.5,baseline=-.05cm] \node (c) at (0,0) {$\scriptscriptstyle \bullet$}; \node (d) at (.6,0) {$\scriptscriptstyle \bullet$}; \draw (.15,0) --  (.4,0);  \end{tikzpicture},
\begin{tikzpicture}[scale=.5,baseline=.1cm]
	\node (a) at (0,0) {$\scriptscriptstyle \bullet$}; \node (b) at (-.5,1) {$\scriptscriptstyle \bullet$}; \node (c) at (.5,1) {$\scriptscriptstyle \bullet$};
	\draw (a) --  (b);  	\draw (b) --  (c);  \draw (a) --  (c);  \end{tikzpicture} ,
\begin{tikzpicture}[scale=.5,baseline=.1cm]
	\node (a) at (0,0) {$\scriptscriptstyle \bullet$}; \node (b) at (-.5,1) {$\scriptscriptstyle \bullet$}; \node (c) at (.5,1) {$\scriptscriptstyle \bullet$};
		\draw (b) --  (c);  \draw (a) --  (c);  \end{tikzpicture} ,
     \ldots $$
The map $\Psi_\bullet\colon \overline{\mathcal G}\to Sym$ sends a graph to its chromatic symmetric function. From the previous section we have that
  $$\Psi_1(P)=\Psi_\bullet(\overline{\bf g}_{P}) \qquad\text{and}\qquad \Psi_1(\sigma)=\Psi_\bullet(\overline{\bf g}_{P_\sigma}),$$
for any $P\in{\mathcal P}_n$ and $\sigma\in{\mathcal S}_n$. 

\begin{remark}
To simplify the notation we will sometimes omit explicitly notating the quotient map $\mathcal{G} \to \overline{\mathcal{G}}$.
For example, we may simply write $\Psi_{\bullet}(\mathbf{g})$ for a labeled graph $\mathbf{g} \in \mathcal{G}$.
\end{remark}

One very striking conjecture of Stanley and Stembridge~\cite[Conjecture 5.5]{Stan-Stem} is the following.

\begin{conjecture}[\cite{Stan-Stem,Sta95}] If $P\in{\mathcal P}_n$ does not contains any $i,j,k,\ell\in [n]$ such that $i<_Pj<_Pk$ and $\ell$ is incomparable to $i,j,$ and $k$, then
the expansion of $\Psi_1(P)$ in terms of elementary symmetric functions $e_\lambda$ is positive.
\label{conj:Stan-Stem}
\end{conjecture}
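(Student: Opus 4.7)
The plan is to first reduce the conjecture to a more tractable class of posets. The hypothesis on $P$ says that $P$ contains no induced subposet isomorphic to $\mathbf{3}+\mathbf{1}$, i.e.\ $P$ is $(\mathbf{3}+\mathbf{1})$-free. By a theorem of Guay-Paquet, the $e$-positivity of $\Psi_1(P)$ for arbitrary $(\mathbf{3}+\mathbf{1})$-free posets can be deduced from the special case in which $P$ is both $(\mathbf{3}+\mathbf{1})$-free and $(\mathbf{2}+\mathbf{2})$-free, namely when $P$ is a unit interval order. So I would first invoke that reduction and then restrict attention to unit interval orders, which are classified by Dyck paths (or, equivalently, by Hessenberg functions).

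Having reduced to the unit interval order case, the natural next step is to work with the Shareshian--Wachs $q$-refinement $X_P(\mathbf{x};q)$ of the chromatic symmetric function and try to prove the stronger statement that $X_P(\mathbf{x};q)$ is $e$-positive with coefficients in $\mathbb{N}[q]$. This refinement is well-behaved: it admits a positive expansion in the basis of LLT polynomials (Carlsson--Mellit), and by Brosnan--Chow and Guay-Paquet it is the equivariant Frobenius character of the cohomology of the associated regular semisimple Hessenberg variety. The strategy would be to leverage one of these three pictures: either produce a direct combinatorial formula for the $e$-expansion (in the spirit of the Dahlberg--van Willigenburg decompositions for special shapes), or use the Hessenberg variety geometry to realize each $e_\lambda$-coefficient as the graded character of a representation and prove nonnegativity module-theoretically. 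An auxiliary step that could be useful is to use the CHA machinery reviewed in Section~\ref{s:hopf}: since $\varphi\colon\mathcal S\hookrightarrow\mathcal P$ and $\psi\colon\mathcal P\to\mathcal G$ are Hopf morphisms, one can try to reduce to indecomposable $P$ (where the antipode formula simplifies) and then induct on $n$ via Takeuchi's formula~\eqref{eq:takeuchi} combined with a carefully chosen basis of $e$-positive atoms (such as paths, cycles, or Dyck-path "strips").

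Finally, I would attempt to verify the output combinatorially by designing an explicit bijection between proper colorings of the incomparability graph of $P$ (grouped by type) and sequences of matchings/independent sets coming from the $e$-expansion, possibly via a sign-reversing involution on signed objects indexed by acyclic orientations. The \emph{main obstacle} is of course that Conjecture~\ref{conj:Stan-Stem} is the Stanley--Stembridge conjecture, which has been open since 1993 and remains one of the central unresolved problems in algebraic combinatorics; all of the partial progress above stops short of a general proof. Any plan must either find an unexpectedly tractable combinatorial identity on the Dyck-path side or produce the missing geometric/representation-theoretic decomposition of the Hessenberg variety cohomology into pieces whose Frobenius characters are each $e_\lambda$. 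I expect the hardest part to be precisely this last step: converting Schur positivity (which follows from the Hessenberg variety interpretation) into $e$-positivity, since the bases $\{s_\lambda\}$ and $\{e_\lambda\}$ are related by the inverse Kostka numbers, which are not sign-definite.
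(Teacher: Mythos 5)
The statement you are asked to prove is not proved in the paper at all: it is stated there as Conjecture~\ref{conj:Stan-Stem}, the Stanley--Stembridge $e$-positivity conjecture for $(\mathbf{3}+\mathbf{1})$-free posets, and the authors offer no argument for it --- indeed the surrounding discussion (the claw-free counterexample, the remarks after Proposition~\ref{prop:contraction}) treats it explicitly as open. So there is no ``paper's own proof'' to compare against, and your proposal, as you yourself acknowledge in its final paragraph, is a research program rather than a proof.

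Your survey of the known reductions is accurate: Guay-Paquet's reduction to unit interval orders, the Shareshian--Wachs $q$-refinement, the LLT expansion of Carlsson--Mellit, and the Brosnan--Chow / Guay-Paquet identification with Hessenberg variety cohomology are all correctly described, and you correctly identify the crux --- that Schur positivity (which does follow from the geometric picture) does not yield $e$-positivity because the inverse Kostka numbers are not sign-definite. But none of the three routes you sketch (a direct combinatorial $e$-expansion, a module-theoretic decomposition of the Hessenberg cohomology, or a sign-reversing involution on acyclic-orientation-indexed objects) is carried out, and the auxiliary CHA step you propose (reducing to indecomposables via Takeuchi's formula and inducting with ``$e$-positive atoms'') would not work as stated: the antipode and coproduct of $\mathcal P$ do not interact with the $e$-basis in any way that preserves positivity, and restriction to induced subposets of a $(\mathbf{3}+\mathbf{1})$-free poset does not produce a decomposition of $\Psi_1(P)$ into $e$-positive summands. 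The gap is therefore the entire conjecture; the appropriate deliverable here is to state clearly that the claim is open and that the proposal is a (correct) map of the partial progress, not an argument.
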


A poset $P\in{\mathcal P}_n$ that contains some $i,j,k,\ell\in [n]$ such that $i<_Pj<_Pk$ and $\ell$ incomparable to $i,j,k$
is said to contain a $\bf 3+1$ pattern. If $P$ has a $\bf 3+1$ pattern, then $\overline{\bf g}_{P}$ will contain an induced claw; that is an induced subgraph of the form
$$\begin{tikzpicture}[scale=.5,baseline=.1cm]
	\node (a) at (0,0)  [inner sep =-2pt]{$\scriptscriptstyle \bullet$}; \node (b) at (1,0)  [inner sep =-2pt]{$\scriptscriptstyle \bullet$}; 
	\node (c) at (1,1)  [inner sep =-2pt]{$\scriptscriptstyle \bullet$}; \node (d) at (1,-1)  [inner sep =-2pt]{$\scriptscriptstyle \bullet$};
	\draw (a) --  (b);  	\draw  (a) -- (c);  \draw  (a) --  (d);  \end{tikzpicture}$$
On the other hand, we can find graph that has no claw, yet the chromatic symmetric function is not $e$-positive. 
For example,
$$\begin{tikzpicture}[scale=.5,baseline=.1cm]
	\node (a) at (0,0)  [inner sep =-2pt]{$\scriptscriptstyle \bullet$}; \node (b) at (1,0)  [inner sep =-2pt]{$\scriptscriptstyle \bullet$}; 
	\node (c) at (.5,.866)  [inner sep =-2pt]{$\scriptscriptstyle \bullet$}; \node (d) at (.5,1.732)  [inner sep =-2pt]{$\scriptscriptstyle \bullet$};
	\node (e) at (1.75,-.422)  [inner sep =-2pt]{$\scriptscriptstyle \bullet$}; \node (f) at (-.75,-.433)  [inner sep =-2pt]{$\scriptscriptstyle \bullet$}; 
	\draw (a) --  (b);  \draw (a) --  (f);  	\draw  (b) -- (c); \draw  (b) -- (e);  \draw  (a) --  (c);   \draw  (d) --  (c);  \end{tikzpicture}$$
is claw-free but not $e$-positive.
This shows that the conjecture above is really a property of posets rather than a property of graphs.
What we expect is true for graphs is the following conjecture of Stanley.
\begin{conjecture}[\cite{stanley98}] If ${\bf g}\in{\mathcal G}_n$ is claw free, then
the expansion of $\Psi_\bullet({\bf g})$ in terms of the Schur symmetric functions $s_\lambda$ is positive.
\end{conjecture}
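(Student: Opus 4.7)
The plan is to exploit the CHA structure together with what is already known for important subclasses of claw-free graphs. First, I would observe that the class of claw-free graphs is closed under taking induced subgraphs, so the subspace $\overline{\mathcal G}^{\,\mathrm{cf}}\subseteq \overline{\mathcal G}$ spanned by claw-free graphs is closed under $\Delta$, since the coproduct decomposes a graph into two induced subgraphs. Thus the conjecture is one that lives naturally inside a sub-coalgebra of $\overline{\mathcal G}$, and one can hope to mount an inductive argument by building up claw-free graphs from smaller pieces. More concretely, I would attempt to identify a set of ``atomic'' claw-free graphs and operations among them for which Schur positivity is both verifiable and preserved.

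Second, I would try to bootstrap from the known special cases. Gasharov has proved Schur positivity for incomparability graphs of $(\mathbf{3}+\mathbf{1})$-free posets, which by the preceding lemma sit inside $\overline{\mathcal G}^{\,\mathrm{cf}}$ and yield nonzero chromatic symmetric functions for many of the indecomposable generators. Shareshian--Wachs and others have established $e$-positivity (hence Schur positivity) for further families, notably unit interval graphs. The strategy would be to produce a combinatorial model for the Schur expansion of $\Psi_\bullet({\bf g})$ generalizing Gasharov's $P$-tableaux, for instance by assigning to each claw-free ${\bf g}$ a suitable family of ``colored acyclic orientations'' compatible with a total vertex order, and showing that under a sign-reversing involution the surviving objects are in bijection with semistandard Young tableaux.

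Third, one could attempt to leverage the Chudnovsky--Seymour structure theorem for claw-free graphs, which decomposes them in terms of line graphs of triangle-free graphs, circular interval graphs, and a controlled list of exceptions; then verify Schur positivity for each basic piece and check that it is preserved under the gluing operations that reconstruct a general claw-free graph. This would interact with the Hopf-algebraic viewpoint through the multiplicative and comultiplicative behavior of $\Psi_\bullet$.

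The hard part will be that this is a well-known open conjecture of Stanley that has resisted direct attack for decades: Schur positivity is a global property that does not cleanly match any local combinatorial feature of a graph, and the proof techniques currently available either require extra structure (a compatible poset, as in Gasharov), or prove the stronger statement of $e$-positivity only for narrower classes. The CHA apparatus developed in this paper gives a uniform framework for these invariants, but extracting Schur positivity from the quasisymmetric expansion $\Psi_\bullet({\bf g}) = \sum_\alpha \zeta_\alpha({\bf g}) M_\alpha$ is famously delicate, so I would expect that a successful proof must bring in genuinely new ingredients, possibly from Hessenberg varieties, Kazhdan--Lusztig theory, or a direct representation-theoretic realization of the chromatic symmetric function on claw-free graphs.
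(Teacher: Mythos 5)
This statement is labeled as a conjecture in the paper, and it is in fact Stanley's long-standing open conjecture from \cite{stanley98}: the paper offers no proof of it, and neither does your proposal. What you have written is a research program --- a list of plausible angles of attack --- rather than an argument, and you say as much yourself in the final paragraph. For the purposes of this exercise the correct response would have been to recognize that the statement is stated as open and that no proof is expected; as a proof attempt, every step that would actually need to be carried out is left as a gap.

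Two of the gaps are worth naming concretely, since they are exactly where the known techniques break down. First, your proposed combinatorial model (a sign-reversing involution on ``colored acyclic orientations'' whose fixed points biject with semistandard tableaux) is precisely Gasharov's method, and it relies essentially on having a poset structure compatible with the graph: for $(\mathbf{3}+\mathbf{1})$-free posets the $P$-tableaux give the cancellation, but for a general claw-free graph there is no canonical partial order to organize the involution, and no one has found a substitute. Second, the Chudnovsky--Seymour route founders on the fact that $\Psi_\bullet$ is multiplicative only with respect to \emph{disjoint union} of graphs; the structure-theorem decomposition of a connected claw-free graph into strips and circular interval pieces is a gluing along shared vertex sets, and $\Psi_\bullet$ of the whole graph is not any simple (let alone Schur-positivity-preserving) function of the $\Psi_\bullet$ of the pieces. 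Likewise, closure of $\overline{\mathcal G}^{\,\mathrm{cf}}$ under $\Delta$ gives you a sub-coalgebra, but Schur positivity is not a property that the coproduct propagates, so the CHA framework by itself does not supply the induction you would need.
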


In his original paper~\cite{Sta95}, Stanley gave many results and conjectures regarding  chromatic symmetric functions.
One interesting and useful result is the expansion of $\Psi_\bullet({\bf g})$ in terms of power sum symmetric functions $p_\lambda$.
Let $\omega$ be the (multiplicative) symmetric function involution that maps $\omega(p_k)=(-1)^{k-1}p_k$.
To state the result, let us recall the definition of the bond lattice of a graph.
Let ${\bf g}$ be a graph with vertex set $V$. 
A partition $Q=\{q_1,q_2,\dots,q_k\}$ of $V$ is said to be connected if  the restriction of ${\bf g}$ 
to each block $q_j$ is a connected graph. 
The {\em bond lattice} $L_{\bf g}$ is defined as the set of all connected partitions of ${\bf g}$,
partially ordered by refinement.
It is a ranked lattice with the rank of $Q$ given by $|V|-|Q|$ where $|Q|$ is the number of blocks in $Q$.
In the following statement $\hat{0}$ is the (unique) minimal element of $L_{\bf g}$ (
i.e. the partition of $V$ into singletons), $\mu$ is the M\"{o}bius function of $L_{\bf g}$, and $\lambda(Q)$ is the integer partition given by the sizes of the blocks in $Q$.

\begin{proposition}[\cite{Sta95}] \label{prop:omegap}
For any ${\bf g}\in{\mathcal G}_n$ we have
\begin{equation}
    \Psi_\bullet({\bf g}) = \sum_{Q \in L_{\bf g}} \mu(\hat{0}, Q) p_{\lambda(Q)}.
    \label{eq:csf_pn}
\end{equation}
Since the interval $[\hat{0}, Q]$ is a geometric lattice, the power sum expansion of $\omega(\Psi_\bullet({\bf g}))$ is positive.
\end{proposition}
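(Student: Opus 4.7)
The plan is to expand $\Psi_\bullet({\bf g})$ by inclusion--exclusion over the edge set, group the resulting terms by the partition of $V$ into connected components, identify the coefficient of each $p_{\lambda(Q)}$ as $\mu(\hat 0, Q)$ via Möbius inversion inside $L_{\bf g}$, and then read off the $\omega$-positivity from Rota's sign theorem for geometric lattices.

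First, I would start from the definition
$$\Psi_\bullet({\bf g})=\sum_{\kappa\colon V\to\mathbb{Z}_{>0}\text{ proper}}\prod_{v\in V}x_{\kappa(v)}$$
and drop the properness condition by writing
$$\prod_{\{u,v\}\in E}\bigl(1-[\kappa(u)=\kappa(v)]\bigr)=\sum_{S\subseteq E}(-1)^{|S|}\prod_{\{u,v\}\in S}[\kappa(u)=\kappa(v)].$$
Multiplying by $\prod_v x_{\kappa(v)}$, summing over all $\kappa\colon V\to\mathbb{Z}_{>0}$, and interchanging the order of summation gives
$$\Psi_\bullet({\bf g})=\sum_{S\subseteq E}(-1)^{|S|}\,p_{\lambda(\pi(S))},$$
where $\pi(S)$ denotes the partition of $V$ into connected components of the spanning subgraph $(V,S)$; each block $B$ of $\pi(S)$ contributes the factor $\sum_i x_i^{|B|}=p_{|B|}$.

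Next I would collect terms by $Q=\pi(S)\in L_{\bf g}$, which always lies in $L_{\bf g}$ since components of $(V,S)$ are connected in ${\bf g}$. Writing $E(Q)$ for the set of edges of ${\bf g}$ whose endpoints lie in a common block of $Q$, one has $\pi(S)\leq Q$ in $L_{\bf g}$ if and only if $S\subseteq E(Q)$. Setting $g(Q):=\sum_{S\colon\pi(S)=Q}(-1)^{|S|}$ gives $\Psi_\bullet({\bf g})=\sum_{Q\in L_{\bf g}}g(Q)\,p_{\lambda(Q)}$, and to identify $g(Q)$ with $\mu(\hat 0,Q)$ I would verify
$$\sum_{Q'\leq Q}g(Q')=\sum_{S\subseteq E(Q)}(-1)^{|S|}=\begin{cases}1 & \text{if } E(Q)=\emptyset,\\ 0 & \text{otherwise.}\end{cases}$$
Because every non-trivial block of $Q\in L_{\bf g}$ is a connected induced subgraph of ${\bf g}$ containing at least one edge, $E(Q)=\emptyset$ occurs exactly when $Q=\hat 0$. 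Möbius inversion inside $L_{\bf g}$ then yields $g(Q)=\mu(\hat 0,Q)$, establishing \eqref{eq:csf_pn}.

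For the $\omega$-positivity: the hypothesis that each interval $[\hat 0,Q]$ is a geometric lattice of rank $n-|Q|$ combined with Rota's sign theorem gives $\mu(\hat 0,Q)=(-1)^{n-|Q|}|\mu(\hat 0,Q)|$. On the other hand $\omega(p_{\lambda(Q)})=(-1)^{n-|Q|}p_{\lambda(Q)}$, since $\omega(p_k)=(-1)^{k-1}p_k$ and the block sizes of $Q$ sum to $n$. Applying $\omega$ to \eqref{eq:csf_pn} therefore yields
$$\omega(\Psi_\bullet({\bf g}))=\sum_{Q\in L_{\bf g}}|\mu(\hat 0,Q)|\,p_{\lambda(Q)},$$
which is manifestly $p$-positive. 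The main obstacle is the Möbius-inversion step: one must justify the identity $\sum_{Q'\leq Q}g(Q')=[Q=\hat 0]$ inside the \emph{sublattice} $L_{\bf g}$ rather than the full partition lattice of $V$, which hinges on the observation that every partition of the form $\pi(S)$ automatically lies in $L_{\bf g}$. Once that is in hand, the remainder is sign bookkeeping and a single invocation of Rota's theorem.
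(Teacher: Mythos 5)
Your proof is correct, and since the paper states this result with only a citation to Stanley's original article rather than proving it, there is no in-paper argument to diverge from: your inclusion--exclusion over edge subsets, the grouping by $\pi(S)$, the M\"obius inversion inside $L_{\bf g}$ (correctly justified by noting that every $\pi(S)$ lies in $L_{\bf g}$ and that $E(Q)=\emptyset$ only for $Q=\hat 0$), and the sign computation via Rota's theorem together constitute exactly the standard proof from \cite{Sta95}. The one step you take as a hypothesis --- that $[\hat 0,Q]$ is geometric --- is asserted in the statement itself, and follows from the isomorphism of that interval with a product of bond lattices of connected graphs, i.e.\ lattices of flats of graphic matroids.
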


\begin{definition}\label{def:h-alt}[positively h-alternating]
Given a homogeneous symmetric function $f$ of degree $n$, we say that $f$ is {\sl positively $h$-alternating} if the coefficient of the 
homogeneous symmetric function $(-1)^{n-\ell(\lambda)}h_{\lambda}$ is positive for every $\lambda\vdash n$. Here $\ell(\lambda)$ denotes the number of parts of $\lambda$.
We usually just say $h$-alternating and assume it must be positively alternating.
\end{definition}

We can use the Proposition~\ref{prop:omegap} to show that for any graphs, the chromatic symmetric function $\Psi_\bullet({\bf g})$ is $h$-alternating.

\begin{lemma}
Let $f$ be a homogeneous symmetric function of degree $n$.
If $\omega(f)$ is $p$-positive, then $f$ is $h$-alternating.
\label{lem:palt}
\end{lemma}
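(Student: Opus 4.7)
The plan is to pass through the involution $\omega$ and then analyze the sign of the transition coefficients from the power-sum basis to the complete homogeneous basis. Writing the hypothesis as $\omega(f) = \sum_{\mu \vdash n} c_\mu p_\mu$ with $c_\mu \ge 0$, I would apply $\omega$ again and use $\omega^2 = \id$ together with $\omega(p_\mu) = (-1)^{n-\ell(\mu)} p_\mu$ to obtain
\[
 f = \sum_{\mu \vdash n} c_\mu (-1)^{n-\ell(\mu)} p_\mu.
\]
So being $h$-alternating will follow as soon as I pin down the sign of $[h_\lambda]\,p_\mu$ for each $\mu,\lambda \vdash n$.

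The key technical point is the claim that $(-1)^{\ell(\lambda)-\ell(\mu)}\,[h_\lambda]\,p_\mu \ge 0$ for every $\mu,\lambda\vdash n$. The cleanest route I see is the classical generating-function identity
\[
 \sum_{n\ge 0} h_n z^n \;=\; \exp\!\Bigl(\sum_{k\ge 1} \tfrac{p_k}{k}\, z^k\Bigr).
\]
Taking $\log$ on both sides and expanding $\log(1+u) = \sum_{m\ge 1}(-1)^{m-1} u^m/m$, then extracting the coefficient of $z^k$, shows that $[h_\lambda]\,p_k$ is a nonnegative rational multiple of $(-1)^{\ell(\lambda)-1}$. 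Since $p_\mu = \prod_i p_{\mu_i}$, the $h_\lambda$-coefficient of $p_\mu$ is a sum of products of such coefficients, one per factor $p_{\mu_i}$, where each factor contributes parts that concatenate and sort to $\lambda$. The signs multiply to $\prod_i (-1)^{\ell(\lambda^{(i)})-1} = (-1)^{\ell(\lambda)-\ell(\mu)}$, which depends only on $\mu$ and $\lambda$ and not on how the parts of $\lambda$ are distributed across the factors. Hence no cancellation can occur and the claim follows.

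Combining the two observations,
\[
 (-1)^{n-\ell(\lambda)}\,[h_\lambda]\,f \;=\; \sum_{\mu} c_\mu\, (-1)^{\ell(\lambda)+\ell(\mu)}\,[h_\lambda]\,p_\mu \;\ge\; 0,
\]
which is exactly the $h$-alternating condition. The only real obstacle is the sign analysis of $[h_\lambda]\,p_\mu$; once that identity is in hand, the rest is routine bookkeeping with $\omega$.
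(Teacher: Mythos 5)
Your proof is correct and follows essentially the same route as the paper's: both arguments reduce to the fact that the transition coefficients from the power sums to the $e$-basis (equivalently, via $\omega$, the $h$-basis) have sign determined solely by $\ell(\lambda)-\ell(\mu)$, so that multiplicativity and the nonnegativity of the $p$-expansion coefficients preclude any cancellation. The only cosmetic difference is that you establish this sign fact on the $h$-side by taking the logarithm of $\sum_{n\ge 0} h_n z^n = \exp\bigl(\sum_{k\ge 1} p_k z^k/k\bigr)$, whereas the paper reads it off the determinantal $p$-to-$e$ formula and conjugates by $\omega$ at the very end.
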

\begin{proof}
Recall that $\omega$ is an involution that maps $\omega(e_{\lambda}) = h_{\lambda}$.
Recall also that
\begin{equation}
p_n =  \det 
\begin{pmatrix}
e_1 &1 &0 &\cdots &0\\
2e_2 &e_1 &1  &\cdots &0\\
3e_3 &e_2 &e_1 &\cdots &0\\
\vdots &\vdots  & \ddots & \ddots & \vdots\\
ne_n & e_{n-1} & e_{n-2} & \cdots & e_1
\end{pmatrix}.
\label{eq:pe}
\end{equation}
Hence, $p_n$ is $e$-alternating in the sense that the coefficient of $(-1)^{n-\ell(\mu)}e_{\mu}$ is positive. 
Alternating is a multiplicative property, thus we then have that $p_{\lambda}$ is e-alternating. 
If $\omega(f)$ is $p$-positive, then no cancellation occur in $\omega(f)$ when expanded in the $e$ basis.
Hence, $\omega(f)$ is $e$-alternating. Applying $\omega$ again gives the desired result. 
\end{proof}

A direct consequence of this lemma and Proposition~\ref{prop:omegap} is the following.
\begin{corollary}
 For any graph ${\bf g}\in {\mathcal G}$, the chromatic symmetric function $\Psi_\bullet({\bf g})$ is $h$-alternating.\end{corollary}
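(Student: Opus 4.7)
The plan is to chain the two results we have just established. By Proposition~\ref{prop:omegap}, for any graph ${\bf g} \in \mathcal{G}$ we have
\[\omega(\Psi_\bullet({\bf g})) = \sum_{Q \in L_{\bf g}} \mu(\hat{0}, Q)\, \omega(p_{\lambda(Q)}),\]
and the proposition tells us that this is $p$-positive (the sign $(-1)^{n-\ell(\lambda(Q))}$ introduced by $\omega$ is absorbed because each interval $[\hat{0},Q]$ in the bond lattice is geometric, so $\mu(\hat{0}, Q)$ has the matching sign).

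Next, I would simply invoke Lemma~\ref{lem:palt} with $f = \Psi_\bullet({\bf g})$. The hypothesis of the lemma — that $\omega(f)$ is $p$-positive — is exactly what Proposition~\ref{prop:omegap} provides. The conclusion is that $\Psi_\bullet({\bf g})$ is $h$-alternating, which is precisely the statement of the corollary.

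There is no real obstacle here; the corollary is an immediate synthesis of Proposition~\ref{prop:omegap} and Lemma~\ref{lem:palt}. The only thing to double-check is that the sign conventions in the definition of $h$-alternating (Definition~\ref{def:h-alt}), in the statement of Proposition~\ref{prop:omegap}, and in the chain $p$-positive $\Rightarrow$ $e$-alternating $\Rightarrow$ $h$-alternating used inside Lemma~\ref{lem:palt}, all line up consistently — but this was already verified in the proof of the lemma. Hence the proof reduces to a one-line citation of both results.
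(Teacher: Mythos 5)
Your proof is correct and is exactly the argument the paper uses: the corollary is stated there as ``a direct consequence'' of Proposition~\ref{prop:omegap} (which gives the $p$-positivity of $\omega(\Psi_\bullet({\bf g}))$ via the M\"obius function of the geometric bond lattice) combined with Lemma~\ref{lem:palt}. Nothing further is needed.
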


\begin{remark}
Lemma~\ref{lem:palt} shows that the condition that $f$ is positively $h$-alternating can be thought of as a relaxation of $\omega(f)$ being $p$-positive.
We have shown above that for graphs, $\Psi_\bullet({\bf g})$ is $h$-alternating using Stanley's results.
But if one considers the Hopf algebra of hypergraphs (see~\cite{BB2017,Gru2016}) the situation is quite different.
The positivity result of Stanley is not true anymore for all hypergraphs $\bf h$ (see~\cite{stanley98}),
yet some $\Psi_\bullet({\bf h})$ are still $h$-alternating. Consider the simple example ${\bf h} = \{\{1,2,3\}, \{1,2,4,5\}\},$ the hypergraph on vertices $\{1,2,3,4,5\}$ with a two hyperedges $\{1,2,3\}$ and $\{1,2,4,5\}$.
In this case $\Psi_\bullet({\bf h}) = p_{1^5} - p_{3,1,1} - p_{4,1} +p_5$ and $\omega\big(\Psi_\bullet({\bf h})\big) = p_{1^5} - p_{3,1,1}+p_{4,1}+p_5$ is not $p$-positive. Yet $\Psi_\bullet({\bf h}) =2h_{1^5} - 6h_{2,1^3} + 7h_{2,2,1} + 6h_{3, 1, 1} -
5h_{3,2} - 9h_{4,1} + 5h_5$ is positively $h$-alternating. We will see in Theorem~\ref{thm:nablapos} why we care about $h$-alternating.
It appears difficult to find conditions on hypergraph to obtain an $h$-alternating symmetric function. 
We can convince ourselves that a necessary condition is that the edges of ${\bf h}$ are all of even size and all pairwise intersections are odd.
But this is not sufficient as seen with the example 
${\bf h} =  \{\{1,2,3,4\},\{4,5,6,7\},\{1,7,8,9\}\}$ and 
$\Psi_\bullet({\bf h}) = p_{1^9}-3p_{41^5}+3p_{7,1^2}-p_9$ which is not $\omega(p)$-positive nor $h$-alternating.
We propose the following condition on an hypergraph $\bf h$: for any edge $E\in {\bf h}$ and any contraction of the hypergraph ${\bf h}/S$ (with respect to a subset of edges $S\subseteq {\bf h}$), the cardinality of $E/S$ is even or 1.
It appears that in this case $\Psi_\bullet({\bf h})$ is $\omega(p)$-positive. This does not include the example above that is $h$-alternating but not $\omega(p)$-positive.
\end{remark}

Later we will study other kinds of invariants. In particular, the invariants defined by $\Psi_{21}\colon {\mathcal S}\to Sym$, $\Psi_{\Ptwo}\colon {\mathcal P}\to Sym$ and 
$\Psi_{\GGtwo}\colon \overline{\mathcal G}\to Sym$. Restricting ourselves to  permutations ${\mathcal S}$ gives us a small sampling of the invariants for graphs ${\mathcal G}$. 
However, it gives us enough of an idea along with quick verification by computer for understanding of some general phenomena.

\subsection{Chromatic polynomials} Before we start our study of new invariants, let us recall a few facts about the chromatic polynomial. Recall that for a graph ${\bf g}\in \mathcal G_n $ we have that $\chi_{{\bf g}}(t)=\phi_t\circ \Psi_\bullet({\bf g})$ is the classical chromatic polynomial of ${\bf g}$. In this paper we are also interested in the properties of other polynomials like 
$\chi_{\GGtwo,{\bf g}}(t)=\phi_t\circ \Psi_{\GGtwo}({\bf g})$. Let us recall some known properties of the classical chromatic polynomial $\chi_{{\bf g}}(t)$. Let
  $$\chi_{{\bf g}}(-t)=a_0 + a_1t +\cdots + a_nt^n$$
The following is a reformulation of an old theorem of Whitney:
\begin{proposition}[\cite{Whitney}] The polynomial $\chi_{{\bf g}}(-t)$ is positive, that is $a_i\ge 0$. Moreover $a_i>0$ for $i=0,1,...,n$ and it  counts  non-brocken circuits in the graph $\bf g$.
\label{prop:Whitney}
\end{proposition}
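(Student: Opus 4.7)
The plan is to prove Whitney's theorem by passing to the subgraph expansion of the chromatic polynomial and then carrying out the classical Broken Circuit sign-reversing involution. Fix a total order $e_1 < e_2 < \cdots < e_m$ of the edges of $\bf g$. Recall that a \emph{broken circuit} is a set of the form $C \setminus \{e\}$ where $C$ is a cycle of $\bf g$ and $e$ is the maximum edge of $C$ in the chosen order. Write $n$ for the number of vertices of $\bf g$. First I would recall (or prove by inclusion-exclusion on proper colorings) the subgraph expansion
\[
\chi_{\bf g}(t) = \sum_{S\subseteq E({\bf g})} (-1)^{|S|}\, t^{c(V,S)},
\]
where $c(V,S)$ is the number of connected components of the spanning subgraph with edge set $S$.

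Next I would construct the sign-reversing involution $\iota$ on the collection of edge subsets $S$ that contain at least one broken circuit. Given such $S$, let $B$ be the broken circuit contained in $S$ whose ``missing'' maximum edge $e(B)$ is as small as possible in the edge order (breaking ties by taking $B$ itself lexicographically smallest); set $\iota(S) = S \triangle \{e(B)\}$. The key technical step is to verify that $\iota$ is well-defined and involutive: adding or removing $e(B)$ does not create any smaller-missing broken circuit, because any such would have been available before toggling $e(B)$. I would also check that toggling $e(B)$ does not change $c(V,S)$, since $e(B)$ closes the cycle $B\cup\{e(B)\}$ and thus its endpoints are already in the same component via $B\subseteq S$. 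Hence the two terms cancel, and the sum collapses to
\[
\chi_{\bf g}(t) = \sum_{\substack{S\subseteq E \\ S\text{ broken-circuit-free}}} (-1)^{|S|}\, t^{c(V,S)}.
\]

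Now comes the easy but important observation: a broken-circuit-free set $S$ is acyclic, because any cycle $C\subseteq S$ would contain the broken circuit $C\setminus\{\max C\}\subseteq S$. Therefore $c(V,S) = n - |S|$ for every such $S$, giving
\[
\chi_{\bf g}(t) \;=\; \sum_{k\ge 0} (-1)^k N_k\, t^{n-k},
\]
where $N_k$ counts the broken-circuit-free subsets of $E({\bf g})$ of size $k$. Substituting $t\mapsto -t$ and multiplying by $(-1)^n$ gives $(-1)^n\chi_{\bf g}(-t) = \sum_k N_k t^{n-k}$, so the coefficients $a_i$ of $\chi_{\bf g}(-t)$ are, up to the global sign $(-1)^n$, exactly the non-negative integers $N_{n-i}$, and they count broken-circuit-free (``non-broken-circuit'') edge subsets, as claimed.

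Finally, for the strict positivity $a_i>0$ for $0\le i\le n-1$ (assuming $\bf g$ connected), I would observe that any spanning tree of $\bf g$ is broken-circuit-free (forests contain no cycles, hence no broken circuits), and so is every subset of a spanning tree; this yields broken-circuit-free sets of every cardinality $k$ with $0\le k\le n-1$, so $N_k>0$ for these $k$ and $N_0 = 1$ accounts for the top coefficient. The only genuinely delicate step of the whole argument is verifying that the involution $\iota$ is well-defined — specifically that the choice of ``smallest missing maximum edge'' broken circuit is stable under toggling $e(B)$ — which is where one must be careful about how broken circuits interact.
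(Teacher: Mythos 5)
Your proof is correct and is exactly the classical broken-circuit argument behind the cited result; the paper itself offers no proof of this proposition (it is stated as a reformulation of Whitney's theorem with a citation), so there is nothing internal to compare against. The subgraph expansion, the sign-reversing involution toggling the minimal ``missing maximum edge'', the observation that broken-circuit-free sets are forests (so $c(V,S)=n-|S|$), and the spanning-tree argument for strict positivity are all sound, and your justification that toggling $e(B)$ only creates or destroys broken circuits with strictly larger missing edge is the right reason the involution is well defined.

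Two small points of bookkeeping. First, your derivation gives $\chi_{\bf g}(-t)=(-1)^n\sum_k N_k\,t^{n-k}$, so the literal coefficients $a_i$ are $(-1)^n N_{n-i}$; you correctly flag the global sign $(-1)^n$, which the paper's statement silently suppresses. Second, in the last paragraph there is an index slip: $N_k>0$ for $0\le k\le n-1$ (for $\bf g$ connected) translates to $a_i\neq 0$ for $1\le i\le n$, not $0\le i\le n-1$; indeed $a_0=\pm N_n=0$ since no forest on $n$ vertices has $n$ edges (equivalently $\chi_{\bf g}(0)=0$). This actually shows the proposition as printed is imprecise in claiming $a_i>0$ down to $i=0$; the correct range is $i=c,\dots,n$ where $c$ is the number of connected components. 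Neither issue affects the substance of your argument.
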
 

More recently Huh has shown that the sequence of integers $a_0,a_1,\ldots, a_n$ forms a log-concave sequence.
\begin{theorem}[\cite{Huh}]  \label{prop:uni}The sequence of integers $a_0,a_1,\ldots, a_n$ is log-concave, and hence unimodal.
\end{theorem}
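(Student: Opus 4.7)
The plan is to reduce the statement to a log-concavity result about intersection numbers on an algebraic variety associated to the graph, via the realizability of graphic matroids, and then invoke the Hodge index theorem (or its consequence, the Khovanskii--Teissier inequality).

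First I would rewrite $\chi_{\bf g}(t)$ in terms of the characteristic polynomial of the graphic matroid $M({\bf g})$. Writing $\chi_{\bf g}(t) = t^{c({\bf g})} \, \bar\chi_{M({\bf g})}(t)$, where $c({\bf g})$ is the number of connected components, reduces the problem to showing that the (unsigned) coefficients of $\bar\chi_{M({\bf g})}(t)$ form a log-concave sequence; the powers of $t$ factored off only shift the sequence, which preserves log-concavity. The graphic matroid is representable over any field via the signed incidence matrix, so one may fix a representation and associate to it a hyperplane arrangement $\mathcal{A} \subset k^r$ whose complement encodes proper colorings.

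The key step is to realize the coefficients $a_i$ as intersection numbers on a smooth projective variety. Following Huh, embed the complement $U = k^r \setminus \bigcup \mathcal{A}$ into $\mathbb{P}^r \times \mathbb{P}^r$ by $x \mapsto \bigl(x,\, [\ell_1(x):\cdots:\ell_n(x)]\bigr)$ where the $\ell_i$ cut out the hyperplanes, and let $Y$ be the closure. The projections induce two nef classes $\alpha, \beta$ on a suitable resolution (e.g., the wonderful compactification of De Concini--Procesi), and a direct computation identifies
\[
a_i \;=\; \deg(\alpha^i \cdot \beta^{\,d-i})
\]
up to a global normalization, where $d$ is the appropriate dimension. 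Now the Khovanskii--Teissier inequality, a standard consequence of the Hodge index theorem for surfaces (applied to generic pencils), states that for any two nef classes on a projective variety the sequence of mixed intersection numbers $\deg(\alpha^i \cdot \beta^{\,d-i})$ is log-concave in $i$. This yields $a_i^2 \geq a_{i-1} a_{i+1}$, and unimodality follows from log-concavity of a sequence of positive integers, the positivity being Proposition~\ref{prop:Whitney}.

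The main obstacle is the intersection-theoretic identification of $a_i$ with mixed degrees $\deg(\alpha^i \cdot \beta^{\,d-i})$: one must check that the closure $Y$ is well-behaved (smoothness or at least rational equivalence of cycles), that $\alpha$ and $\beta$ are genuinely nef, and that the combinatorial expansion of $\chi_{\bf g}(-t)$ via broken circuits (Whitney) matches the cohomological computation of these intersection numbers on $Y$. Once that bridge is built, the Hodge-theoretic inequality does the work, and the hard combinatorics is replaced by a positivity statement in algebraic geometry.
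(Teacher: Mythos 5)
This statement is not proved in the paper at all: it is quoted verbatim as a theorem of Huh (the citation \cite{Huh}), and the authors use it only as a black box. So there is no internal proof to compare against; what you have written is, in effect, an outline of Huh's own published argument (as later streamlined by Huh--Katz), and on that level your strategy is the correct one: pass to the characteristic polynomial of the graphic matroid, realize the coefficients as mixed intersection numbers of two nef classes on a compactification of the arrangement complement, and invoke the Khovanskii--Teissier inequality.

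As a proof, however, the proposal is not complete, and you acknowledge as much in your final paragraph. The entire mathematical content of Huh's theorem lives in the ``bridge'' you defer: proving that the coefficients of the (reduced) characteristic polynomial equal the degrees $\deg(\alpha^i \cdot \beta^{\,d-i})$, and that $\alpha,\beta$ are nef on a suitable resolution. Without that identification the Hodge-theoretic inequality has nothing to act on. Two smaller points you should also repair if you flesh this out. First, the intersection-theoretic statement applies to the \emph{reduced} characteristic polynomial $\bar\chi_{M}(t)/(t-1)$; to recover log-concavity of the full coefficient sequence $a_0,\dots,a_n$ you must additionally use that multiplying by $(t+1)$ (after the sign change $t\mapsto -t$) preserves log-concavity, which requires knowing the sequence is nonnegative with no internal zeros --- this is exactly where Proposition~\ref{prop:Whitney} enters, and it deserves to be said explicitly rather than only at the unimodality step. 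Second, the embedding should use the reciprocal (or gradient) map $x \mapsto \bigl(x, [1/\ell_1(x):\cdots:1/\ell_n(x)]\bigr)$ rather than the linear forms themselves; with the map as you wrote it the second projection does not produce the correct mixed degrees. None of this changes the verdict that you have located the right theorem and the right proof architecture, but as written the argument is a roadmap to Huh's paper rather than a proof.
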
 

It would be interesting to see if this is true for the other graph polynomial invariants we introduce here.
It is interesting to note that Adiprasito, Huh and Katz~\cite{AHKannals}
settled a long standing conjecture of Heron, Rota, and Welsh by extending the results of Theorem~\ref{prop:uni} to matroids.

\section{The invariant $\Psi_{21}\colon {\mathcal S}\to Sym$ and $\Psi_{\scriptscriptstyle \bullet - \bullet}\colon \overline{{\mathcal G}}\to Sym$ }\label{s:inv21}
We consider now the invariant $\Psi_{21}$.
We assume here that $n$ is an even integer.
For any permutation $\alpha$ of size $n$, $\Psi_{21}(\alpha)$ is defined as
\begin{equation}\label{eq:Pda-def}
\Pda=\sum_{a\models n}c_a^{21}(\alpha) M_a
\end{equation}
where the sum is taken over compositions $a$ of $n$ with even parts, 
and $c_a^{21}(\alpha)$ is the number of ways to decompose $\alpha$ 
into $21^*$-subsequences of type $a$, as defined in \cite{BB2017}:
$$
c_a^{21}(\alpha)=|\{A\models n\, : \text{for } 1\le i\le \ell,\  |A_i|=2a_i\ {\rm and}\ st(\alpha|A_i)=2143\dots(2a_i)(2a_i-1)\}| 
$$
when $a=(2a_1,\dots,2a_\ell)$. 

We now give a combinatorial result that relates the invariant $\Psi_{21}$ to Stanley's 
chromatic symmetric function $\Psi_1$. To this end, for any symmetric function $F(\mathbf{x})$ and positive integer $d$,
we denote by $F(\mathbf{x}^d)$ the symmetric function obtained by substituting each variable
$x_i$ of $\mathbf{x}$ by $x_i^d$.
Now, for any graph $\mathbf{g}$, any perfect matching $\pi$ of $\mathbf{g}$, we denote by $\mathbf{g}_{\downarrow\pi}$ 
the graph obtained by contracting the edges of $\pi$ in $\mathbf{g}$ (the order of contractions is of no importance).
\begin{proposition}\label{prop:contraction}
Let $n$ be an even integer and $\alpha$ a permutation of size $n$.
The symmetric function $\Pda$ satisfies
\begin{equation}\label{eq:Pda-prop}
\Pda=\sum_\pi \Psi_{\bullet}((\mathbf{g}_\alpha)_{\downarrow\pi})(\mathbf{x}^2)
\end{equation}
where $\mathbf{g}_\alpha$ is the incomparability graph of the poset $P_\alpha$ and the sum is taken over the set
of perfect matchings $\pi$ of $\mathbf{g}_\alpha$.
\end{proposition}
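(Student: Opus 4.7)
The plan is to interpret both sides of \eqref{eq:Pda-prop} as weighted sums over structures derived from $\mathbf{g}_\alpha$ and to match them term by term. The heart of the argument is the following combinatorial equivalence, which I would isolate as a lemma: a subset $A = \{i_1 < i_2 < \cdots < i_{2k}\} \subseteq [n]$ is a $21^*$-subsequence of $\alpha$ (that is, $st(\alpha|_A) = 2143\cdots(2k)(2k-1)$) if and only if the subgraph of $\mathbf{g}_\alpha$ induced on $A$ is a perfect matching, necessarily equal to $\{\{i_1,i_2\},\{i_3,i_4\},\ldots,\{i_{2k-1},i_{2k}\}\}$.

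I would prove the forward direction by direct inspection of the pattern $2143\cdots(2k)(2k-1)$: positions $i_{2j-1}, i_{2j}$ form an inversion and are therefore incomparable in $P_\alpha$, while every other pair $(i_a, i_b)$ with $a<b$ satisfies $\alpha(i_a)<\alpha(i_b)$ and is comparable. For the reverse direction, I would induct on $k$. Let $p \in A$ minimize $\alpha$ on $A$; then $p$ is incomparable with precisely those positions of $A$ that lie strictly to its left, so the degree of $p$ in $\mathbf{g}_\alpha|_A$ equals the number of elements of $A$ less than $p$. The matching condition forces this number to be $1$, giving $p = i_2$ with match $i_1$. An analogous degree count at $i_1$ then forces $\alpha(i_1) < \alpha(i_b)$ for every $b \geq 3$, so $\alpha(i_1)$ and $\alpha(i_2)$ are the two smallest values of $\alpha$ on $A$. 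Applying the induction hypothesis to $A \setminus \{i_1,i_2\}$ recovers the full pattern $2143\cdots(2k)(2k-1)$ on $A$.

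Armed with this lemma, I would unfold $c_a^{21}(\alpha)$ in definition \eqref{eq:Pda-def}: the sum $\sum_a c_a^{21}(\alpha) M_a$ becomes $\sum_{(A_1,\ldots,A_\ell)} M_{(|A_1|,\ldots,|A_\ell|)}$, where the outer sum is over set compositions of $[n]$ whose blocks are $21^*$-subsequences. By the lemma, such a set composition is the same data as a perfect matching $\pi$ of $\mathbf{g}_\alpha$ (obtained as $\bigsqcup_i \pi|_{A_i}$) together with a set composition $(\tilde A_1,\ldots,\tilde A_\ell)$ of the edges of $\pi$, viewed as vertices of $\mathbf{g}_{\downarrow\pi}$, into independent sets of $\mathbf{g}_{\downarrow\pi}$. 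The independent-set condition encodes precisely that $\mathbf{g}_\alpha|_{A_i}$ has no edges beyond the matching $\pi|_{A_i}$.

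Finally, I would combine this with the monomial expansion $\Psi_\bullet(G) = \sum_{(\tilde A_1,\ldots,\tilde A_\ell)} M_{(|\tilde A_1|,\ldots,|\tilde A_\ell|)}$, where the sum runs over set compositions of $V(G)$ into independent sets, the substitution identity $M_{(\alpha_1,\ldots,\alpha_\ell)}(\mathbf{x}^2) = M_{(2\alpha_1,\ldots,2\alpha_\ell)}$, and the equality $|A_i| = 2|\tilde A_i|$. Applying these to $G = \mathbf{g}_{\downarrow\pi}$ and summing over all perfect matchings $\pi$ reassembles exactly the right-hand side of \eqref{eq:Pda-prop}. The main obstacle is the reverse direction of the lemma; the rest of the proof is a straightforward reparametrization.
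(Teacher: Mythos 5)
Your proof is correct and follows essentially the same route as the paper's: the paper's (two-sentence) proof likewise rests on the bijection between decompositions of $\alpha$ into $21$-patterns and perfect matchings of $\mathbf{g}_\alpha$, followed by the observation that, for a fixed matching $\pi$, the remaining sum is by definition $\Psi_{\bullet}((\mathbf{g}_\alpha)_{\downarrow\pi})(\mathbf{x}^2)$. You have simply supplied the details (the induced-matching lemma and the reparametrization of set compositions into independent sets of the contracted graph) that the paper leaves implicit.
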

\begin{proof} 
Any decomposition of the permutation $\alpha$ as a shuffle of $21$ patterns is in bijection 
with a perfect matching of the graph $\mathbf{g}_\alpha$. 
For any perfect matching $\pi$ of $\mathbf{g}_\alpha$, the symmetric function that we compute 
is by definition exactly $\Psi_{\bullet}((\mathbf{g}_\alpha)_{\downarrow\pi})(\mathbf{x}^2)$.
\end{proof}

At the level of graphs, we have the same proof for the following proposition.

\begin{proposition}\label{prop:Gcontraction}
Let $n$ be an even integer and ${\bf g}\in \overline{\mathcal G}$ a graph.
The symmetric function $\Psi_{\GGtwo}({\bf g})$ satisfies
\begin{equation}\label{eq:Pda-prop}
\Psi_{\GGtwo}({\bf g})=\sum_\pi \Psi_{\bullet}({\bf g}_{\downarrow\pi})(\mathbf{x}^2)
\end{equation}
where the sum is taken over the set
of perfect matchings $\pi$ of $\bf g$.
\end{proposition}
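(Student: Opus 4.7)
The plan is to mimic the proof of Proposition~\ref{prop:contraction} at the level of graphs, expanding both sides in the monomial quasisymmetric basis and matching coefficients through a bijection. From the CHA construction,
\[
\Psi_{\GGtwo}({\bf g}) \;=\; \sum_{\alpha \models n} \zeta_{\GGtwo,\alpha}({\bf g})\, M_\alpha,
\]
and the first step is to interpret $\zeta_{\GGtwo,\alpha}({\bf g})$ combinatorially: by definition of $\zeta_{\GGtwo}$ on $\overline{\mathcal{G}}$, this coefficient counts ordered set partitions $(A_1,\dots,A_\ell)$ of $V({\bf g})$ with $|A_i|=\alpha_i$ such that each induced subgraph ${\bf g}[A_i]$ is (an isomorphism class of) a perfect matching. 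In particular, all $\alpha_i$ must be even.

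The central step is a bijection
\[
(A_1,\dots,A_\ell) \;\longleftrightarrow\; \bigl(\pi,(B_1,\dots,B_\ell)\bigr),
\]
where $\pi$ is a perfect matching of ${\bf g}$ and $(B_1,\dots,B_\ell)$ is an ordered set partition of $V({\bf g}_{\downarrow\pi})$ with $|B_i|=\alpha_i/2$ and each $B_i$ an independent set of ${\bf g}_{\downarrow\pi}$. In the forward direction, take $\pi$ to be the union of the (unique) perfect matchings of the ${\bf g}[A_i]$, and let $B_i$ be the image of $A_i$ under the contraction $V({\bf g})\to V({\bf g}_{\downarrow\pi})$; conversely, given $(\pi,(B_i))$, take $A_i$ to be the preimage of $B_i$. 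The crucial verification is that \emph{$B_i$ is independent in ${\bf g}_{\downarrow\pi}$} corresponds exactly to \emph{${\bf g}[A_i]$ has no edges beyond the matching ones}, so that ${\bf g}[A_i]$ is a pure perfect matching.

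Combining the bijection with $\Psi_\bullet({\bf g}_{\downarrow\pi}) = \sum_\beta \zeta_{1,\beta}({\bf g}_{\downarrow\pi})\, M_\beta$ and the elementary identity $M_\alpha({\bf x}) = M_{\alpha/2}({\bf x}^2)$ (valid when every part of $\alpha$ is even), I would then rewrite
\[
\Psi_{\GGtwo}({\bf g}) \;=\; \sum_\pi \sum_\beta \zeta_{1,\beta}({\bf g}_{\downarrow\pi})\, M_\beta({\bf x}^2) \;=\; \sum_\pi \Psi_\bullet({\bf g}_{\downarrow\pi})({\bf x}^2),
\]
yielding the claimed identity. The main obstacle is verifying the bijection cleanly, in particular confirming that no non-matching edges inside some $A_i$ are missed --- which is exactly what the independence hypothesis on the $B_i$ in ${\bf g}_{\downarrow\pi}$ enforces. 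Everything else is routine bookkeeping with the definitions.
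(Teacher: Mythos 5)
Your proposal is correct and is essentially the argument the paper intends: the paper's proof simply says the graph case has "the same proof" as Proposition~\ref{prop:contraction}, whose one-line justification is exactly the bijection (good ordered set partition) $\leftrightarrow$ (perfect matching, independent-set partition of the contraction) that you spell out. Your version fills in the details the paper leaves implicit — the $M_\alpha(\mathbf{x})=M_{\alpha/2}(\mathbf{x}^2)$ bookkeeping and the check that independence of $B_i$ in $\mathbf{g}_{\downarrow\pi}$ is equivalent to $\mathbf{g}[A_i]$ being a pure matching — but the route is the same.
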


This interpretation leads to the question of the $e$-positivity of $\Psi_{21}$.
Recall that the $e$-positivity conjecture is not well suited for graphs, and is not resolved for posets.
We are interested to exhibit a new phenomena here and will concentrate on permutations only.  
To be precise, since any variable $x_i$ appears in $\Pda$ with even exponents,
we may consider the symmetric function obtained by substituting each $x_i^2$ with $x_i$ 
in $\Pda$, develop the result in the basis of elementary symmetric functions,
and examine whether the coefficients are non-negative.
By abuse, we shall call this problem {\em $e$-positivity of $\Psi_{21}$}.
This question has a great similarity with original $\mathbf{3}+\mathbf{1}$ conjecture,
but we observe in our context new phenomena.

Of course if for any matching $\pi$, $(\mathbf{g}_\alpha)_{\downarrow\pi}$ is the incomparability 
graph of the poset avoiding $\mathbf{3}+\mathbf{1}$, then the $e$-positivity of $\Pda$ is predicted by Conjecture~\ref{conj:Stan-Stem}.
What is new in our context is the following: 
even if  $(\mathbf{g}_\alpha)_{\downarrow\pi}$ does not avoid the claw
\includegraphics[scale=0.2]{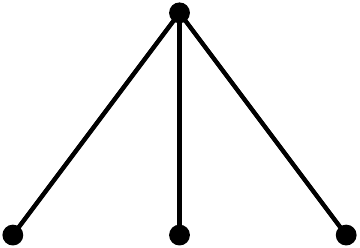}, $\Pda$ may be $e$-positive for a different reason.
This occurs when contracting matchings of the graph $(\mathbf{g}_\alpha)_{\downarrow\pi}$ present the claw pattern 
as well as the $4$-chain pattern \includegraphics[scale=0.3]{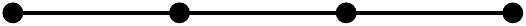}.
The reason is quite simple: the claw gives a term $-2e_{22}$, whereas the $4$-chain
gives a term $+2e_{22}$.

\begin{example} \label{ex:epos}
let $\alpha= 26153874$, we may draw the graph $G_\alpha$ as follow
  \begin{center}
    \begin{tikzpicture}
      \node  (A6) at (0,.2)  [inner sep =-2pt]  {$\scriptstyle\bullet$};
      \node  (A4) at (0,-.2) [inner sep =-2pt]  {$\scriptstyle\bullet$};
      \node  (A2) at (2,1.6) [inner sep =-2pt]  {$\scriptstyle\bullet$};
      \node  (A1) at (2,1.2) [inner sep =-2pt]  {$\scriptstyle\bullet$};
      \node  (A5) at (2,.2) [inner sep =-2pt]  {$\scriptstyle\bullet$};
      \node  (A3) at (2,-.2) [inner sep =-2pt]  {$\scriptstyle\bullet$};
      \node  (A8) at (2,-1.2) [inner sep =-2pt]  {$\scriptstyle\bullet$};
      \node  (A7) at (2,-1.6) [inner sep =-2pt]  {$\scriptstyle\bullet$};
      \node[right = 1pt of A1] {$\scriptscriptstyle 1$};
      \node[right = 1pt of A2] {$\scriptscriptstyle 2$};
      \node[right = 1pt of A3] {$\scriptscriptstyle 3$};
      \node[left = 1pt of A4] {$\scriptscriptstyle 4$};
      \node[right = 1pt of A5] {$\scriptscriptstyle 5$};
      \node[left = 1pt of A6] {$\scriptscriptstyle 6$};
      \node[right = 1pt of A7] {$\scriptscriptstyle 7$};
      \node[right = 1pt of A8] {$\scriptscriptstyle 8$};
      \draw[color=red, densely dotted,thick] (A6) -- (A4);
      \draw[color=red, densely dotted,thick] (A2) -- (A1);
      \draw[color=red, densely dotted,thick] (A5) -- (A3);
      \draw[color=red, densely dotted,thick] (A8) -- (A7);
      \draw[thick] (A6) -- (A1);
      \draw[color=blue,  dashed,thick] (A6) -- (A3);
      \draw[thick] (A6) -- (A5);
      \draw[color=blue,  dashed,thick] (A5) -- (A4);
      \draw[thick] (A8) -- (A4);
     \end{tikzpicture}
  \end{center}
  For this graph, there are two possible perfect matchings: $\pi_1=\{\red{21},\red{53},\red{64},\red{87}\}$ indicated in dotted red in $\mathbf{g}_\alpha$,
  or we can replace $\red{53},\red{64}$ by $\blue{54},\blue{63}$ and have $\pi_2=\{\red{21},\blue{54},\blue{63},\red{87}\}$. The graph 
  $(\mathbf{g}_\alpha)_{\downarrow\pi_1}$ is a claw and $(\mathbf{g}_\alpha)_{\downarrow\pi_2}$ is a 4-chain. The sum of the two chromatic symmetric functions gives us 
  $e_{211} + 7e_{31} + 8e_4$.
\end{example}

A computer exploration gives us  exactly 32 permutations $\alpha$ of size $n=8$ which give a claw after contracting a perfect matching, but also still exhibit unexpected $e$-positivity of $\Pda$ like shown in Example~\ref{ex:epos}. These lie outside the conjectural $e$-positivity for $\mathbf{3}+\mathbf{1}$ avoiding posets, and we find it interesting that this larger family of posets coming from permutations has an $e$-positive invariant from a CHA.
Since there is no known interest for such study at this point, we will leave further exploration of this to the interested reader.

We now turn to a result analogous to Proposition \ref{prop:omegap}.
Let ${\bf g}\in \overline{\mathcal G}$ be a graph with vertex set $V$.
We assume $|V|$ is even because otherwise the invariant is equal to zero.
We define the \emph{($\GGtwo$)-bond poset} $L_{\bf g}^{\GGtwo}$ of ${\bf g}$. 
A set partition $K$ of $V$ is in $L_{\bf g}^{\GGtwo}$ if and only if $K$ coarsens a partition of $V$ given by a perfect matching
and the graph restricted on each block of $K$ is connected. 
The poset $L_{\bf g}^{\GGtwo}$ is ordered by refinement, and its minimal elements are set partitions induced by perfect matchings
of $\mathbf{g}$.
Next we define the function $\nu_{\bf g}^{\GGtwo}$ on $L_{\bf g}^{\GGtwo}$ as follows.
For any minimal element of $\pi \in L_{\bf g}^{\GGtwo}$ we set 
$\nu_{\bf g}^{\GGtwo}(\pi)=1$. 
For any other element $K\in L_{\bf g}^{\GGtwo}$, we define $\nu_{\bf g}^{\GGtwo}(K)$ by the condition that 
$$\sum_{K'\le K} \nu_{\bf g}^{\GGtwo}(K')=0$$
where the sum is other the elements smaller or equal to $K$ in $L_{\bf g}^{\GGtwo}$.
Now for $K\in L_{\bf g}^{\GGtwo}$, let us define $\lambda(K)$
as the integer partition given by the sizes of the blocks of $K$.
\begin{proposition} \label{prop:bond-21}
For any ${\bf g}\in{\overline{\mathcal G}}_n$, we have
\begin{equation}
   \Psi_{\GGtwo}({\bf g}) = \sum_{K \in L_{\bf g}^{\GGtwo}} \nu_{\bf g}^{\GGtwo}(K)\, p_{\lambda(K)}.
    \label{eq:csf}
\end{equation}
\end{proposition}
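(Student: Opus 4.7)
The plan is to combine Proposition~\ref{prop:Gcontraction} (which writes $\Psi_{\GGtwo}({\bf g})$ as a sum over perfect matchings of Stanley's chromatic symmetric function evaluated at $\mathbf{x}^2$) with Stanley's power sum expansion from Proposition~\ref{prop:omegap}, and then to reorganize the resulting double sum by grouping according to the coarsenings $K \in L_{\bf g}^{\GGtwo}$ rather than by the matchings $\pi$.

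First I would apply Proposition~\ref{prop:Gcontraction} and, for each perfect matching $\pi$ of ${\bf g}$, expand $\Psi_\bullet({\bf g}_{\downarrow\pi})$ in the power sum basis via Proposition~\ref{prop:omegap}. Since $p_k(\mathbf{x}^2)=p_{2k}(\mathbf{x})$, the substitution $\mathbf{x}\mapsto\mathbf{x}^2$ doubles each part of the indexing partition. This gives
\[
\Psi_{\GGtwo}({\bf g}) = \sum_{\pi} \sum_{Q \in L_{{\bf g}_{\downarrow\pi}}} \mu_\pi(\hat{0}, Q)\, p_{2\lambda(Q)},
\]
where $\mu_\pi$ is the Möbius function of the bond lattice $L_{{\bf g}_{\downarrow\pi}}$.

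Next I would exhibit a bijection between pairs $(\pi, Q)$, with $\pi$ a perfect matching of ${\bf g}$ and $Q$ a connected partition of ${\bf g}_{\downarrow\pi}$, and pairs $(\pi, K)$ with $K \in L_{\bf g}^{\GGtwo}$ satisfying $\pi \le K$: given $(\pi,Q)$, lift each block of $Q$ (a set of vertices of ${\bf g}_{\downarrow\pi}$, i.e., edges of $\pi$) to its union of endpoints in $V({\bf g})$; conversely, given $(\pi,K)$ with $\pi\le K$, contract each edge of $\pi$ to recover $Q=K/\pi$. Connectedness is preserved in both directions, and block sizes double, so $\lambda(K)=2\lambda(Q)$. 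Interchanging the order of summation then rewrites the expression as
\[
\Psi_{\GGtwo}({\bf g}) = \sum_{K \in L_{\bf g}^{\GGtwo}} \Bigg(\sum_{\pi \le K} \mu_\pi(\hat{0}, K/\pi)\Bigg)\, p_{\lambda(K)}.
\]

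It remains to identify the inner sum with $\nu_{\bf g}^{\GGtwo}(K)$, which I would do by checking that $f(K):=\sum_{\pi \le K} \mu_\pi(\hat{0}, K/\pi)$ satisfies the defining relations of $\nu_{\bf g}^{\GGtwo}$. For minimal $K=\pi_0$ the only $\pi$ below is $\pi_0$ itself, and $\pi_0/\pi_0=\hat{0}$, giving $f(\pi_0)=\mu_{\pi_0}(\hat{0},\hat{0})=1$. For non-minimal $K$, I would swap sums in $\sum_{K'\le K} f(K')$ and use that, for each fixed $\pi \le K$, the map $K' \mapsto K'/\pi$ is an isomorphism between $[\pi,K]$ in $L_{\bf g}^{\GGtwo}$ and $[\hat 0, K/\pi]$ in $L_{{\bf g}_{\downarrow\pi}}$; the defining property of the Möbius function then collapses the inner sum to $\delta_{\hat 0, K/\pi}$, which vanishes since $K\neq\pi$. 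I expect the main obstacle to be verifying that this $[\pi,K]\cong[\hat 0, K/\pi]$ interval isomorphism holds inside $L_{\bf g}^{\GGtwo}$ (in particular that every intermediate $K'$ with $\pi\le K'\le K$ lies in $L_{\bf g}^{\GGtwo}$, and conversely that every connected partition of ${\bf g}_{\downarrow\pi}$ below $K/\pi$ lifts to a member of $L_{\bf g}^{\GGtwo}$); once this structural lemma is in place, the rest is bookkeeping.
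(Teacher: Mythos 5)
Your proposal is correct and follows essentially the same route as the paper's proof: apply Proposition~\ref{prop:Gcontraction}, expand each $\Psi_\bullet(\mathbf{g}_{\downarrow\pi})$ via Proposition~\ref{prop:omegap}, use $p_k(\mathbf{x}^2)=p_{2k}(\mathbf{x})$, and regroup the double sum according to the coarsenings $K\in L_{\bf g}^{\GGtwo}$. The only difference is that the paper merely asserts the key identity $\nu_{\bf g}^{\GGtwo}(K)=\sum_{\pi\le K}\mu_\pi(\hat 0,K/\pi)$ as an observation, whereas you verify it against the defining recursion of $\nu_{\bf g}^{\GGtwo}$ via the interval isomorphism $[\pi,K]\cong[\hat 0,K/\pi]$ --- a worthwhile addition.
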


\begin{proof}
Equation \ref{eq:Pda-prop} implies that the ($\GGtwo$)-invariant $\Psi_{\GGtwo}({\bf g})$ is the sum of 
$\Psi_\bullet({\bf g}_{\downarrow\pi})$ over the set of perfect matchings $\pi$ of ${\bf g}$ 
(with suitable renormalizations). 
We apply Proposition \ref{prop:omegap} and get that for any perfect matching $\pi$,
$\Psi_\bullet({\bf g}_{\downarrow\pi})$ is the sum of $p_{\lambda(Q)}(\mathbf{x}^2)$, weighted by the M\"{o}bius function associated to the usual bond lattice of ${\bf g}_{\downarrow\pi}$.
Next we observe that in the ($\GGtwo$)-bond poset, the value of $\nu_{\bf g}^{\GGtwo}(K)$ 
is the sum of the values of $\mu(\hat{0}, Q)$ with $Q$ obtained as a contraction of $Q=K_{\downarrow\pi}$
with respect to each perfect matching $\pi$.
To conclude, we just use the fact that $p_k(\mathbf{x}^2)=p_{2k}(\mathbf{x})$.
\end{proof}

\begin{example}
Let ${\bf g}=$\lower 5pt\hbox{\includegraphics[scale=0.7]{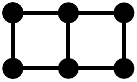}}.
The poset $L_{\bf g}^{\GGtwo}$ is given in Figure \ref{fig:bond-ex}, with the values of $\nu_{\bf g}^{\GGtwo}$ in red beneath each element of the poset.
\begin{figure}[htbp]
\begin{center}
\includegraphics[scale=0.3]{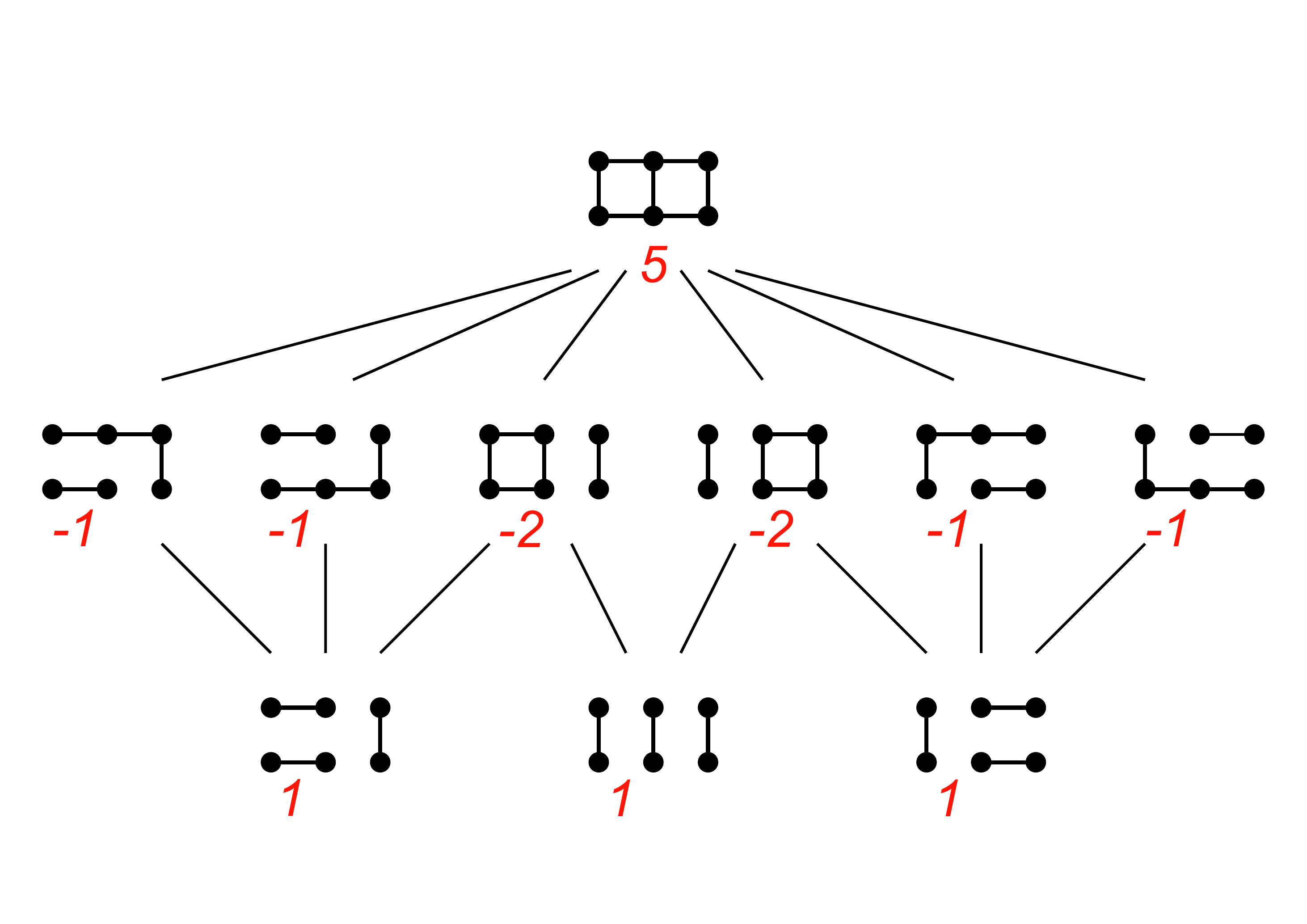}
\end{center}
\caption{Example of ($\scriptstyle \bullet-\bullet$)-bond poset}
\label{fig:bond-ex}
\end{figure}
We get the expression for the invariant $\Psi_{\GGtwo}({\bf g})$: $3p_2^3-8p_4p_2+5p_6$.
\end{example}

\section{The $h$-basis and the operator $\nabla$}\label{s:nabla}
In this section we return to the more familiar case of the usual chromatic symmetric symmetric function of explore some consequences of the $h$-alternating property.
For every positive integer $a$ we define an operator $C_a$ on $Sym$ by
\[C_a F[X] := \left(\left(\frac{-1}{q}\right)^{a-1}F\left[\frac{1-1/q}{z}\right] \sum_{m \geq 0}z^m h_m[X]\right)\bigg\rvert_{z^a}\]
for any $F \in Sym$.
For any integer composition $\alpha$ we define
\[C_{\alpha} := C_{\alpha_1} \circ C_{\alpha_2} \circ \cdots \circ C_{\alpha_{\ell}}\]
to be the compositions of operators.
The operators $C_{\alpha}$ were defined by Haglund, Morse, and Zabrocki~\cite{HMZ12} in the context of the compositional shuffle conjecture, generalizing a property discovered in~\cite{BDZ10}.
One identity involving  the operators $C_{\alpha}$ is that
\[\sum_{\alpha \vDash n} C_{\alpha} 1 = e_n\]
which implies one obtains a refinement the shuffle conjecture of  Haglund, Haiman, Loehr, Remmel, and Ulyanov~\cite{HHLRU05} when applying $\nabla$ to $C_{\alpha} 1$.
The compositional shuffle conjecture is now a theorem due to Carlsson and Mellit~\cite{CM18}.

\begin{theorem}[The compositional shuffle theorem \cite{CM18}]
For any composition $\alpha$, 
\[\nabla C_{\alpha} 1 = \sum_{\substack{PF \in \PF_n\\ \comp(PF) = \alpha}} t^{\area(PF)} q^{\dinv(PF)} F_{\ides(PF)}.\]
\label{thm:shuffle}
\end{theorem}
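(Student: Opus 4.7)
The plan is to follow the Carlsson--Mellit proof strategy \cite{CM18}, whose central innovation is to recognize both sides of the identity as values of a representation of an auxiliary algebra acting on symmetric functions. First I would introduce the Dyck path algebra $\mathbb{A}_{q,t}$, generated by elements $d_+,d_-,T_1,T_2,\ldots$ subject to Hecke--like relations and a creation/annihilation pairing, and construct its polynomial representation $V_\bullet = \bigoplus_k V_k$ where each $V_k$ is essentially $Sym$ with $k$ extra ``alphabet'' parameters $y_1,\ldots,y_k$. The operator $d_+$ inserts a new parameter, while $d_-$ and the $T_i$'s implement symmetrization and plethystic substitution. The key step is to express the operator $C_a$ in this algebraic language: one checks that $C_a = d_- T_1 T_2 \cdots T_{a-1}\, d_+$ (with appropriate normalizations by powers of $q$), matching the plethystic formula defining $C_a$.

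Next I would set up a combinatorial model on the right-hand side that is amenable to the same recursion. Given a parking function $PF$ with composition $\comp(PF)=\alpha=(\alpha_1,\ldots,\alpha_\ell)$, decompose $PF$ by stripping off the lowest diagonal segment, recording its length $\alpha_1$ and the manner in which its cars interleave with those of the remaining path. This decomposition naturally refines $t^{\area}q^{\dinv}F_{\ides}$ into pieces indexed by $\alpha_1$; the goal is to prove that summing these contributions realizes the operator $C_{\alpha_1}$ acting on the generating function for the smaller parking functions with composition $(\alpha_2,\ldots,\alpha_\ell)$. This is the recursive shape needed to match the algebraic side.

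With both sides packaged as recursions, the proof reduces to: (a) verifying that $\nabla$, conjugated through $d_+$, acts diagonally in a controlled way — this is the so-called ``eigenoperator trick'' and is the technical heart of Carlsson--Mellit; (b) checking the base case $\alpha=(n)$, which recovers Haglund's earlier ``hook shuffle'' identity; and (c) inducting on the length $\ell(\alpha)$. I would finish by assembling the pieces: apply $\nabla$ to $C_\alpha 1$, use the commutation identities in $\mathbb{A}_{q,t}$ to move $\nabla$ past the $d_\pm, T_i$ generators, and compare the resulting expression term by term with the parking function sum.

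The main obstacle is step (a), the commutation of $\nabla$ with the $d_\pm$ and $T_i$ operators. The operator $\nabla$ is defined spectrally via modified Macdonald polynomials, and it is highly non-obvious a priori that it extends compatibly to the larger representation $V_\bullet$. Carlsson and Mellit address this by introducing an enlarged ``elliptic Hall algebra'' perspective in which $\nabla$ becomes an inner automorphism, but any elementary proof must grapple with this structural fact. Once that compatibility is in hand, the combinatorial matching on parking functions, while intricate, is a careful bookkeeping exercise on area, dinv, and the quasisymmetric descent statistic $\ides$.
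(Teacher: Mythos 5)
The paper does not prove this statement at all: it is quoted verbatim from Carlsson and Mellit \cite{CM18} and used as a black box (its only role here is that, after setting $q=1$, it feeds into Lemma~\ref{lem:Cq=1}, Lemma~\ref{lem:epos} and Proposition~\ref{prop:info-dlambda}). So there is no internal proof to compare your attempt against, and for the purposes of this paper a citation is all that is required.

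Judged on its own terms, your outline is a fair pr\'ecis of the Carlsson--Mellit architecture (the Dyck path algebra $\mathbb{A}_{q,t}$, its polynomial representation $V_\bullet$ with adjoined variables $y_1,\dots,y_k$, a recursion on parking functions obtained by peeling off diagonal segments, and the identification of both sides as fixed points of the same recursion), but it is a roadmap rather than a proof. Two concrete issues: first, the identity you assert, $C_a = d_-T_1\cdots T_{a-1}d_+$, does not typecheck in the representation --- $d_+$ raises the number of $y$-variables by one and $d_-$ lowers it by one, so acting on $V_0=Sym$ the composite passes through $V_1$, where $T_1,\dots,T_{a-1}$ are not even defined for $a>2$; the actual bridge in \cite{CM18} between the plethystic operators $C_a$ and the algebra generators is more delicate and is itself a nontrivial lemma. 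Second, you explicitly defer step (a), the compatibility of $\nabla$ with the $d_\pm$ and $T_i$ action, which you correctly identify as the technical heart of the argument; without it the induction on $\ell(\alpha)$ cannot close. As written, then, the proposal names the right ingredients but leaves the two load-bearing steps unproved, which is acceptable only because the statement is an imported theorem rather than a result of this paper.
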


We briefly recall the following definitions related to Theorem~\ref{thm:shuffle}. The operator $\nabla$ was introduced in~\cite{BGHT99} 
as the linear operator on symmetric functions with eigenvalues given by the Macdonald symmetric functions and prescribed eigenvalues
(see~\cite{BGHT99} for more details).
A Dyck path $D$ of size $n$ is a lattice path from $(0,0)$ to $(n,n)$ with steps $(0,1)$ and $(1,0)$ which remains above the diagonal.
We may encode its contact points with the diagonal through a composition of $n$, denoted $\comp(D)$.
The area of a Dyck path $D$, denoted $\area(D)$, is the number of $1\times1$ squares between $D$ and the diagonal.
The type of a Dyck path $D$, denoted $\type(D)$, is the partition of $n$ with parts corresponding the size of the vertical rises.
A parking function $PF$ (with underlying path $D$) is a labelling of the $(0,1)$ steps of a Dyck path $D$
such that the labels associated to adjacent $(0,1)$ steps are increasing.
We denote the set of all parking functions of size $n$ by $\PF_n$.
We extend the statistics $\area$ and $\comp$ to parking functions; we do not recall here the definition of $\dinv$
as we shall put $q=1$ in the sequel.
To any parking function $PF$ we associate a word $w(PF)$  obtained by reading the labels from highest 
to lowest diagonal and right to left within each diagonal. It is clear that $w(PF)$ is a permutation of size $n$,
and the $\ides$ statistic is defined as the composition whose partial sums give the descent set of $w(PF)^{-1}$.
Figure~\ref{fig:defPF} illustrates these definitions.
In this example, one has: $\comp(PF)=(2,1,5)$, $\area(PF)=8$, $w(PF)=74865123$ hence $\ides(PF)=(3,2,1,2)$.
The underlying Dyck path $D$ in Figure~\ref{fig:defPF} has $\type(D)=(3,2,1,1,1)\vdash 8$.
\begin{figure}[htbp]
\begin{center}

\includegraphics[scale=0.25]{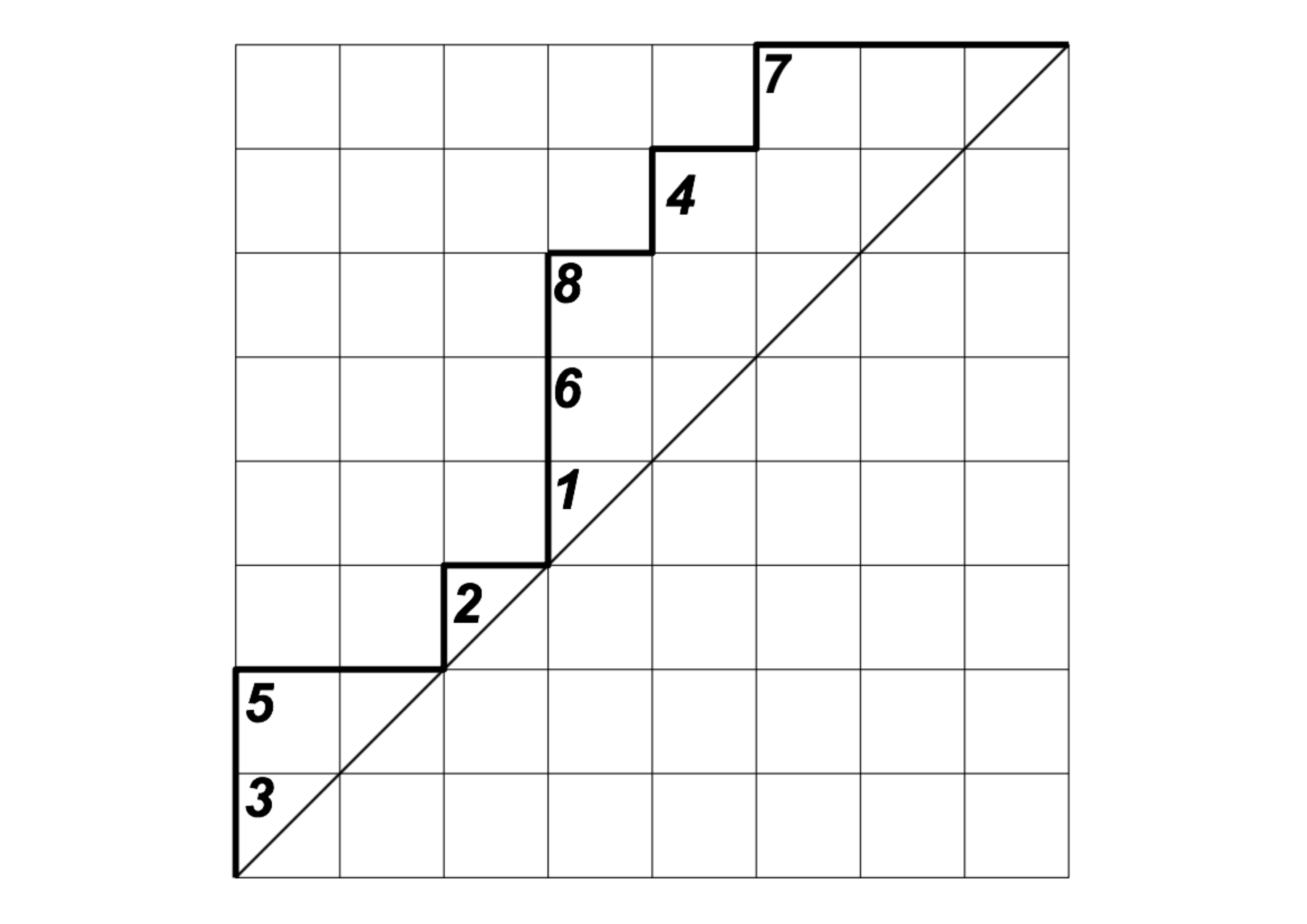}
\end{center}
\caption{A parking function of size 8.}
\label{fig:defPF}
\end{figure}


The shuffle theorem gives a combinatorial interpretation of $\nabla e_n$.
A natural next step is to look for an understanding of $\nabla F$ given another symmetric function $F$.
In this section we initiate the study of $\nabla \Psi_\bullet({\bf g})$ for any graph ${\bf g}$.
The quantity $\nabla \Psi_\bullet({\bf g})$ is a generalization of $\nabla e_n$ as $\Psi_\bullet({\bf g}) = n! e_n$ when ${\bf g}$ is the complete graph on $n$ vertices.
Other symmetric functions which have been considered include a conjectured formula for $\nabla s_{\lambda}$~\cite{nablaSchur} as well as a conjectured formula for $\nabla p_n$~\cite{sqPathsConj} which has been proven~\cite{sqPaths}.
Work has also been done on $\nabla m_{\lambda}$~\cite{Sergel}.

\begin{example}
Let ${\bf g}$ be the path graph on $3$ vertices which is the smallest connected graph which is not a complete graph.
We then have
\begin{align*}
\Psi_\bullet({\bf g}) &= p_{1^3} - 2p_{2,1} + p_{3}\\
&= 4h_{1^3} - 7h_{2, 1} + 3h_{3}
\end{align*}
as the chromatic symmetric function.
Applying $\nabla$ we find
\[\nabla \Psi_\bullet({\bf g}) = (-q^3t-q^2t^2-qt^3+4q^3+4q^2t+4qt^2+4t^3+4qt)s_{1^3}\]
\[ \qquad\qquad+ (-q^2t-qt^2+4q^2+4qt+4t^2+4q+4t)s_{2, 1} + 4s_3\]
and specializing to $q=1$ we obtain
\[(\nabla \Psi_\bullet({\bf g}))\big\rvert_{q=1} = (3t^3+3t^2+7t+4)s_{1^3} + (3t^2+7t+8)s_{2, 1} + 4s_3\]
which has coefficients that are polynomials in $t$ with nonnegative coefficients when expanded in the basis of Schur functions.
We will see this Schur positivity at $q=1$ is always the case 
and will attempt to understand this Schur positive expansion through the compositional shuffle theorem by understanding the $h$-basis expansion of $\Psi_\bullet({\bf g})$.
Furthermore, we also find that we have $e$-positivity which is demonstrated by
\[ (\nabla \Psi_\bullet({\bf g}))\big\rvert_{q=1}= 4e_{1, 1, 1} + (3t^2+7t)e_{2, 1} + 3t^3e_3\]
in this example.
\end{example}

Observing that when $q=1$ the plethystic shift vanishes, it follows that 
\begin{equation}
    \left(C_{\alpha} 1 \right)\big\rvert_{q=1} = (-1)^{|\alpha| - \ell(\alpha)}h_{\alpha}
    \label{eq:h}
\end{equation}
and we see a connection between the $C$-operators and the $h$-basis.
This observation is also made in~\cite[Lemma 2.1]{Sergel}.
Notice that this implies that  
\begin{equation}
    (C_{\alpha} 1)\big\rvert_{q=1} (C_{\beta} 1)\big\rvert_{q=1} = (C_{\alpha\beta} 1)\big\rvert_{q=1}
    \label{eq:Cmult}
\end{equation}
and so the $C$-operators are multiplicative when $q=1$.
Garsia and Sergel have shown the equation
\begin{equation}
    (-1)^{n-1} p_n = \sum_{\alpha \vDash n} [\alpha_1]_q C_{\alpha} 1 =  \sum_{\alpha \vDash n} \alpha_1  (C_{\alpha} 1)\big\rvert_{q=1}
    \label{eq:pn}
\end{equation}
which can be found in Sergel's thesis~\cite[Theorem 4.1.4]{SergelThesis}. The last equality follows from the fact that $p_{\lambda} = (p_{\lambda})\big\rvert_{q=1}$.
This can be used to give another proof of the fact in Lemma~\ref{lem:palt} that if $\omega(F)$ is $p$-positive, then $F$ is $h$-alternating.

\begin{lemma}
If $F \in Sym_n$ is such that $\omega(F)$ is $p$-positive, then 
\[F = \sum_{\alpha \vDash n} b_{\alpha} (C_{\alpha} 1)\big\rvert_{q=1} = \sum_{\alpha \vDash n} (-1)^{n - \ell(\alpha)}b_{\alpha} h_{\alpha} \]
for some $b_{\alpha} \geq 0$ for all $\alpha \vDash n$.
\label{lem:Cq=1}
\end{lemma}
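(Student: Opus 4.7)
The plan is to use the three identities~\eqref{eq:h},~\eqref{eq:Cmult}, and~\eqref{eq:pn} in sequence. By hypothesis, write
\[\omega(F) = \sum_{\lambda \vdash n} c_\lambda p_\lambda\]
with $c_\lambda \geq 0$. Applying $\omega$ and using $\omega(p_\lambda) = (-1)^{n-\ell(\lambda)} p_\lambda$ gives
\[F = \sum_{\lambda \vdash n} c_\lambda (-1)^{n - \ell(\lambda)} p_\lambda,\]
so it suffices to show that each $(-1)^{n - \ell(\lambda)} p_\lambda$ is a nonnegative combination of the elements $(C_\alpha 1)\big\rvert_{q=1}$ for $\alpha \vDash n$; the lemma then follows immediately by invoking~\eqref{eq:h} to identify $(C_\alpha 1)\big\rvert_{q=1}$ with $(-1)^{n-\ell(\alpha)} h_\alpha$.

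For the key step, I would factor
\[(-1)^{n-\ell(\lambda)} p_\lambda = \prod_{i=1}^{\ell(\lambda)} (-1)^{\lambda_i - 1} p_{\lambda_i},\]
and apply identity~\eqref{eq:pn} to each factor, writing each $(-1)^{\lambda_i - 1} p_{\lambda_i}$ as a nonnegative combination of $(C_{\alpha^{(i)}} 1)\big\rvert_{q=1}$ over compositions $\alpha^{(i)} \vDash \lambda_i$. Expanding the resulting product and repeatedly applying~\eqref{eq:Cmult} collapses each product of $C$-operators evaluated at $q=1$ into a single $(C_{\alpha^{(1)} \alpha^{(2)} \cdots \alpha^{(\ell(\lambda))}} 1)\big\rvert_{q=1}$, indexed by the concatenation of the component compositions. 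The resulting coefficients are products of positive integers of the form $\prod_i \alpha^{(i)}_1$, hence nonnegative, so we obtain the desired expansion with the outer composition ranging over $\alpha \vDash n$.

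Combining these two steps expresses $F$ as $\sum_\alpha b_\alpha (C_\alpha 1)\big\rvert_{q=1}$ with $b_\alpha \geq 0$, after which~\eqref{eq:h} delivers the second equality in the statement. No substantive obstacle is anticipated: the argument is essentially bookkeeping with signs and compositions, the only points requiring care being the verification that $\prod_{i=1}^{\ell(\lambda)} (-1)^{\lambda_i - 1} = (-1)^{n - \ell(\lambda)}$ and that iterated use of~\eqref{eq:Cmult} correctly concatenates the compositions in the intended way. This also gives an alternative, more refined derivation of Lemma~\ref{lem:palt}, since the expansion so obtained is manifestly $h$-alternating.
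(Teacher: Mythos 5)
Your proposal is correct and follows essentially the same route as the paper: both arguments reduce to showing $(-1)^{n-\ell(\lambda)}p_\lambda$ is a nonnegative combination of the $(C_\alpha 1)\big\rvert_{q=1}$ by factoring $p_\lambda$, applying Equation~\eqref{eq:pn} to each $p_{\lambda_i}$, and concatenating compositions via Equation~\eqref{eq:Cmult}, with Equation~\eqref{eq:h} supplying the second equality. The only difference is cosmetic — you apply $\omega$ to $F$ at the outset while the paper applies $\omega$ to the product expansion of $p_\lambda$ — so there is nothing further to flag.
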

\begin{proof}
The second equality in the proposition follows immediately from Equation~(\ref{eq:h}).
So, we must only demonstrate the positivity of the $b_\alpha$ in the first equality.
Consider an integer partition $\lambda \vdash n$, then
\begin{align*}
    p_{\lambda} &= p_{\lambda_1}p_{\lambda_2}\cdots p_{\lambda_{\ell}}\\
    &= (-1)^{n - \ell}\left(\sum_{\alpha \vDash \lambda_1} \alpha_1 (C_{\alpha} 1)\big\rvert_{q=1}\right) \left(\sum_{\alpha \vDash \lambda_2} \alpha_1 (C_{\alpha} 1)\big\rvert_{q=1}\right) \cdots \left(\sum_{\alpha \vDash \lambda_{\ell}} \alpha_1 (C_{\alpha} 1)\big\rvert_{q=1}\right)
\end{align*}
where we have used Equation~(\ref{eq:pn}).
We then see that
\[\omega(p_{\lambda}) = \left(\sum_{\alpha \vDash \lambda_1} \alpha_1 (C_{\alpha} 1)\big\rvert_{q=1}\right) \left(\sum_{\alpha \vDash \lambda_2} \alpha_1 (C_{\alpha} 1)\big\rvert_{q=1}\right) \cdots \left(\sum_{\alpha \vDash \lambda_{\ell}} \alpha_1 (C_{\alpha} 1)\big\rvert_{q=1}\right)\]
and the existence of positive  $b_\alpha$ in the proposition follows by Equation~(\ref{eq:Cmult}).
\end{proof}

In~\cite{BGHT99} many positivity conjectures are given regarding $\nabla$, but at $q=1$ most can be shown.
In particular, in~\cite[Theorem 4.4]{Lenart2000},  Lenart shows that $\nabla s_{\lambda/\mu}\big\rvert_{q=1}$ is Schur positive (up to a global sign).  
This gives that $(-1)^{n-1}\nabla (h_n)\big\rvert_{q=1}$ is Schur positive. Since $\nabla$ at $q=1$ is multiplicative, we have that $\nabla$ applied to any 
(positively) $h$-alternating symmetric function expand positively in term of Schur function. We can push the argument of Lenart further and in fact show that $\nabla (s_{\lambda/\mu})\big\rvert_{q=1}$ is $e$-positive (up to a global sign). Since we only need $(-1)^{n-1}\nabla (h_n)\big\rvert_{q=1}$, we will show the following lemma.
\begin{lemma}\label{lem:epos} Let ${\mathcal D}_n$ denote the set of Dyck paths of size $n$ and let $1\le k\le n$. 
For $D\in {\mathcal D}_n$ let  $\comp_1(D)$ be the first part of $\comp(D)$.
We have
 $$\nabla (s_{k1^{n-k}})\big\rvert_{q=1} = (-1)^{k-1} \sum_{D\in {\mathcal D}_n \atop \comp_1(D)\ge k } t^{\area(D)} e_{\type(D)}\,.$$
\end{lemma}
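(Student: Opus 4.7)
The plan is to rewrite $s_{(k,1^{n-k})}$ as a signed sum of the operator outputs $(C_\alpha 1)\big\rvert_{q=1}$ indexed by compositions $\alpha$ of $n$ with $\alpha_1 \geq k$, and then apply $\nabla$ using the compositional shuffle theorem (Theorem~\ref{thm:shuffle}).

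The starting point is the Jacobi--Trudi expansion of the hook Schur function,
$$s_{(k,1^{n-k})} = \sum_{i=0}^{n-k} (-1)^i h_{k+i}\, e_{n-k-i}.$$
Combining Equation~\eqref{eq:h} with the identity $\sum_{\alpha \vDash m} C_\alpha 1 = e_m$ gives
$$h_m = (-1)^{m-1}(C_m 1)\big\rvert_{q=1}, \qquad e_m = \sum_{\beta \vDash m} (C_\beta 1)\big\rvert_{q=1}.$$
Using the multiplicativity at $q=1$ from Equation~\eqref{eq:Cmult}, each summand becomes
$$(-1)^i h_{k+i}\, e_{n-k-i} = (-1)^{k-1}\sum_{\beta \vDash n-k-i} (C_{(k+i,\beta)} 1)\big\rvert_{q=1},$$
where $(k+i,\beta)$ denotes concatenation. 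As $(i,\beta)$ ranges over pairs with $0 \leq i \leq n-k$ and $\beta \vDash n-k-i$, the composition $\alpha = (k+i,\beta)$ ranges precisely over the compositions of $n$ with first part at least $k$, yielding
$$s_{(k,1^{n-k})} = (-1)^{k-1}\sum_{\alpha \vDash n,\; \alpha_1 \geq k} (C_\alpha 1)\big\rvert_{q=1}.$$

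Next, I would apply $\nabla$ and specialize $q=1$. Theorem~\ref{thm:shuffle} at $q=1$ yields $(\nabla C_\alpha 1)\big\rvert_{q=1} = \sum_{\comp(PF)=\alpha} t^{\area(PF)} F_{\ides(PF)}$, and so
$$(\nabla s_{(k,1^{n-k})})\big\rvert_{q=1} = (-1)^{k-1}\sum_{\substack{PF \in \PF_n \\ \comp_1(PF) \geq k}} t^{\area(PF)} F_{\ides(PF)}.$$
Since both $\comp_1$ and $\area$ depend only on the underlying Dyck path, I would then group parking functions by their Dyck path $D$ and invoke the classical $q=1$ parking-function identity
$$\sum_{PF:\; \mathrm{path}(PF)=D} F_{\ides(PF)} = e_{\type(D)}$$
to extract the desired formula.

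The one step with any real content is the per-path identity $\sum_{PF:\; \mathrm{path}(PF)=D} F_{\ides(PF)} = e_{\type(D)}$, which I expect to be the main obstacle if quoted directly; everything else is routine algebra with $C$-operators. A clean way to bypass this is to induct on $k$. The base case $k=1$ is precisely the classical $q=1$ shuffle identity $\nabla e_n\big\rvert_{q=1} = \sum_D t^{\area(D)} e_{\type(D)}$ (which follows from Theorem~\ref{thm:shuffle} together with $\sum_\alpha C_\alpha 1 = e_n$), and the inductive step follows by comparing the hook expansions of $s_{(k,1^{n-k})}$ and $s_{(k+1,1^{n-k-1})}$: their difference telescopes to exactly the sum over compositions $\alpha$ with $\alpha_1 = k$ appearing on the right-hand side of the lemma.
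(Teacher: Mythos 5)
Your proposal is correct, and its primary route is genuinely different from the paper's argument, while your fallback induction is essentially the paper's proof verbatim. The paper proceeds by induction on $k$: the base case is $\nabla(e_n)\big\rvert_{q=1}=\sum_D t^{\area(D)}e_{\type(D)}$, and the inductive step uses the Pieri-type identity $s_{k1^{n-k}}=h_{k-1}e_{n-k+1}-s_{(k-1)1^{n-k+1}}$ together with multiplicativity of $\nabla$ at $q=1$, so that the subtracted product accounts exactly for the Dyck paths with $\comp_1(D)=k-1$ and the sum telescopes --- this is precisely your proposed bypass (up to reindexing $k\mapsto k+1$, and noting that the signed ``difference'' of index sets comes from the sum $s_{(k,1^{n-k})}+s_{(k+1,1^{n-k-1})}=h_k e_{n-k}$). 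Your primary route is more direct and yields the clean intermediate identity $s_{(k,1^{n-k})}=(-1)^{k-1}\sum_{\alpha\vDash n,\ \alpha_1\ge k}(C_\alpha 1)\big\rvert_{q=1}$, which is correct (it is the signed $h$-expansion of the hook Schur function repackaged via Equation~\eqref{eq:h}) and lets you invoke Theorem~\ref{thm:shuffle} composition by composition. Its one external input is the per-path identity $\sum_{PF:\,\mathrm{path}(PF)=D}F_{\ides(PF)}=e_{\type(D)}$, which you rightly flag: it is true and standard (it is the $q=1$ factorization of the LLT polynomial attached to $D$ into elementary symmetric functions indexed by the vertical runs), but it is strictly stronger than the aggregate identity $\nabla e_n\big\rvert_{q=1}=\sum_D t^{\area(D)}e_{\type(D)}$ that the paper cites, and it cannot be recovered from the aggregate by comparing powers of $t$ since distinct Dyck paths can share the same area. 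So if you want a self-contained proof within the paper's toolkit, you must either cite the per-path factorization explicitly or fall back on the induction; with either choice the argument is complete.
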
 
\begin{proof}
We proceed by induction on $k$. For $k=1$, we have $s_{1^{n}}=e_n$ and in this case it is well known (see~\cite{HHLRU05}) that
  $$\nabla (e_n)\big\rvert_{q=1} = \sum_{D\in {\mathcal D}_n } t^{\area(D)} e_{\type(D)}\,.$$
For $k\ge 2$, we have the identity $s_{k1^{n-k}}=h_{k-1}e_{n-k+1}-s_{(k-1)1^{n-k+1}}$. Applying $\nabla$ at $q=1$ to both sides, and using the induction hypothesis, we obtain
  $$ (-1)^{k-1}(\nabla s_{k1^{n-k}})\big\rvert_{q=1} =(-1)^{k-1}\Big(  \nabla (h_{k-1}e_{n-k+1})\big\rvert_{q=1}  -  \nabla (s_{(k-1)1^{n-k+1}})\big\rvert_{q=1}\Big)
  $$
  $$ = (-1)^{k-2}\Big(  \nabla (s_{(k-1)1^{n-k+1}})\big\rvert_{q=1} -     \nabla (h_{k-1})\big\rvert_{q=1} \nabla( e_{n-k+1})\big\rvert_{q=1} \Big)
  $$
 $$ =\Big(\hskip-10pt \sum_{D\in {\mathcal D}_n \atop \comp_1(D)\ge k-1 } t^{\area(D)} e_{\type(D)}\Big) -
  \Big(\hskip-10pt\sum_{D'\in {\mathcal D}_{k-1} \atop \comp_1(D')\ge k-1 } t^{\area(D')} e_{\type(D')}\Big)
  \Big(\hskip-10pt\sum_{D''\in {\mathcal D}_{n-k+1} } t^{\area(D'')} e_{\type(D'')}\Big)  $$
For $D'\in {\mathcal D}_{k-1}$ if $\comp_1(D')\ge k-1$, then we must have $\comp(D')=k-1$.
Next for $D''\in {\mathcal D}$, the concatenation $D=D'D'' \in {\mathcal D}_n$ is such that $\comp_1(D)= k-1$.
Hence, the difference above gives us exactly 
$$\Big(\hskip-10pt \sum_{D\in {\mathcal D}_n \atop \comp_1(D)\ge k } t^{\area(D)} e_{\type(D)}\Big)$$
as the only Dyck paths $D$ with $\comp_1(D)\ge k$ remain. This concludes the proof.
\end{proof}

Using the lemma above, we thus have the following theorem.

\begin{theorem}\label{thm:nablapos}
If $F \in Sym_n$ is positively h-alternating (see Def.~\ref{def:h-alt}), then $(\nabla F)\big\rvert_{q=1}$ is $e$-positive (and Schur positive). In particular, if  
$\omega(F)$ is $p$-positive, then $(\nabla F)\big\rvert_{q=1}$ is $e$-positive. 
\end{theorem}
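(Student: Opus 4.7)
The plan is to reduce to the single-row case $h_n$ and combine Lemma~\ref{lem:epos} with the multiplicativity of $\nabla|_{q=1}$ asserted in the paragraph preceding that lemma. First, by Definition~\ref{def:h-alt}, write
\[
F \;=\; \sum_{\lambda \vdash n} c_\lambda\, (-1)^{n-\ell(\lambda)}\, h_\lambda, \qquad c_\lambda \ge 0.
\]
Since each $h_\lambda$ is free of $q$, $\mathbb{Q}(q,t)$-linearity of $\nabla$ gives
\[
(\nabla F)|_{q=1} \;=\; \sum_{\lambda \vdash n} c_\lambda\, (-1)^{n-\ell(\lambda)}\, (\nabla h_\lambda)|_{q=1},
\]
so it suffices to show that each summand is $e$-positive with coefficients in $\mathbb{N}[t]$.

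Fix $\lambda = (\lambda_1, \ldots, \lambda_\ell) \vdash n$. Factoring $h_\lambda = h_{\lambda_1}\cdots h_{\lambda_\ell}$ and invoking multiplicativity of $\nabla$ at $q=1$ rewrites the signed summand as
\[
(-1)^{n-\ell(\lambda)}\,(\nabla h_\lambda)|_{q=1} \;=\; \prod_{i=1}^{\ell} (-1)^{\lambda_i - 1}\, (\nabla h_{\lambda_i})|_{q=1}.
\]
For each single factor, apply Lemma~\ref{lem:epos} with $n$ replaced by $\lambda_i$ and $k = \lambda_i$, using the identification $h_m = s_m = s_{m\cdot 1^{m-m}}$: this yields
\[
(-1)^{\lambda_i - 1}\,(\nabla h_{\lambda_i})|_{q=1} \;=\; \sum_{\substack{D \in {\mathcal D}_{\lambda_i}\\ \comp_1(D) \ge \lambda_i}} t^{\area(D)}\, e_{\type(D)},
\]
which is manifestly $e$-positive with coefficients in $\mathbb{N}[t]$. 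Because $e_\mu \cdot e_\nu = e_{\mu \cup \nu}$, a product of $e$-positive symmetric functions is again $e$-positive, so every signed summand $(-1)^{n-\ell(\lambda)}(\nabla h_\lambda)|_{q=1}$ is $e$-positive. Weighing by the non-negative $c_\lambda$ and summing over $\lambda$ proves the $e$-positivity of $(\nabla F)|_{q=1}$, and Schur positivity is then automatic since each $e_\mu$ is Schur positive. For the second assertion, Lemma~\ref{lem:palt} already shows that $\omega(F)$ being $p$-positive forces $F$ to be positively $h$-alternating, so this case reduces to the first.

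The principal difficulty is the appeal to multiplicativity of $\nabla|_{q=1}$, which the paper cites but does not prove in situ. An alternative route avoiding this would lift $F$ to $\widetilde F = \sum_\alpha b_\alpha\, C_\alpha 1$ via (\ref{eq:h}) and (\ref{eq:Cmult}) and note that $(\nabla \widetilde F)|_{q=1} = (\nabla F)|_{q=1}$ by linearity in the monomial basis, then apply Theorem~\ref{thm:shuffle} at $q=1$; however, this path only delivers a fundamental-quasisymmetric $F$-positive expansion, and upgrading it to $e$-positivity would still require Lemma~\ref{lem:epos} together with a multiplicativity argument, so the essential content of the proof is unchanged.
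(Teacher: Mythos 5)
Your proof is correct and follows essentially the same route as the paper: expand $F$ in the signed $h$-basis, invoke multiplicativity of $\nabla$ at $q=1$, and apply Lemma~\ref{lem:epos} with $k=\lambda_i$ (i.e.\ the case $s_{m1^{0}}=h_m$) to each factor, exactly as the paper intends when it says ``Using the lemma above, we thus have the following theorem.'' Your closing caveat about the unproved multiplicativity of $\nabla|_{q=1}$ is fair, but the paper relies on the same assertion, so there is no divergence in substance.
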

 
By combining Proposition~\ref{prop:omegap} and Theorem~\ref{thm:nablapos} we obtain the following result.

\begin{corollary}
If ${\bf g}$ is a graph on $n$ vertices, then
\[(\nabla \Psi_\bullet({\bf g}))\big\rvert_{q=1} = \sum_{\lambda \vdash n} d_{\lambda}(t) s_{\lambda}= \sum_{\lambda \vdash n} d'_{\lambda}(t) e_{\lambda}\]
where $d_{\lambda}(t),d'_{\lambda}(t) \in \mathbb{N}[t]$ for all $\lambda \vdash n$.
\label{cor:csf}
\end{corollary}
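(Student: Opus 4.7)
The plan is to obtain this corollary essentially for free by chaining together the results already established in the section. First, by Proposition~\ref{prop:omegap} we know that $\omega(\Psi_\bullet(\mathbf{g}))$ expands positively in the power sum basis, because the intervals $[\hat 0, Q]$ in the bond lattice $L_{\mathbf{g}}$ are geometric lattices and hence their Möbius function values have the sign property needed. This is the only property of the chromatic symmetric function that we will use.

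Next I would invoke Lemma~\ref{lem:palt} (or equivalently Lemma~\ref{lem:Cq=1}) to conclude that $\Psi_\bullet(\mathbf{g})$ is positively $h$-alternating in the sense of Definition~\ref{def:h-alt}: the coefficient of $(-1)^{n-\ell(\lambda)}h_\lambda$ in the $h$-expansion is nonnegative for every $\lambda\vdash n$. Then Theorem~\ref{thm:nablapos} applies directly: it asserts that any positively $h$-alternating symmetric function $F$ has $(\nabla F)\big\rvert_{q=1}$ both $e$-positive and Schur positive. Taking $F=\Psi_\bullet(\mathbf{g})$ yields the existence of the expansions
\[(\nabla \Psi_\bullet(\mathbf{g}))\big\rvert_{q=1} = \sum_{\lambda\vdash n} d_\lambda(t)\, s_\lambda = \sum_{\lambda\vdash n} d'_\lambda(t)\, e_\lambda.\]

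The only point that still requires a brief remark is that the coefficients $d_\lambda(t)$ and $d'_\lambda(t)$ lie in $\mathbb{N}[t]$ rather than merely $\mathbb{R}_{\geq 0}$. This is immediate by tracing the proof of Theorem~\ref{thm:nablapos} back to Lemma~\ref{lem:epos}: there the $e$-expansion of $(-1)^{k-1}\nabla(s_{k1^{n-k}})\big\rvert_{q=1}$ is exhibited as a sum $\sum_D t^{\area(D)} e_{\type(D)}$ over Dyck paths, whose coefficients manifestly lie in $\mathbb{N}[t]$. Combining this with the nonnegative integer coefficients arising from the $h$-alternating expansion of $\Psi_\bullet(\mathbf{g})$ and the multiplicativity of $\nabla$ at $q=1$ (via Equation~(\ref{eq:Cmult})) shows $d'_\lambda(t)\in\mathbb{N}[t]$, and Schur positivity with $\mathbb{N}[t]$ coefficients for $d_\lambda(t)$ follows since the transition matrix from $e_\lambda$ to $s_\lambda$ has nonnegative integer entries (by the dual Jacobi--Trudi identity).

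There is really no obstacle here: the corollary is a direct synthesis of Proposition~\ref{prop:omegap}, Lemma~\ref{lem:palt}, and Theorem~\ref{thm:nablapos}. The conceptual content has been concentrated into Theorem~\ref{thm:nablapos}, whose proof in turn relies on Lemma~\ref{lem:epos} as the crucial ingredient for $e$-positivity; the remaining work in the corollary is the one-line observation that the chromatic symmetric function satisfies the hypothesis of the theorem.
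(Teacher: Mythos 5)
Your proposal is correct and follows essentially the same route as the paper, which derives the corollary in one line by combining Proposition~\ref{prop:omegap} with Theorem~\ref{thm:nablapos} (via Lemma~\ref{lem:palt}); your extra remarks tracing the $\mathbb{N}[t]$-integrality back through Lemma~\ref{lem:epos} and the Kostka-number expansion of $e_\lambda$ into Schur functions just make explicit what the paper leaves implicit.
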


Next proposition gives informations about the polynomial coefficients $d_{\lambda}(t) \in \mathbb{N}[t]$.
To state its third assertion, we need the following definitions.
\begin{definition}
Let $\lambda=(\lambda_1,\dots,\lambda_\ell)$ be a partition.
We define ${\mathcal D}_\lambda$ as the set of Dyck paths $D$ such that  $\comp(D)$ refines $\lambda$:
we may write 
$$\comp(D)=(c_{1,1},\dots,c_{1,k_1},c_{2,1},\dots,c_{2,k_2},\dots,c_{\ell,1},\dots,c_{\ell,k_\ell})$$
with $c_{j,1}+\dots+c_{j,k_j}=\lambda_j$ for any $j\in\{1,\dots,\ell\}$.
Using these notations, we define the constant $a_{comp(D),\lambda}$ as $\prod_{j=1}^\ell c_{j,1}$.
\end{definition}
\begin{proposition}\label{prop:info-dlambda}
We have:
\begin{align*}
d_\lambda(0) &= a({\bf g})f^\lambda;\\
d_{(n)}(t) &= d_{(n)}(0)=a({\bf g});\\
d_{1^n}(t) &= \sum_{Q\in L_{\bf g}} \sum_{D\in {\mathcal D}_{\lambda(Q)}} a_{comp(D),\lambda}\, |\mu(0,\pi)| \, t^{\area(D)},
\end{align*}
where $f^\lambda$ as usual denotes the number of standard Young tableaux of shape $\lambda$.
\end{proposition}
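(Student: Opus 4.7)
The plan is to derive an explicit $e$-basis expansion of $(\nabla \Psi_\bullet({\bf g}))\big|_{q=1}$, and then read off each $d_\lambda(t)$ by applying $e_\mu = \sum_\lambda K_{\lambda',\mu}\,s_\lambda$. The central intermediate identity is
\begin{equation*}
(\nabla p_\lambda)\big|_{q=1} \;=\; (-1)^{n-\ell(\lambda)} \sum_{D \in {\mathcal D}_\lambda} a_{\comp(D), \lambda}\, t^{\area(D)}\, e_{\type(D)}.
\end{equation*}
I would establish this by starting from equation~(\ref{eq:pn}), taking a product of $\ell(\lambda)$ copies (one for each part $\lambda_i$), and using the multiplicativity of the $C$-operators at $q=1$ (equation~(\ref{eq:Cmult})) to collapse the product into a single sum indexed by concatenations $\alpha^{(1)}\cdots\alpha^{(\ell)}$ with $\alpha^{(i)} \vDash \lambda_i$. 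Applying $\nabla$ and invoking the compositional shuffle theorem (Theorem~\ref{thm:shuffle}) at $q=1$, then grouping parking functions by their underlying Dyck path, converts the sum to one over $D \in {\mathcal D}_\lambda$ (since such concatenations refine $\lambda$ in exactly the sense defining ${\mathcal D}_\lambda$), with the product $\prod_i \alpha^{(i)}_1$ collapsing to $a_{\comp(D),\lambda}$. The key auxiliary identity $\sum_{PF:\text{path}=D} F_{\ides(PF)} = e_{\type(D)}$, which turns parking-function sums into elementary symmetric functions, follows by comparing the shuffle-theorem formula for $\nabla e_n = \nabla\sum_\alpha C_\alpha 1$ with Haglund's formula $(\nabla e_n)\big|_{q=1} = \sum_{D \in {\mathcal D}_n} t^{\area(D)} e_{\type(D)}$.

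Combining this intermediate identity with Proposition~\ref{prop:omegap} and the geometric-lattice sign identity $\mu(\hat{0},Q)\,(-1)^{n-\ell(\lambda(Q))} = |\mu(\hat{0},Q)|$ (valid because $L_{\bf g}$ is a geometric lattice, so $\mu(\hat{0},Q)$ has sign $(-1)^{\text{rank}(Q)} = (-1)^{n-\ell(\lambda(Q))}$) yields the master formula
\begin{equation*}
(\nabla \Psi_\bullet({\bf g}))\big|_{q=1} \;=\; \sum_{Q \in L_{\bf g}} |\mu(\hat{0},Q)| \sum_{D \in {\mathcal D}_{\lambda(Q)}} a_{\comp(D), \lambda(Q)}\, t^{\area(D)}\, e_{\type(D)}.
\end{equation*}
The three claims then all drop out of this expansion by extracting Schur coefficients from $e_\mu$. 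For assertion~(3), $K_{(1^n)',\mu} = K_{(n),\mu} = 1$ for every partition $\mu$, so every term contributes to $d_{1^n}(t)$ with $s_{1^n}$-coefficient $1$, reproducing the stated formula verbatim. For assertion~(2), $K_{(n)',\mu} = K_{1^n,\mu} = \delta_{\mu,1^n}$, so only the unique Dyck path of type $1^n$ survives — namely the zig-zag, which has $\area(D)=0$, $\comp(D) = 1^n$, and hence $a_{\comp(D),\lambda(Q)} = 1$ for every $Q \in L_{\bf g}$; what remains is $\sum_{Q \in L_{\bf g}} |\mu(\hat{0},Q)|$, which equals $a({\bf g})$ by evaluating $\chi_{\bf g}(t) = \sum_Q \mu(\hat{0},Q)\, t^{\ell(\lambda(Q))}$ at $t=-1$ and comparing with Stanley's reciprocity $\chi_{\bf g}(-1) = (-1)^n a({\bf g})$. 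For assertion~(1), setting $t=0$ in the master formula again isolates the zig-zag path, giving $(\nabla \Psi_\bullet({\bf g}))\big|_{q=1,\,t=0} = a({\bf g})\, e_{1^n}$, and then Frobenius's formula $e_{1^n} = p_1^n = \sum_\lambda f^\lambda\, s_\lambda$ completes the computation.

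The main obstacle is the intermediate Dyck-path formula for $\nabla p_\lambda\big|_{q=1}$; everything afterward is clean bookkeeping. Within that derivation, the most delicate step is the parking-function identity $\sum_{PF:\text{path}=D} F_{\ides(PF)} = e_{\type(D)}$, which ultimately rests on the Haglund-Haiman-Loehr-Remmel-Ulyanov / Carlsson--Mellit shuffle machinery; once it is invoked, the remainder of the argument is a clean combination of Proposition~\ref{prop:omegap}, Theorem~\ref{thm:shuffle}, equations~(\ref{eq:h}),~(\ref{eq:Cmult}),~(\ref{eq:pn}), and standard facts about geometric lattices and Kostka numbers.
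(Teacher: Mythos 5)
Your proof is correct, and it reaches the paper's equation~\eqref{eq:sun} by exactly the same route (Stanley's power-sum expansion, the Garsia--Sergel identity~\eqref{eq:pn}, multiplicativity of $\nabla$ at $q=1$, and the compositional shuffle theorem), but it then diverges in how the coefficients are extracted. The paper stays in the fundamental quasisymmetric basis and pulls out the coefficients of $F_{(n)}$ and $F_{(1^n)}$ directly via the multiplication rule for the $F_\alpha$, which is why it only obtains the three stated coefficients. You instead convert the parking-function sums into elementary symmetric functions first, arriving at the master formula $(\nabla \Psi_\bullet(\mathbf{g}))\big|_{q=1} = \sum_{Q} |\mu(\hat 0,Q)| \sum_{D\in\mathcal{D}_{\lambda(Q)}} a_{\comp(D),\lambda(Q)}\, t^{\area(D)} e_{\type(D)}$, and then read off Schur coefficients through Kostka numbers. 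This buys you something: the master formula is a single closed $e$-expansion from which \emph{every} $d_\lambda(t)$ is in principle computable (and it re-proves the $e$-positivity of Corollary~\ref{cor:csf} for chromatic symmetric functions as a by-product), whereas the paper's $F$-basis manipulations are tailored to the two extreme shapes. Your three specializations (the staircase being the unique Dyck path of area $0$ and the unique one of type $(1^n)$, $K_{(n),\mu}=1$, $K_{(1^n),\mu}=\delta_{\mu,(1^n)}$, Whitney's $\sum_Q|\mu(\hat 0,Q)|=a(\mathbf{g})$) are all right.

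One step is justified too weakly as written. You claim the path-by-path identity $\sum_{PF:\,\mathrm{path}=D} F_{\ides(PF)} = e_{\type(D)}$ and say it follows by comparing the shuffle theorem for $\nabla e_n$ with Haglund's formula $(\nabla e_n)|_{q=1}=\sum_D t^{\area(D)}e_{\type(D)}$. That comparison only equates the two sides after summing over all Dyck paths with a fixed area, and distinct paths share areas, so it cannot isolate a single $D$. The identity itself is true, but to get it from the machinery you cite you should work at the compositional level: Lemma~\ref{lem:epos} at $k=n$ gives $\nabla C_{(m)}1\big|_{q=1} = \sum_{\comp(D)=(m)} t^{\area(D)}e_{\type(D)}$, and multiplicativity at $q=1$ (Equation~\eqref{eq:Cmult}, concatenation of Dyck paths being additive in area and type) yields $\nabla C_\alpha 1\big|_{q=1} = \sum_{\comp(D)=\alpha} t^{\area(D)}e_{\type(D)}$ for every composition $\alpha$. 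This composition-level identity is all your master formula actually uses, so the repair is local and the rest of the argument stands unchanged.
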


\begin{proof}
We use \eqref{eq:csf_pn}, \eqref{eq:pn} and the fact that $\nabla\left( \cdot \right) \big\rvert_{q=1}$ is multiplicative, to write:
\begin{align*}
\Psi_\bullet({\bf g})
&= \sum_{Q \in L_{\bf g}} \mu(\hat{0}, Q) p_{\lambda(Q)}\\
&= \sum_{Q \in L_{\bf g}} |\mu(\hat{0}, Q)| \prod_{i=1}^{\ell} \Big(\sum_{\alpha \vDash \lambda_i} \alpha_1  (C_{\alpha} 1)\big\rvert_{q=1}  \Big)
\end{align*}
where for each $Q$, we denote $\lambda(Q)=(\lambda_1,\dots,\lambda_\ell)$.
We then apply Theorem~\ref{thm:shuffle} and get:
\begin{equation}\label{eq:sun}
(\nabla \Psi_\bullet({\bf g}))\big\rvert_{q=1} = 
\sum_{Q \in L_{\bf g}} |\mu(\hat{0}, Q)| \prod_{i=1}^{\ell} 
\Big(
\sum_{\alpha \vDash \lambda_i} \alpha_1 
\left(\sum_{\substack{PF \in \PF_{\lambda_i}\\ \comp(PF) = \alpha}} t^{\area(PF)} F_{\ides(PF)}\right)
 \Big)
\end{equation}
which is the key expression to derive informations about the coefficients $d_\lambda(t)$.

To compute $d_\lambda(0)$, we set $t=0$ in \eqref{eq:sun}
whence we keep only the parking functions with area equal to zero. 
Thus we obtain
\begin{align*}
\sum_{\lambda\vdash n} d_\lambda(0) s_\lambda &=  
\sum_{Q \in L_{\bf g}} |\mu(\hat{0}, Q)| \prod_{i=1}^{\ell} 
\Big(
\sum_{\substack{PF \in \PF_{\lambda_i}\\ \comp(PF) = (1^{\lambda_i})}}  F_{\ides(PF)}
 \Big)\\
 &=\sum_{Q \in L_{\bf g}} |\mu(\hat{0}, Q)|
  \prod_{i=1}^{\ell} 
\Big(
h_1^{\lambda_i}
 \Big)\\
 & = a({\bf g}) h_1^n = a({\bf g}) \sum_{\lambda\vdash n} f^\lambda s_\lambda,
 \end{align*}
where we have used Whitney's result \cite{Whitney} which says $\sum_{Q \in L_{\bf g}} |\mu(\hat{0}, Q)|=a({\bf g})$.
Whence the first assertion of the proposition.

Next, we consider the coefficient of $F_{(n)}=s_{(n)}$ in \eqref{eq:sun}.
We recall that the multiplicative rule for the fundamental quasisymmetric functions (see for example \cite{Gessel}) 
implies that for two compositions $\alpha\vDash a$ and $\beta\vDash b$ the coefficient of $F_{(a+b)}$
in $F_\alpha.F_\beta$ is $1$ if $\alpha=(a)$ and $\beta=(b)$ and $0$ if not.
This leads to the fact that for any $\lambda_i$, exactly one $PF \in \PF_{\lambda_i}$ may contribute 
(the one with zero area, with the labels put in decreasing order), thus
$$d_{(n)}(t) = \sum_{Q \in L_{\bf g}} |\mu(\hat{0}, Q)| F_{(n)},$$
and we get $d_{(n)}(t)=d_{(n)}(0)=a({\bf g})$.

We now turn to the last assertion of the proposition.
Let us consider the coefficient of $F_{(1^n)}=s_{(1^n)}$ in \eqref{eq:sun}.
In the present case, the multiplicative rule for the fundamental quasisymmetric functions
implies that for two compositions $\alpha\vDash a$ and $\beta\vDash b$ the coefficient of $F_{(1^{a+b})}$
in $F_\alpha.F_\beta$ is $1$ if $\alpha=(1^a)$ and $\beta=(1^b)$ and $0$ if not.
Now, we observe that for any Dyck path $D$ of size $k$, there is exactly one parking function $PF$ with underlying 
Dyck path $D$ such that $\ides(PF)=(1^k)$.
Putting all this together, we obtain:
\begin{align*}
d_{1^n}(t) &= \sum_{Q\in L_{\bf g}} |\mu(\hat{0}, Q)|
\prod_{i=1}^{\ell} 
\Big(
\sum_{\alpha \vDash \lambda_i} \alpha_1 
(\sum_{\comp(D) = \alpha} t^{\area(D)})
\Big)\\
&= \sum_{Q\in L_{\bf g}} |\mu(\hat{0}, Q)| 
\sum_{D\in {\mathcal D}_{\lambda(Q)}} a_{\comp(D),\lambda}\, t^{\area(D)}).
\end{align*}
\end{proof}

\section{Some general properties of invariants}\label{s:gen}
We have demonstrated how one can obtain many symmetric function and polynomial invariants from CHAs by considering various characters on Hopf algebras of permutations, posets, and graphs.
In previous sections we have analyzed some particular choices of characters.
In this final section we will discuss some properties which hold generally for many choices of characters on $\overline{\mathcal G}$.
We can obtain a different invariant for any collection of connected graphs.
When all connected graphs in the chosen collection have the same number of vertices, we find the symmetric function to have nice properties including $\omega(p)$-positivity (up to a global sign) as well as a cancellation free combinatorial reciprocity result.
Furthermore, we show how these invariants fit into the framework of \emph{scheduling problems}~\cite{sched}.

Let $A$ be any collection of connected graphs.
We then define a character $\zeta_A$ such that for a connected graph $\mathbf{g}$
\[\zeta_A(\mathbf{g}) = 
\begin{cases} 1  & \mathbf{g} \in A\\
0 & \text{otherwise}
\end{cases}\]
and then extend to the whole Hopf algebra.
This can be done since connected graphs generate $\overline{\mathcal{G}}$ as an algebra.
The character $\zeta_A$ is called \emph{homogeneous of degree $d$} if each graph in $A$ has exactly $d$ vertices.
Given any graph $\mathbf{g}$ let $\Pi_A(\mathbf{g})$ denote all set partitions $\pi = \pi_1 / \pi_2 / \cdots / \pi_{\ell}$ of the vertex set of $\mathbf{g}$ such that $\mathbf{g}|_{\pi_i} \in A$ for all $1 \leq i \leq \ell$.
For any graph $\mathbf{g}$ and any $\pi \in \Pi_A(\mathbf{g})$ we denote by $\mathbf{g}_{\downarrow\pi}$ the graph obtained by from $\mathbf{g}$ contracting the edges of $\mathbf{g}|_{\pi_i}$ to a single vertex for each $1 \leq i \leq \ell$.

The next lemma follows from the definition of the morphism $\Psi_{\zeta_A}$ and is an immediate generalization of Proposition~\ref{prop:Gcontraction}.

\begin{lemma}\label{lem:hom_contraction}
Let $\zeta_A$ be a character which is homogeneous of degree $d$.
The symmetric function $\Psi_{\zeta_A}(\mathbf{g})$ is given by
\begin{equation*}\label{eq:Pda-prop}
\Psi_{\zeta_A}(\mathbf{g})=\sum_{\pi \in \Pi_A(\mathbf{g})} \Psi_{\bullet}(\mathbf{g}_{\downarrow\pi})(\mathbf{x}^d).
\end{equation*}
In particular, $\Psi_{\zeta_A}(\mathbf{g}) = 0$ if $d$ does not divide $n$.
\end{lemma}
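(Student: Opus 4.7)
The plan is to compare the two sides coefficient by coefficient in the monomial basis $\{M_\alpha\}$, using the explicit formula
\[\Psi_{\zeta_A}(\mathbf{g}) = \sum_{\alpha \models n} (\zeta_A)_\alpha(\mathbf{g})\, M_\alpha\]
where $(\zeta_A)_\alpha(\mathbf{g})$ is obtained by taking the iterated coproduct, projecting onto the graded components indexed by $\alpha=(\alpha_1,\dots,\alpha_\ell)$, and applying $\zeta_A$ to each tensor factor. Unpacking the iterated coproduct on $\overline{\mathcal G}$, this gives
\[(\zeta_A)_\alpha(\mathbf{g}) = \sum_{(B_1,\dots,B_\ell)\,\models\, V(\mathbf{g}),\ |B_i|=\alpha_i} \prod_{i=1}^{\ell} \zeta_A\bigl(\mathbf{g}|_{B_i}\bigr).\]

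First I would observe that since $\zeta_A$ is an algebra map and vanishes on every connected graph outside $A$ (and all graphs in $A$ have exactly $d$ vertices), $\zeta_A(\mathbf{g}|_{B_i}) = 1$ precisely when $\mathbf{g}|_{B_i}$ is a disjoint union of members of $A$, and is $0$ otherwise. In particular this forces $d \mid |B_i|$ for every $i$, so $(\zeta_A)_\alpha(\mathbf{g}) = 0$ unless each $\alpha_i$ is a multiple of $d$; writing $\alpha = d\beta$ with $\beta = (\beta_1,\dots,\beta_\ell)\models n/d$, we get that $(\zeta_A)_{d\beta}(\mathbf{g})$ counts set compositions $(B_1,\dots,B_\ell)\models V(\mathbf{g})$ with $|B_i|=d\beta_i$ such that $\mathbf{g}|_{B_i}$ splits as a disjoint union of $\beta_i$ graphs in $A$. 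This already gives the claim that $\Psi_{\zeta_A}(\mathbf{g})=0$ when $d \nmid n$.

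Next I would expand the right-hand side. Using the standard monomial expansion of Stanley's chromatic symmetric function,
\[\Psi_{\bullet}(\mathbf{g}_{\downarrow\pi}) = \sum_{\beta \models n/d} c_\beta^{\bullet}(\mathbf{g}_{\downarrow\pi})\, M_\beta,\]
where $c_\beta^{\bullet}(\mathbf{g}_{\downarrow\pi})$ counts set compositions $(C_1,\dots,C_\ell)$ of the vertex set of $\mathbf{g}_{\downarrow\pi}$ with $|C_i|=\beta_i$ and each $C_i$ an independent set. Since $M_\beta(\mathbf{x}^d)=M_{d\beta}(\mathbf{x})$, the right-hand side of the lemma becomes
\[\sum_{\pi\in\Pi_A(\mathbf{g})} \sum_{\beta \models n/d} c_\beta^{\bullet}(\mathbf{g}_{\downarrow\pi})\, M_{d\beta}.\]
Matching coefficients of $M_{d\beta}$, the lemma reduces to the identity
\[(\zeta_A)_{d\beta}(\mathbf{g}) = \sum_{\pi\in\Pi_A(\mathbf{g})} c_\beta^{\bullet}(\mathbf{g}_{\downarrow\pi}).\]

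The key (and really only nontrivial) step is to produce a bijection between the two sides of this last identity. In one direction, given $(B_1,\dots,B_\ell)$ as counted by the left side, the blocks of the disjoint-union decompositions of the various $\mathbf{g}|_{B_i}$ together form a partition $\pi\in\Pi_A(\mathbf{g})$, and the blocks contained in $B_i$ assemble into a part $C_i$ in the quotient, yielding an independent set in $\mathbf{g}_{\downarrow\pi}$ (independence is exactly the statement that no edges of $\mathbf{g}$ go between the $A$-blocks inside $B_i$). In the reverse direction, given $\pi$ and a set composition $(C_1,\dots,C_\ell)$ of the blocks of $\pi$ by independent sets, set $B_i := \bigcup_{c\in C_i} c$; independence of $C_i$ in $\mathbf{g}_{\downarrow\pi}$ ensures that $\mathbf{g}|_{B_i}$ has no edges between distinct blocks of $\pi$ it contains, so it is a disjoint union of members of $A$, and $|B_i|=d|C_i|=d\beta_i$. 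These constructions are mutually inverse, which establishes the required identity and completes the proof. I expect no real obstacles here: once the coproduct is unpacked, the argument is exactly the bookkeeping already carried out for $d=2$ in Proposition~\ref{prop:Gcontraction}.
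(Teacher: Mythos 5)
Your proof is correct and follows the same route the paper intends: the paper gives no detailed argument for this lemma, simply asserting it ``follows from the definition of the morphism $\Psi_{\zeta_A}$'' as an immediate generalization of Proposition~\ref{prop:Gcontraction}, whose own two-sentence proof is exactly the bijection you spell out. Your write-up is a complete and accurate expansion of that sketch --- comparing coefficients of $M_{d\beta}$ and matching set compositions with $\zeta_A$-admissible restrictions against pairs consisting of a partition $\pi\in\Pi_A(\mathbf{g})$ and an independent-set composition of $\mathbf{g}_{\downarrow\pi}$ --- with no gaps.
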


\begin{proposition}
Let $\zeta_A$ be a character which is homogeneous of degree $d$.
Let $\mathbf{g}$ be a graph on $n = kd$ vertices.
Then $(-1)^{n - k}\omega(\Psi_{\zeta_A}(\mathbf{g}))$ is $p$-positive.
\label{prop:hom_omega}
\end{proposition}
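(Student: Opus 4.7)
The plan is to combine Lemma~\ref{lem:hom_contraction} with Proposition~\ref{prop:omegap}. By Lemma~\ref{lem:hom_contraction},
\[
\Psi_{\zeta_A}(\mathbf{g}) = \sum_{\pi \in \Pi_A(\mathbf{g})} \Psi_\bullet(\mathbf{g}_{\downarrow\pi})(\mathbf{x}^d),
\]
and each contracted graph $\mathbf{g}_{\downarrow\pi}$ has exactly $k = n/d$ vertices, since every block of $\pi$ has size $d$. So I would argue pointwise for each fixed $\pi$ and then sum, reducing the problem to controlling the sign of $\omega$ applied to $\Psi_\bullet(\mathbf{h})(\mathbf{x}^d)$ for an arbitrary graph $\mathbf{h}$ on $k$ vertices.

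Next I would apply Proposition~\ref{prop:omegap} to $\mathbf{h} = \mathbf{g}_{\downarrow\pi}$ and use the plethystic identity $p_m(\mathbf{x}^d) = p_{md}(\mathbf{x})$, yielding
\[
\Psi_\bullet(\mathbf{g}_{\downarrow\pi})(\mathbf{x}^d) \;=\; \sum_{Q \in L_{\mathbf{g}_{\downarrow\pi}}} \mu(\hat 0,Q)\, p_{d\lambda(Q)}(\mathbf{x}).
\]
Now apply $\omega$: since $\omega(p_\mu)=(-1)^{|\mu|-\ell(\mu)}p_\mu$, and because $|d\lambda(Q)|=dk=n$ while $\ell(d\lambda(Q))=\ell(\lambda(Q))$, we have $\omega(p_{d\lambda(Q)}) = (-1)^{n-\ell(\lambda(Q))} p_{d\lambda(Q)}$.

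The key sign-tracking step is to use the fact (already invoked in Proposition~\ref{prop:omegap}) that the bond lattice is geometric, so $\mu(\hat 0,Q) = (-1)^{k-\ell(\lambda(Q))}|\mu(\hat 0,Q)|$. Multiplying the two signs gives
\[
(-1)^{k-\ell(\lambda(Q))}\cdot(-1)^{n-\ell(\lambda(Q))} = (-1)^{n+k} = (-1)^{n-k},
\]
independently of $Q$ and $\pi$. Hence
\[
(-1)^{n-k}\,\omega\!\left(\Psi_{\zeta_A}(\mathbf{g})\right) \;=\; \sum_{\pi \in \Pi_A(\mathbf{g})}\sum_{Q \in L_{\mathbf{g}_{\downarrow\pi}}} \bigl|\mu(\hat 0,Q)\bigr|\, p_{d\lambda(Q)},
\]
which is manifestly $p$-positive.

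There is no real obstacle here beyond bookkeeping: the only substantive inputs are Lemma~\ref{lem:hom_contraction}, the already-cited geometric-lattice sign property behind Proposition~\ref{prop:omegap}, and the elementary plethystic identity $p_m(\mathbf{x}^d)=p_{md}$. The potential subtle point is confirming that the parities $|d\lambda(Q)|-\ell(d\lambda(Q))$ and $k-\ell(\lambda(Q))$ combine as claimed, independent of the chosen $Q$; this is immediate from the homogeneity assumption which forces every $\pi$ to give a contracted graph on the same number of vertices $k$.
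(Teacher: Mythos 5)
Your proposal is correct and follows essentially the same route as the paper: decompose via Lemma~\ref{lem:hom_contraction}, expand each $\Psi_\bullet(\mathbf{g}_{\downarrow\pi})$ in the power sum basis with signs controlled by Proposition~\ref{prop:omegap}, apply $p_\lambda(\mathbf{x}^d)=p_{d\cdot\lambda}$, and track the global sign $(-1)^{n-k}$. The only cosmetic difference is that you make the nonnegative coefficients explicit as $|\mu(\hat 0,Q)|$ via the geometric-lattice sign of the M\"obius function, whereas the paper simply writes the expansion as $\sum_\lambda(-1)^{k-\ell(\lambda)}a_\lambda p_\lambda$ with $a_\lambda\ge 0$.
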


\begin{proof}
By Lemma~\ref{lem:hom_contraction} we know that
\[\Psi_{\zeta_A}(\mathbf{g})=\sum_{\pi \in \Pi_A(\mathbf{g})} \Psi_{\bullet}(\mathbf{g}_{\downarrow\pi})(\mathbf{x}^d).\]
Furthermore, $\omega(\Psi_{\bullet}(\mathbf{g}_{\downarrow\pi}))$ is $p$-positive for each $\pi \in \Pi_A(\mathbf{g})$.
Also, $\mathbf{g}_{\downarrow\pi}$ is a graph on $k$ vertices.
We may write
\[\Psi_{\bullet}(\mathbf{g}_{\downarrow\pi}) = \sum_{\lambda \vdash k} (-1)^{k - \ell(\lambda)} a_{\lambda}p_{\lambda}(\mathbf{x})\]
where $a_{\lambda} \geq 0$.
For $\lambda \vdash k$ with $\lambda = (\lambda_1, \lambda_2, \dots, \lambda_{\ell})$ we let $d \cdot \lambda = (d\lambda_1, d\lambda_2, \dots, d\lambda_{\ell})$.
So, $d \cdot \lambda \vdash dk = n$ and $p_{\lambda}(\mathbf{x}^d) = p_{d \cdot \lambda}(\mathbf{x})$.
We then find
\begin{align*}
    (-1)^{n - k}\Psi_{\bullet}(\mathbf{g}_{\downarrow\pi})(\mathbf{x}^d) &= (-1)^{n - k}\sum_{\lambda \vdash k} (-1)^{k - \ell(\lambda)} a_{\lambda}p_{\lambda}(\mathbf{x}^d)\\
    &= (-1)^{n - k}\sum_{\lambda \vdash k} (-1)^{k - \ell(\lambda)} a_{\lambda}p_{d \cdot \lambda}\\
    &= \sum_{\lambda \vdash k} (-1)^{n - \ell(\lambda)} a_{\lambda}p_{d \cdot \lambda}
\end{align*}
and see that $\omega((-1)^{n - k}\Psi_{\bullet}(\mathbf{g}_{\downarrow\pi})(\mathbf{x}^d))$ is $p$-positive for each $\pi \in \Pi_A(\mathbf{g})$.
Therefore, $(-1)^{n - k}\omega(\Psi_{\zeta_A}(\mathbf{g}))$ is $p$-positive.
\end{proof}

Proposition~\ref{prop:hom_omega} combined with Lemma~\ref{lem:palt} gives us that $\pm \Psi_{\zeta_A}(\mathbf{g})$ is $h$-alternating.
So, Theorem~\ref{thm:nablapos} applies and after applying $\nabla$ and setting $q=1$ we obtain, up to a global sign, a Schur nonnegative symmetric function for any graph $\mathbf{g}$ and homogeneous character $\zeta_A$.
Hence, an analysis similar to what was done for the usual chromatic symmetric functions in Section~\ref{s:nabla} can be performed for any such $\Psi_{\zeta_A}(\mathbf{g})$.

We turn our attention to the polynomial $\phi_t \circ \Psi_{\zeta_A}(\mathbf{g})$ which we will denote by $\chi_{\mathbf{g}, \zeta_A}(t)$.
For any graph $\mathbf{g}$ let $\mathcal{F}(\mathbf{g})$ denote the collection of flats of $\mathbf{g}$ and let $a(\mathbf{g})$ denote the number of acyclic orientations of $\mathbf{g}$.
When $F$ is a subset of edges of a graph $\mathbf{g}$ with vertex set $V$ we let $\mathbf{g}|_{V,F}$ denote the graph on the same vertex set $V$ which has edge set $F$.
We also let $c(F)$ denote the number of connected components of $\mathbf{g}|_{V,F}$.
The antipode in $\overline{\mathcal{G}}$ has the formula
\begin{equation}
S(\mathbf{g}) = \sum_{F \in \mathcal{F}(\mathbf{g})} (-1)^{c(F)} a(\mathbf{g}/F) \mathbf{g}|_{V,F}
\label{eq:antipode}
\end{equation}
for any $\mathbf{g} \in \mathcal{G}$.
The formula is Equation~(\ref{eq:antipode}) is due to Humpert and Martin~\cite{Humpert-Martin}.
We use $\mathcal{F}(\mathbf{g},A)$ to denote the collection of all $F \in \mathcal{F}(\mathbf{g})$ such that every connected component of $\mathbf{g}|_{V,F}$ is in $A$.
Notice this means $F \in \mathcal{F}(\mathbf{g},A)$ if and only if $F \in \mathcal{F}(\mathbf{g})$ and the set partition of the vertex set of $\mathbf{g}$ given by the connected components of $\mathbf{g}|_{V,F}$ is an element of $\Pi_A(\mathbf{g})$.
In the case of a homogenous character, we now obtain a formula giving a combinatorial interpretation from $\chi_{\mathbf{g},\zeta_A}(-1)$ from the antipode.

\begin{proposition}
Let $\zeta_A$ be a character which is homogeneous of degree $d$.
Let $\mathbf{g}$ be a graph on $n = kd$ vertices, then
\[(-1)^k \chi_{\mathbf{g}, \zeta_A}(-1) = \sum_{F \in \mathcal{F}(\mathbf{g},A)} a(\mathbf{g}/F).\]
\label{prop:minus1}
\end{proposition}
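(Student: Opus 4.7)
The plan is to run the same reciprocity argument that appeared in the introduction for the classical chromatic polynomial, but with the character $\zeta_A$ in place of the edgeless character, and then use the hypothesis that $\zeta_A$ is homogeneous of degree $d$ to extract the sign.

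First I would recall that $\chi_{\mathbf{g},\zeta_A}(t) = \phi_t \circ \Psi_{\zeta_A}(\mathbf{g})$ and that the antipode of $\field[t]$ sends $p(t) \mapsto p(-t)$. Since $\Psi_{\zeta_A}$ and $\phi_t$ are both Hopf morphisms, they commute with antipodes, so
\[
\chi_{\mathbf{g},\zeta_A}(-1) \;=\; S\circ\phi_t\circ \Psi_{\zeta_A}(\mathbf{g})\Big|_{t=1} \;=\; \phi_t\circ \Psi_{\zeta_A}\circ S(\mathbf{g})\Big|_{t=1} \;=\; \zeta_A\circ S(\mathbf{g}),
\]
where the last equality uses the defining property $\phi_1\circ\Psi_{\zeta_A} = \zeta_A$.

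Next I would plug in the Humpert--Martin antipode formula~\eqref{eq:antipode} to get
\[
\chi_{\mathbf{g},\zeta_A}(-1) \;=\; \sum_{F\in \mathcal{F}(\mathbf{g})} (-1)^{c(F)}\, a(\mathbf{g}/F)\, \zeta_A\bigl(\mathbf{g}|_{V,F}\bigr).
\]
Because $\zeta_A$ is a character (hence multiplicative) and $\mathbf{g}|_{V,F}$ is the disjoint union of its connected components, $\zeta_A(\mathbf{g}|_{V,F})$ equals the product of $\zeta_A$ on the components. By the very definition of $\zeta_A$, this product is $1$ precisely when every connected component of $\mathbf{g}|_{V,F}$ belongs to $A$, i.e.\ when $F\in\mathcal{F}(\mathbf{g},A)$, and is $0$ otherwise. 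Therefore the sum collapses to $F\in\mathcal{F}(\mathbf{g},A)$.

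Finally, the homogeneity assumption does the remaining work. If $\zeta_A$ is homogeneous of degree $d$, then every connected component of $\mathbf{g}|_{V,F}$ with $F\in\mathcal{F}(\mathbf{g},A)$ has exactly $d$ vertices, so $c(F) = n/d = k$ is a constant on this index set. Pulling the common sign $(-1)^k$ out yields
\[
\chi_{\mathbf{g},\zeta_A}(-1) \;=\; (-1)^k \sum_{F\in\mathcal{F}(\mathbf{g},A)} a(\mathbf{g}/F),
\]
which is the claim. There is no genuine obstacle here: the argument is a direct concatenation of the CHA reciprocity principle, multiplicativity of $\zeta_A$, and the homogeneity hypothesis. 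The only point that requires a moment of care is the constancy of $c(F)$ on $\mathcal{F}(\mathbf{g},A)$, which is what makes the signs align into a single global factor $(-1)^k$ rather than an alternating sum.
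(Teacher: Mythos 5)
Your proposal is correct and follows essentially the same route as the paper: apply the antipode reciprocity $\chi_{\mathbf{g},\zeta_A}(-1)=\zeta_A\circ S(\mathbf{g})$, substitute the Humpert--Martin formula, use multiplicativity of $\zeta_A$ to restrict the sum to $\mathcal{F}(\mathbf{g},A)$, and observe that homogeneity forces $c(F)=k$ so the sign is constant. The only difference is that you spell out the justification of the reciprocity step (commutation of Hopf morphisms with antipodes), which the paper leaves implicit, having established it in the introduction for the classical case.
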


\begin{proof}
We first compute to find
\begin{align*}
\chi_{\mathbf{g}, \zeta_A}(-1) &= \zeta_A\left( \sum_{F \in \mathcal{F}(\mathbf{g})} (-1)^{c(F)} a(\mathbf{g}/F) \mathbf{g}|_F \right)\\
&= \sum_{F \in \mathcal{F}(\mathbf{g})} (-1)^{c(F)} a(\mathbf{g}/F) \zeta_A(\mathbf{g}|_F)\\
&= \sum_{F \in \mathcal{F}(\mathbf{g},A)} (-1)^{c(F)} a(\mathbf{g}/F).
\end{align*}
The proposition then follows since $c(F) = k$ for all $F \in \mathcal{F}(\mathbf{g},A)$.
\end{proof}

\begin{remark}~\label{rem:polynomial}
We have given a Hopf algebraic proof in Proposition~\ref{prop:minus1}.
From Lemma~\ref{lem:hom_contraction} it follows for a homogeneous character $\zeta_A$ that $\chi_{\mathbf{g}, \zeta_A}$ is the sum of chromatic polynomials of contractions of $\mathbf{g}$.
Hence, by a theorem of Stanley~\cite{Sta1973} we obtain an interpretation of $\chi_{\mathbf{g}, \zeta_A}(-m)$ for any positive integer $m$.
\end{remark}

As observed in Remark~\ref{rem:polynomial} we have
\begin{equation}
\chi_{\mathbf{g}, \zeta_A}(t) = \sum_{\pi \in \Pi_A(\mathbf{g})} \chi_{\mathbf{g}_{\downarrow\pi}, \zeta_A}(t)
\label{eq:polynomial}
\end{equation}
by Lemma~\ref{lem:hom_contraction}.
This is because substituting $x_i$ with $x_i^d$ in a monomial quasisymmetric function gives another monomial quasisymmetric indexed by an integer composition of the same length.
The image $\phi_t(M_{\alpha})$ of the Hopf morphism $\phi_t: QSym \to \field[t]$ only depends on $\ell(\alpha)$.
It then follows immediately from Proposition~\ref{prop:Whitney} that the coefficients of $\chi_{\mathbf{g}, \zeta_A}(t)$ alternate in sign and have an interpretation in terms of non-broken circuits.
Hence, we arrive at the following natural question.

\begin{question}
Let $\zeta_A$ be a homogenous character, do the absolute values of the coefficients $\chi_{\mathbf{g}, \zeta_A}(t)$ for a graph $\mathbf{g}$ form a unimodal, or even log-concave, sequence?
\end{question}

We now discuss how the symmetric functions coming from a character $\zeta_A$ fit into larger class of problems.
Here we will not need any homogeneity assumption and allow $A$ to be any collection of connected graphs.
A \emph{scheduling problem} on $n$ elements is a Boolean formula in the atomic formulas $(x_i \leq x_j)$ for $i,j \in [n]$.
A function $f$ from $[n]$ to the positive integers is a solution to the scheduling problem if the Boolean formula evaluates to true when $x_i = f(i)$.
In~\cite{sched} Breuer and Klivans studied quasisymmetric functions associated to a scheduling problem.
Given such a function $f$ we obtain a monomial $\mathbf{x}_f = \prod_{i \in [n]} x_{f(i)}$.
Let $\mathbf{S}$ be any scheduling problem on $n$ elements, the quasisymmetric function associated to $\mathbf{S}$ by Breuer and Klivans is then
\[\Phi(\mathbf{S}) = \sum_f \mathbf{x}_f\]
where the sum in taken over all solutions to the scheduling problem.
There is also an associated quasisymmetric function in noncommuting variables.
It can be advantageous to work with noncommuting variables.
For example, there is no deletion-contraction law for the usual chromatic symmetric; however, there is a deletion-contraction law for the chromatic symmetric in noncommuting varaiables~\cite{GS} as well as for many other scheduling problems~\cite{MachacekEJC}.
It is thus desirable to know that a problem fits into the context scheduling problems.
One then obtains connections to geometric enumeration  through Ehrhart theory (see~\cite[Theorem 3.2]{sched}), and in addition, one obtain the algebraic quasisymmetric function invariant
associated to the scheduling problem in both commuting as well as  noncommuting variables.

Usual graph coloring is one example of a scheduling problem, and in the case of graph coloring the associated quasisymmetric is the chromatic symmetric function.
For $\mathbf{g} \in \mathcal{G}_n$ let $E$ denote its edge set and let $x_i$ represent the color assigned to the vertex $i$.
The scheduling problem
\[\bigwedge_{\{i,j\} \in E} (x_i \neq x_j)\]
then corresponds to properly coloring $\mathbf{g}$.
Now for any collection of connected graphs $A$ we define the scheduling problem
\[\mathbf{S}(\mathbf{g},A) = \bigvee_{F \in \mathcal{F}(\mathbf{g},A)}\left( \left(\bigwedge_{\{i,j\} \in F} (x_i = x_j) \right)\wedge  \left(\bigwedge_{\{i,j\} \in E \setminus F} (x_i \neq x_j) \right) \right)\]
where $E$ denotes the edge set of $\mathbf{g}$.
The next proposition follows directly from the definition of the quasisymmetric function associated to a scheduling problem and the construction of $\mathbf{S}(\mathbf{g},A)$.

\begin{proposition}
If $A$ is any collection of connected graphs and $\mathbf{g}$ is any graph, then
\[\Phi(\mathbf{S}(\mathbf{g},A)) = \Psi_{\zeta_A}(\mathbf{g}).\]
\end{proposition}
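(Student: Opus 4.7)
The plan is to show that both sides of the identity are, explicitly, the same sum $\sum_f \mathbf{x}_f$ indexed by a common class of functions $f\colon [n]\to\mathbb Z_{>0}$. First I would unfold $\Psi_{\zeta_A}(\mathbf{g})$ using the general formula from Subsection~\ref{ss:CHA}: $\Psi_{\zeta_A}(\mathbf{g}) = \sum_{\alpha\vDash n} \zeta_{A,\alpha}(\mathbf{g})\, M_\alpha$, where $\zeta_{A,\alpha}(\mathbf{g})$ counts set compositions $(V_1,\dots,V_\ell)\vDash [n]$ with $|V_i|=\alpha_i$ and $\zeta_A(\mathbf{g}|_{V_i})=1$ for each $i$. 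Since $\zeta_A$ is an algebra map and the product in $\overline{\mathcal G}$ is disjoint union, $\zeta_A$ is multiplicative on connected components, so $\zeta_A(\mathbf{g}|_{V_i})=1$ if and only if every connected component of $\mathbf{g}|_{V_i}$ belongs to $A$. Expanding each $M_\alpha=\sum_{i_1<\cdots<i_\ell}x_{i_1}^{\alpha_1}\cdots x_{i_\ell}^{\alpha_\ell}$ and regrouping the resulting monomials by the associated function, I obtain
\[\Psi_{\zeta_A}(\mathbf{g}) \;=\; \sum_{f} \mathbf{x}_f,\]
where the sum is over all $f\colon[n]\to\mathbb Z_{>0}$ such that for every color $c$ in the image of $f$, every connected component of $\mathbf{g}|_{f^{-1}(c)}$ lies in $A$.

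Next I would analyze the solution set of $\mathbf{S}(\mathbf{g},A)$. Given $f$, let $F_f\subseteq E$ be the set of monochromatic edges of $f$. The inner clause $\phi_F$ attached to a particular $F$ in the disjunction is satisfied by $f$ if and only if $F$ agrees with $F_f$ on both $F\subseteq F_f$ and $E\setminus F\subseteq E\setminus F_f$, i.e.\ $F=F_f$. Thus the disjunction over $F\in\mathcal F(\mathbf{g},A)$ is satisfied by $f$ by \emph{exactly one} $F$, namely $F=F_f$, and $f$ solves $\mathbf{S}(\mathbf{g},A)$ if and only if $F_f\in\mathcal F(\mathbf{g},A)$. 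In particular there is no over-counting from the disjunction, and
\[\Phi(\mathbf{S}(\mathbf{g},A)) \;=\; \sum_{\substack{f\colon[n]\to\mathbb Z_{>0}\\ F_f\in\mathcal F(\mathbf{g},A)}} \mathbf{x}_f.\]

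It remains to match the two admissibility conditions. I would argue that $F_f$ is automatically a flat of the graphic matroid of $\mathbf{g}$: if $e=\{i,j\}\notin F_f$ then $f(i)\ne f(j)$, whereas every edge of $F_f$ joins vertices of a common color, so $i$ and $j$ lie in distinct connected components of $(V,F_f)$. Moreover, the connected components of $(V,F_f)$ are precisely the connected components of the induced subgraphs $\mathbf{g}|_{f^{-1}(c)}$ as $c$ ranges over the image of $f$, because within a single color class every edge of $\mathbf{g}$ is monochromatic (so present in $F_f$) while across color classes no edge of $F_f$ appears. Therefore $F_f\in\mathcal F(\mathbf{g},A)$ holds exactly when each component of each $\mathbf{g}|_{f^{-1}(c)}$ lies in $A$, matching the condition extracted in the first paragraph. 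The two sums agree term by term, proving the proposition. The only mildly delicate point is the automatic flatness of $F_f$ together with the observation that the disjunction is effectively a partition of solutions by $F_f$; both are immediate from the definitions once stated carefully.
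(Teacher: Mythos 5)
Your proof is correct, and it is essentially the argument the paper intends: the paper simply asserts that the proposition ``follows directly from the definition of the quasisymmetric function associated to a scheduling problem and the construction of $\mathbf{S}(\mathbf{g},A)$,'' and your write-up is precisely that unwinding --- expanding $\Psi_{\zeta_A}$ via the terminal-object formula into a sum of $\mathbf{x}_f$ over admissible colorings, identifying the solutions of $\mathbf{S}(\mathbf{g},A)$ as those $f$ with $F_f\in\mathcal{F}(\mathbf{g},A)$, and checking that the two admissibility conditions coincide. The observations you flag as delicate (that $F_f$ is automatically a flat, and that exactly one disjunct fires) are exactly the right points to make explicit, so your version is a complete proof of what the paper leaves to the reader.
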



%
%
%
%
%
%
%
%
%
%
%
%


\small
\bibliographystyle{abbrv}  
\bibliography{PermInvariant}

\end{document}